\theoremstyle{plain}
\newtheorem{thm}{Theorem}[subsection]
\newtheorem{lem}[thm]{Lemma}
\newtheorem{prop}[thm]{Proposition}
\theoremstyle{definition}
\newtheorem{dfn}[thm]{Definition}
\theoremstyle{remark}
\newtheorem{rem}[thm]{Remark}
\newtheorem{rems}[thm]{Remarks}
\theoremstyle{plain}
\newcommand{\RE}[1]{{{#1}}}
\newcommand{\RB}[1]{{#1}}
\newcommand{\Id}{{{\mathchoice {\rm 1\mskip-4mu l} {\rm 1\mskip-4mu l}
      {\rm 1\mskip-4.5mu l} {\rm 1\mskip-5mu l}}}}
\newcommand{\cobto}{\leadsto}
\newcommand{\R}{\mathbb{R}}
\newcommand{\Z}{\mathbb{Z}}
\newcommand{\C}{\mathbb{C}}
\newcommand{\La}{\Lambda}
\newcommand{\Crit}{\textnormal{Crit\/}}
\newcommand{\cob}{\mathcal{C}ob}
\newcommand{\fuk}{\mathcal{F}uk}
\newcommand{\mor}{{\textnormal{Mor\/}}}
\newcommand{\pbaddress}{biran@math.ethz.ch}
\newcommand{\ocaddress}{cornea@dms.umontreal.ca}
\begin{document}

\title[Lagrangian cobordism I.]{Lagrangian cobordism I.}
\date{\today}

\thanks{The second author was supported by an NSERC Discovery grant
  and a FQRNT Group Research grant}


\author{Paul Biran and Octav Cornea}

\address{Paul Biran, Department of Mathematics, ETH-Z\"{u}rich,
  R\"{a}mistrasse 101, 8092 Z\"{u}rich, Switzerland}
\email{\pbaddress} \address{Octav Cornea, Department of Mathematics
  and Statistics University of Montreal C.P. 6128 Succ.  Centre-Ville
  Montreal, QC H3C 3J7, Canada} \email{\ocaddress}

\bibliographystyle{alphanum}

%


\maketitle

%
%


\tableofcontents

\section{Introduction} \label{s:intro}
Embedded Lagrangian cobordism is a natural notion, initially introduced by
Arnold~\cite{Ar:cob-1, Ar:cob-2} at the beginnings of symplectic
topology. This notion was studied by Eliashberg~\cite{El:cob} and
Audin~\cite{Aud:calc-cob} who showed that, in full generality, this is
a very flexible notion that can be translated to purely algebraic
topological constraints. By contrast, the work of
Chekanov~\cite{Chek:cob} points out a certain form of rigidity valid
in the case of monotone cobordisms.

In this paper we will see that Floer theoretic tools lead to a further
understanding of cobordism. It turns out that, remarkably, Lagrangian
cobordism, in its monotone version, preserves Floer homology and all
similar invariants.

Moreover, Lagrangian cobordism can be structured as a category - there
are actually a number of ways to do this, in particular, one
introduced here as well as a different one introduced independently by
Nadler and Tanaka~\cite{Na-Ta}.

The behavior of the Floer-theoretic invariants with respect to
Lagrangian cobordism, as reflected in our results, translates into
properties of the morphisms in the cobordism category
in~\cite{Bi-Co:cob2}. This strongly suggests that this cobordism
category is related in a functorial way to an appropriate Fukaya
category, roughly in the way topological spaces are related to groups
via the (singular) homology functor.  This is indeed the case and in
the last section of the paper we review this categorical perspective.
The full proof of this functoriality is based on the techniques
introduced in this paper but is postponed to the forthcoming
paper~\cite{Bi-Co:cob2}.

\subsubsection*{Acknowledgments} The first author would like to thank
Dietmar Salamon and Ivan Smith for helpful discussions on Floer theory
and Fukaya categories. The second author thanks Mohammed Abouzaid,
Denis Auroux, Fran\c{c}ois Charette, Yasha Eliashberg, Paul Gauthier
and Cl\'ement Hyvrier for useful discussions as well as the MSRI for
its hospitality during the Fall of 2009, when the work presented here
was initiated. We also thank the referee for useful comments and in
particular for asking critical questions regarding the example
in~\S\ref{subsec:cob-lagr-non-iso} which led to the \RB{correction of an
earlier mistake in the Maslov} index calculation \RB{contained in} that
example.


\section{Main results}\label{sec:main-res}

Here we first fix the setting of the paper, in particular the
definition of Lagrangian cobordisms that we use.  We then list the
main results followed by a few comments.

\subsection{Setting.}
In this paper $(M^{2n},\omega)$ is a fixed connected symplectic
manifold. We assume that $M$ is compact but the constructions
described have immediate adaptations to the case when $M$ is only tame
(see~\cite{ALP}). Lagrangian submanifolds $L^n \subset M^{2n}$ will be
generally assumed to be closed unless otherwise indicated.

\subsubsection{Monotonicity} \label{sb:monotonicity} All families of
Lagrangian submanifolds in our constructions have to satisfy a
monotonicity condition in a uniform way as described below.  This is
crucial for the transversality issues involving bubbling of disks to
be approachable by the methods in \cite{Bi-Co:qrel-long} and
\cite{Bi-Co:rigidity}.

Given a Lagrangian submanifold $L \subset M$ there are two canonical
morphisms
$$\omega : \pi_{2}(M,L)\to \R \ , \ \mu:\pi_{2}(M,L)\to \Z$$
the first given by integration of $\omega$ and the second being the
Maslov index. The Lagrangian $L$ is \emph{monotone} if there exists a
positive constant $\rho>0$ so that for all $\alpha\in \pi_{2}(M,L)$ we
have $\omega(\alpha)=\rho\mu(\alpha)$. Unless otherwise specified we
will always assume in this paper that the minimal Maslov number
$$N_{L}: =\min\{\mu(\alpha) : \alpha\in \pi_{2}(M,L) \ ,\
\omega(\alpha)>0\}$$ satisfies $N_{L}\geq 2$.

In what follows we will use $\mathbb{Z}_2$ as the ground ring.
However, most of the discussion generalizes under additional
assumptions on the Lagrangians to arbitrary rings. We therefore denote
the ground ring by $K$, keeping in mind that in this paper $K =
\mathbb{Z}_2$.

To each connected closed, monotone Lagrangian $L$ there is an
associated basic Gromov- Witten type invariant $d_{L}\in K$ which is
the number (in $K$) of $J$-holomorphic disks of Maslov index $2$ going
through a generic point $P\in L$ for $J$ a generic almost complex
structure that is compatible with $\omega$. (Under different forms
this invariant has appeared in~\cite{Oh:HF1,Oh:HF1-add, Chek:cob,
  FO3:book-vol1}. It has also been used for instance
in~\cite{Bi-Co:rigidity}.)

A family of Lagrangian submanifolds $L_{i}$, $i\in I$, is called
\emph{uniformly monotone} if each $L_{i}$ is monotone and the
following condition is satisfied: there exists $d\in K$ so that for
all $i\in I$ we have $d_{L_{i}}=d$ and, if $d\not=0$, then there
exists a positive real constant $\rho$ so that the monotonicity
constant of $L_{i}$ equals $\rho$ for all $i\in I$.

In the absence of other indications, all the Lagrangians $L$ used in
the paper will be assumed monotone \RE{with $N_{L}\geq 2$} and,
similarly, the Lagrangian families will be assumed uniformly monotone.

To fix notation, for $d\in K$ and $\rho\in [0,\infty)$, we consider
the family $\mathcal{L}_{d}(M)$ formed by the closed, connected
Lagrangian submanifolds $L\subset M$ that are monotone with
monotonicity constant $\rho$ and with $d_{L}=d$.

\subsubsection{Cobordism: main definition.}
The plane $\mathbb{R}^2$ as well as domains in $\mathbb{R}^2$ will be
endowed with the symplectic structure $\omega_{\mathbb{R}^2} = dx
\wedge dy$, $(x,y) \in \mathbb{R}^2$.  We endow the product
$\mathbb{R}^2 \times M$ with the symplectic form
$\omega_{\mathbb{R}^2} \oplus \omega$. We denote by $\pi: \mathbb{R}^2
\times M \to \mathbb{R}^2$ the projection. For a subset $V \subset
\mathbb{R}^2 \times M$ and $S \subset \mathbb{R}^2$ we write $V|_{S} =
V \cap \pi^{-1}(S)$.

\begin{dfn}\label{def:Lcobordism} 
   Let $(L_{i})_{1\leq i\leq k_{-}}$ and $(L'_{j})_{1\leq j\leq
     k_{+}}$ be two families of closed, Lagrangian submanifolds of
   $M$. We say that that these two (ordered) families are Lagrangian
   cobordant, $(L_{i}) \simeq (L'_{j})$, if there exists a smooth
   compact cobordism $(V;\coprod_{i} L_{i}, \coprod_{j}L'_{j})$ and a
   Lagrangian embedding $V \subset ([0,1] \times \mathbb{R}) \times M$
   so that for some $\epsilon >0$ we have:
   \begin{equation} \label{eq:cob_ends}
      \begin{aligned}
         V|_{[0,\epsilon)\times \mathbb{R}} = & \coprod_{i}
         ([0, \epsilon) \times \{i\})  \times L_i \\
         V|_{(1-\epsilon, 1] \times \mathbb{R}} = & \coprod_{j} (
         (1-\epsilon,1]\times \{j\}) \times L'_j~.~
      \end{aligned}
   \end{equation}
   The manifold $V$ is called a Lagrangian cobordism from the
   Lagrangian family $(L'_{j})$ to the family $(L_{i})$. We will
   denote such a cobordism by $V:(L'_{j}) \cobto (L_{i})$ or $(V;
   (L_{i}), (L'_{j}))$.
\end{dfn}
\begin{figure}[htbp]
   \begin{center}
      \epsfig{file=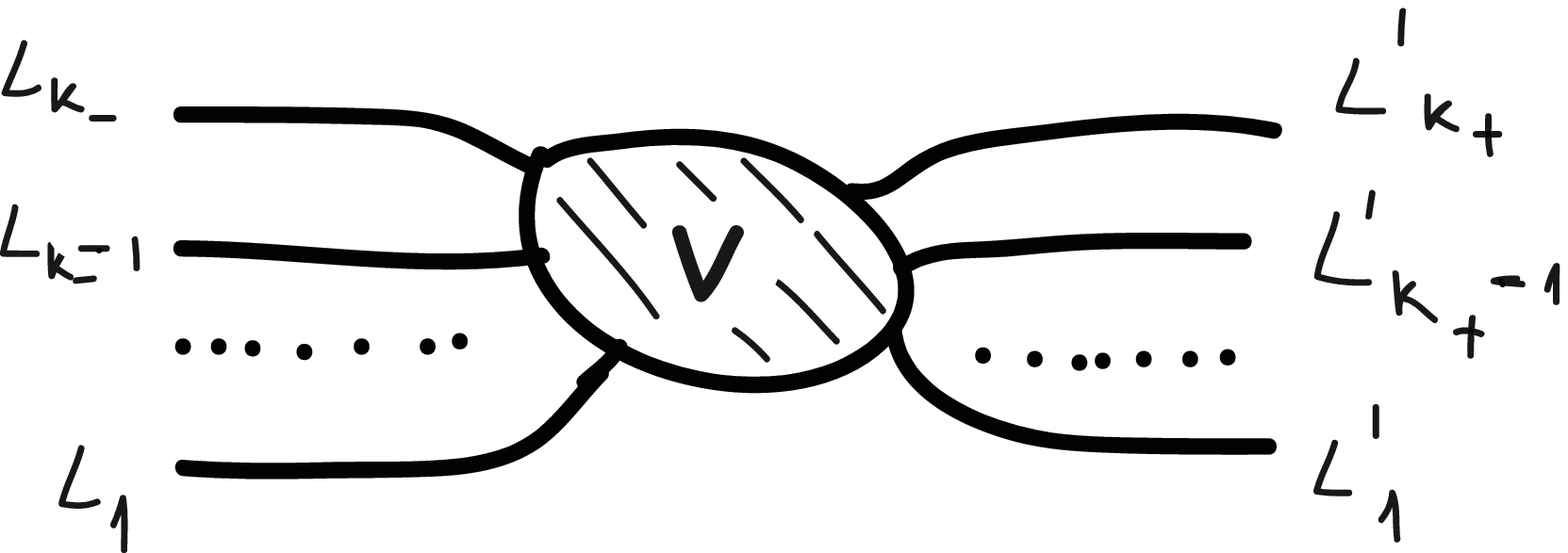, width=0.6\linewidth}
   \end{center}
   \caption{A cobordism $V:(L'_{j})\cobto (L_{i})$ projected
     on $\mathbb{R}^2$.}
\end{figure}

The Lagrangians in the family $(L_i)$ (or $(L'_j)$) are not assumed to
be mutually disjoint inside $M$. In this respect our setting is
somewhat different than in~\cite{Chek:cob}.  An {\em elementary}
cobordism is a cobordism (which might be connected or not) so that the
number of negative ends $k_{-}$ as well as the number of positive ends
$k_{+}$ in Definition~\ref{def:Lcobordism} both have value at most
one.  A cobordism is called {\em monotone} if
$$V\subset ([0,1]\times \mathbb{R})\times M$$ is a 
monotone Lagrangian submanifold.  As in the smooth case, there are
many other possible variants of cobordism depending on additional
structures (for instance, oriented, spin etc).

The definition above, as well as the notation, suggests the existence
of a category where morphisms are represented by cobordisms. This is
discussed in \S\ref{sec:category}.
 
\subsection{Statement of the results.}

Assuming monotonicity, the variant of Floer homology used in most of
the paper is defined over the universal Novikov ring $\mathcal{A}$
with base ring $\Z_{2}$. This homology is not graded.  We will also
make use of quantum homology which, unless otherwise indicated, is
graded and defined over the graded ring $\La$ of Laurent polynomials
in one variable.  We refer to \S \ref{sb:rev-HF} for a quick review of
both constructions and all the relevant notation.

\subsubsection{Floer homology exact sequences.}

\begin{thm}\label{cor:exact-tri-explicit}
   If $V: L \cobto (L_{1}, \ldots, L_{k})$ is a monotone cobordism and
   $N\subset M$ is another Lagrangian so that $L,L_{1},\ldots, L_{k},
   N$ are uniformly monotone, then there exists a sequence of chain
   complexes $K_{i}$ and a sequence of chain maps
   $$m_{i}:CF(N,L_{i};J) \longrightarrow K_{i}$$ so that $K_{i+1}$ 
   is the cone over the map $m_{i}$ (in the category of chain
   complexes), $K_{0}=0$ and there is a quasi-isomorphism $h: CF(N,L;J)
   \longrightarrow K_{k+1}$.  
   Each of these maps only depends on
   $V$ up to chain homotopy and product with some element of the form $T^{a}$ in
   $\mathcal{A}$. Here $J$ is a generic almost complex structure on $M$
    that is compatible with $\omega$.
\end{thm}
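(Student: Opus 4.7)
The plan is to realise the claimed cone decomposition as a geometric filtration on a secondary Floer complex computed inside $\mathbb{R}^2\times M$, using the cobordism $V$ together with Lagrangians of the form $\gamma\times N$ for suitably chosen profile curves $\gamma\subset\mathbb{R}^2$. The idea is that the multiple ends of $V$ force the Floer complex to split, end by end, into an iterated mapping cone, while a Hamiltonian isotopy of $\gamma$ to a curve meeting only the positive end produces the quasi-isomorphism with $CF(N,L)$.

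First I would set things up in $(\mathbb{R}^2\times M,\omega_{\mathbb{R}^2}\oplus\omega)$. Choose an almost complex structure $\tilde J$ on $\mathbb{R}^2\times M$ that coincides with the split structure $i\oplus J$ outside a large compact set, and fix two profile curves: let $\gamma_+\subset\mathbb{R}^2$ be a simple arc meeting $\pi(V)$ transversely at a single point on the positive end carrying $L$; let $\gamma_-\subset\mathbb{R}^2$ be a simple arc meeting $\pi(V)$ transversely in exactly $k$ points, once on each negative end at height $y=i$, in ascending order. Monotonicity of $V\subset[0,1]\times\mathbb{R}\times M$ together with uniform monotonicity of $\{L,L_1,\ldots,L_k,N\}$ lets us apply the monotone Floer machinery of~\cite{Bi-Co:qrel-long} to define $CF(V,\gamma_\pm\times N;\tilde J)$, with disc bubbling controlled by $N_V\geq 2$.

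The technical core is a single projection/maximum-principle argument. Because $\tilde J$ is split near infinity and along the cylindrical ends, every $\tilde J$-holomorphic strip with boundary on $V$ and $\gamma_\pm\times N$ projects to an honest $i$-holomorphic strip in $\mathbb{R}^2$ with boundary on $\pi(V)\cup\gamma_\pm$. The open mapping theorem, combined with the prescribed geometry of $\gamma_\pm$, then tightly constrains such a projection. Applied to $\gamma_+$, it collapses the projection to a neighbourhood of the single crossing, reducing the Floer equation to the ordinary one in $M$ and yielding
$CF(V,\gamma_+\times N;\tilde J)\simeq CF(L,N;J)$. Applied to $\gamma_-$, it forces the projected strip to travel only from a higher-labelled crossing to a lower-labelled one, so the filtration
\[F_i:=\mathrm{span}\{\text{generators coming from the ends }L_1,\ldots,L_i\},\qquad 0=F_0\subset F_1\subset\cdots\subset F_k\]
is preserved by the Floer differential, and the subquotient $F_i/F_{i-1}$ is identified with $CF(L_i,N;J)$ up to a Novikov shift $T^{a_i}\in\mathcal{A}$ coming from the enclosed symplectic area. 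Setting $K_i:=F_i$ and letting $m_i$ be the connecting map of the short exact sequence $0\to F_{i-1}\to F_i\to F_i/F_{i-1}\to 0$ then exhibits $K_i$ as the cone on $m_i$, giving the required tower. The quasi-isomorphism $h:CF(N,L;J)\to K_{k+1}$ is the continuation chain map for a generic Hamiltonian isotopy from $\gamma_+$ to $\gamma_-$, and chain-homotopy-invariance of the $m_i$ under changes of $\tilde J$, of perturbation data and of the profile curves follows from the standard one-parameter moduli arguments, with the factors $T^a$ absorbing the areas swept by such deformations.

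The principal obstacle is the combined compactness and projection analysis in the non-compact ambient space $\mathbb{R}^2\times M$. One must engineer $\tilde J$ and $\gamma_\pm$ so that simultaneously: (i) every $\tilde J$-holomorphic strip genuinely projects to an $i$-holomorphic strip in the base, so that the maximum principle can be invoked to enforce the filtration; (ii) no sequence of strips escapes along the cylindrical ends of $V$; and (iii) no disc or sphere bubble configurations appear that could violate the filtration or introduce uncontrolled Novikov factors. The uniform monotonicity hypothesis and the assumption $N_L\geq 2$ are exactly what make (iii) tractable, while (i) and (ii) require a careful choice of $\tilde J$ that is split not just at infinity but also along the rectangular cylindrical ends of $V$ inside $[0,1]\times\mathbb{R}\times M$.
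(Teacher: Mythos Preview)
Your overall strategy matches the paper's: cross $N$ with two profile curves $\gamma_+,\gamma_-\subset\mathbb{R}^2$, identify $CF(\gamma_+\times N,\overline V)$ with $CF(N,L)$, identify $CF(\gamma_-\times N,\overline V)$ with an iterated cone built from the $CF(N,L_i)$, and relate the two by a horizontal Hamiltonian isotopy of the profile curve. The cone tower is exactly the filtration $F_i$ you describe, and the uniqueness-up-to-$T^a$ comes from the moving-boundary comparison maps, just as you say.

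There is, however, a real gap in the key step. You assert that ``every $\tilde J$-holomorphic strip with boundary on $V$ and $\gamma_\pm\times N$ projects to an honest $i$-holomorphic strip in $\mathbb{R}^2$'' because $\tilde J$ is split near infinity and along the cylindrical ends. This is false and cannot be arranged: for transversality you must allow $\tilde J$ to be non-split over the body of $V$ (say over a compact box $B\subset\mathbb{R}^2$ containing the non-cylindrical part of $\pi(V)$), and any interesting Floer strip connecting two different ends will typically pass through $\pi^{-1}(B)$, where $\pi\circ u$ is \emph{not} $i$-holomorphic. So you cannot invoke the open mapping theorem for $\pi\circ u$ globally, and the upper-triangularity of the differential does not follow from what you have written. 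Your closing remark that one should make $\tilde J$ split ``along the rectangular cylindrical ends'' is correct but does not resolve this: the body of $V$ is still there.

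The paper's repair is to place the profile curve $\gamma_-$ so that it lies entirely \emph{outside} $B$ (it crosses each negative end just beyond the box, and passes above the box in between). Then $v=\pi\circ u$ is genuinely holomorphic in a neighbourhood of the boundary arc $v(\mathbb{R}\times\{0\})\subset\gamma_-$, and one argues directionally along that arc: if $\partial_s v(s_0,0)$ pointed the wrong way along $\gamma_-$, then the Cauchy--Riemann relation $\partial_t v(s_0,0)=i\,\partial_s v(s_0,0)$ (valid near this boundary) would push the image of $v$ into the region $\mathcal U$ above $\gamma_-$. Every component of $\mathcal U\setminus\bigcup_l\pi(\text{ends of }V)$ is unbounded and lies outside $B$, so $v$ is holomorphic there and the open mapping theorem applied only over these components forces the image of $v$ to be unbounded --- a contradiction. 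This yields monotone motion of $v(s,0)$ along $\gamma_-$ and hence the upper-triangular differential, without ever needing $v$ to be holomorphic over $B$.
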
   

Note that Theorem~\ref{cor:exact-tri-explicit} together with Example
d.  in~\S\ref{subsec:examples} (expanded
in~\S\ref{subsec:cob-lagr-non-iso}) imply the existence of exact
sequences associated to surgery. An exact sequence of Floer homologies
corresponding to Lagrangian surgery (of two Lagrangian submanifolds)
has been previously obtained in~\cite{FO3:book-chap-10} by other
methods. See also~\cite{Se:long-exact}.

By inspecting Definition \ref{def:Lcobordism} it is easy  to see that a cobordism
$$(V; (L_{1},\ldots, L_{k_{-}}), (L'_{1}, \ldots, L'_{k_{+}}))$$
with $k_{+}>1$ can be transformed by ``bending'' the positive ends to
the left into a cobordism $(V'; (L_{1},\ldots, L_{k_{-}},
L'_{k_{+}},\ldots, L'_{2}), L'_{1})$ so that one can apply Theorem
\ref{cor:exact-tri-explicit} to $V'$. Thus, even if the theorem
associates cone-decompositions only to cobordisms with a single
positive end there are in fact analogous results applying to arbitrary
cobordisms.
 
\subsubsection{Quantum homology restrictions} We recall that a
monotone Lagrangian $L$ is narrow if $QH(L)=0$ and it is wide if
$QH(L)\cong H(L;K)\otimes \La$ see~\cite{Bi-Co:rigidity}.

Notice that Theorem~\ref{cor:exact-tri-explicit} implies, in
particular, that if $(V; L, L')$ is a monotone elementary cobordism
and $N\subset M$ is any other Lagrangian submanifold so that $L,L',N$
are uniformly monotone, then $HF(N,L)$ is isomorphic to $HF(N,L')$.
Here is another homological rigidity result concerning elementary
cobordisms that holds this time over the graded ring $\La$.

\begin{thm}\label{thm:quantum_h} 
   If $(V;L,L')$ is a monotone cobordism with $L$ and $L'$ uniformly
   monotone, then $V$ is a quantum $h$-cobordism in the sense that
   $QH(V,L)=0=QH(V,L')$ and moreover $QH(L)$ and $QH(L')$ are
   isomorphic (via an isomorphism that depends on $[V]$) as rings. If
   additionally $L$ and $L'$ are wide, then the singular homology
   inclusions $H_{1}(L;\Z_{2})\to H_{1}(V;\Z_{2})$ and
   $H_{1}(L';\Z_{2})\to H_{1}(V;\Z_{2})$ have the same image. When
   $\dim(L)=2$, both these inclusions are injective and thus
   $H_{1}(L;\Z_{2})\cong H_{1}(L';\Z_{2})$.
\end{thm}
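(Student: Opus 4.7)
The strategy is to combine the Floer-level conclusion of Theorem \ref{cor:exact-tri-explicit} with a direct pearl-complex analysis of $V$ itself, viewed as a Lagrangian in $[0,1]\times\R\times M$, producing and comparing pearl complexes on $V$, $L$ and $L'$.

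\emph{Step 1 ($\La$-module iso $QH(L)\cong QH(L')$.)} Since $(V;L,L')$ is elementary (so $k=1$ in the notation of Theorem \ref{cor:exact-tri-explicit}), the theorem specializes, as noted just before the statement, to a quasi-isomorphism $CF(N,L;J)\simeq CF(N,L';J)$ for any $N$ uniformly monotone with $L,L'$. Taking $N=L$ and $N=L'$ and invoking the standard identifications $HF(L,L)\cong QH(L)$, $HF(L',L')\cong QH(L')$, one obtains a chain of $\La$-module isomorphisms
$$QH(L)\cong HF(L,L)\cong HF(L,L')\cong HF(L',L')\cong QH(L'),$$
well defined up to multiplication by $T^{a}$.

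\emph{Step 2 (Pearl complex on $V$; vanishing of $QH(V,L)$ and $QH(V,L')$; ring iso.)} I would construct a pearl complex $\mathcal{C}(V;F,J)$ on $V$ using a Morse function $F\colon V\to\R$ adapted to the cobordism structure: $F$ restricts to Morse functions $f_{L},f_{L'}$ on the two boundary components, with gradient inward-pointing along $L$ and outward along $L'$. Working with almost complex structures on $V\subset[0,1]\times\R\times M$ that are cylindrical near the ends, the usual compactness and transversality arguments for pearl trajectories apply. Restriction to the boundary provides two chain maps
$$r_{L}\colon \mathcal{C}(V;F,J)\to \mathcal{C}(L;f_{L},J),\qquad r_{L'}\colon \mathcal{C}(V;F,J)\to \mathcal{C}(L';f_{L'},J),$$
each a ring homomorphism with respect to the quantum product, as for the restriction map in the relative quantum homology of \cite{Bi-Co:qrel-long}. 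By definition $QH(V,L)$ and $QH(V,L')$ are the homologies of their mapping cones. To prove $QH(V,L)=0$ I would identify $r_{L}$ at the homology level with the Floer cobordism map of Step 1, both being counts of holomorphic disks in $V$ with the prescribed asymptotic behavior on $L$; since the latter is a quasi-isomorphism, so is $r_{L}$. The same argument gives $QH(V,L')=0$. Composing, $r_{L'}\circ r_{L}^{-1}\colon QH(L)\to QH(L')$ is the desired ring isomorphism.

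\emph{Step 3 ($H_{1}$ statements under wideness, and main obstacle.)} When $L,L'$ are wide, $QH(L)\cong H_{*}(L;\Z_{2})\otimes\La$ and similarly for $L'$. The Novikov-filtration spectral sequence on $\mathcal{C}(V;F,J)$ collapses at the classical page, and the vanishing $QH(V,L)=QH(V,L')=0$ forces the singular inclusions $H_{*}(L;\Z_{2})\to H_{*}(V;\Z_{2})$ and $H_{*}(L';\Z_{2})\to H_{*}(V;\Z_{2})$ to be surjective, hence to have common image $H_{*}(V;\Z_{2})$. When $\dim L=2$, $V$ is a compact $3$-manifold with $\partial V=L\sqcup L'$; the half-lives-half-dies consequence of Poincar\'e--Lefschetz duality over $\Z_{2}$ gives $\dim\ker\bigl(H_{1}(\partial V;\Z_{2})\to H_{1}(V;\Z_{2})\bigr)=\tfrac12\dim H_{1}(\partial V;\Z_{2})$; combining this with the equality of dimensions $\dim H_{1}(L;\Z_{2})=\dim H_{1}(L';\Z_{2})=\dim H_{1}(V;\Z_{2})$ from the previous step forces each individual inclusion to be injective, yielding $H_{1}(L;\Z_{2})\cong H_{1}(L';\Z_{2})$. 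The main obstacle in the whole argument is Step 2: setting up the pearl complex of $V$ in the non-compact ambient manifold $[0,1]\times\R\times M$ (controlling compactness, transversality and boundary behavior at the cylindrical ends) and matching its restriction map $r_{L}$ to the Floer cobordism map of Step 1 via a precise identification of the two moduli spaces of holomorphic curves. Once this compatibility is in place the remaining statements follow essentially formally.
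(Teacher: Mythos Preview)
Your proposal contains two genuine gaps, both in the steps you flag as the heart of the matter.

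\textbf{Step 2.} The ``restriction maps'' $r_L,r_{L'}:\mathcal{C}(V)\to\mathcal{C}(L),\mathcal{C}(L')$ do not exist as described. With $-\nabla F$ transverse to $\partial V$, the pearl complex of $(V,F)$ computes a relative quantum homology determined by which boundary components are exit regions, and the natural chain maps go the other way: when $L$ is the entry region one has an inclusion $\mathcal{C}(L)\hookrightarrow\mathcal{C}(V,F)$, not a restriction. More seriously, even after reformulating via inclusions, the vanishing $QH(V,L)=0$ does not follow from Theorem~\ref{cor:exact-tri-explicit}. That theorem gives $HF(N,L)\cong HF(N,L')$ for closed $N\subset M$; to relate this to $QH(V,L)$ one needs a PSS identification $QH(V,L;\mathcal{A})\cong HF(\overline{V}',\overline{V})$ for a suitable perturbation $\overline{V}'$ of $\overline{V}$ in $\mathbb{R}^2\times M$. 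The paper develops exactly this (Proposition~\ref{prop:PSS-cyl}) and then proves vanishing by a displacement: $\overline{V}'$ is horizontally isotopic to some $\overline{V}''$ disjoint from $\overline{V}$, so $HF(\overline{V}',\overline{V})=0$. A further algebraic input (Lemma~\ref{lem:alg-inclusion}) is required to descend from $\mathcal{A}$ to $\Lambda$. None of this is encoded in ``identify $r_L$ with the Floer cobordism map of Step~1''. The ring isomorphism is then obtained not through $QH(V)$ but dually, via the algebra maps $QH(V,L\cup L')\to QH(L)$ and $QH(V,L\cup L')\to QH(L')$.

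\textbf{Step 3.} The implication ``$QH(V,L)=0\Rightarrow H_\ast(L;\Z_2)\to H_\ast(V;\Z_2)$ is surjective'' is unjustified: the Novikov-filtration spectral sequence has $E_1=H_\ast(V,L;\Z_2)\otimes\Lambda$, and $QH(V,L)=0$ only says the higher differentials eventually kill everything, not that $H_\ast(V,L;\Z_2)=0$. The paper's argument is degree-specific and more delicate. Wideness of $L$ ensures that top-degree Morse cycles on $(V,L)$ and on $(V,L\cup L')$ are also pearl cycles, producing well-defined maps $H_n(V,L;\Z_2)\to QH(V,L)$ and $H_{n-1}(L;\Z_2)\hookrightarrow QH(L)$ that fit into a commutative square with the quantum long exact sequence; since $QH(V,L)=0$, the singular composition $H_n(V,L)\to H_{n-1}(L)$ vanishes, and by duality so does $H_1(L)\to H_1(V,L')$, giving equality of images. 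For $n=2$ the paper simply observes that this vanishing composition is the connecting homomorphism of the pair $(V,L)$, whence $H_1(L)\to H_1(V)$ is injective; no half-lives-half-dies is needed, and the dimension equality you invoke rests on the surjectivity claim that was not established.
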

As seen before, elementary cobordisms preserve Floer theoretic
invariants (up to isomorphism) and cobordisms with multiple ends also
satisfy some restrictions as indicated in Theorem
\ref{cor:exact-tri-explicit}.  Below is an example of a more explicit
obstruction to the existence of certain non-elementary cobordisms.
\begin{thm} \label{t:2-end-split} Assume that $(V; (L_{1},L_{2}), L)$
   is a monotone cobordism, that $L_{1},L_{2}, L$ are uniformly
   monotone and none of them is narrow. If $QH(L)$ is a division ring
   (i.e. each non-zero element in $QH(L)$ admits an inverse with
   respect to the quantum multiplication), then the inclusion
   $QH(L)\to QH(V)$ is injective. Moreover, for each $k$ we have the
   inequality:
   \begin{equation}\label{eq:rk}
      rk(QH_{k}(L))\leq |rk(QH_{k}(L_{1}))- rk(QH_{k}(L_{2}))|~.~
   \end{equation}
\end{thm}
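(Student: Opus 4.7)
My strategy is to apply Theorem~\ref{cor:exact-tri-explicit} to a bent form of the cobordism $V$, then exploit the (graded) division-ring structure of $QH(L)$ to extract both the injectivity and the rank inequality. First, I would bend one of the positive ends, as described in the paragraph following Theorem~\ref{cor:exact-tri-explicit}, to reduce $V:(L_1,L_2)\cobto L$ to an auxiliary cobordism $V'$ with a single positive end; for instance, bending $L_1$ yields $V'$ with negative ends $(L,L_1)$ and positive end $L_2$. Applying Theorem~\ref{cor:exact-tri-explicit} to $V'$ with a test Lagrangian $N$ produces an iterated cone decomposition of $CF(N,L_2;J)$ built from $CF(N,L;J)$ and $CF(N,L_1;J)$, and hence a three-periodic long exact sequence among the corresponding Floer homologies. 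Setting $N=L$, the identification $HF(L,L)\cong QH(L)\otimes_{\Lambda}\mathcal{A}$ puts $QH(L)$ directly into the LES.

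Under the division-ring hypothesis, $HF(L,L)$ is itself a field, so $HF(L,L_1)$ and $HF(L,L_2)$ are free modules over it and the connecting maps are $QH(L)$-linear. The key algebraic mechanism is that any module map out of a one-dimensional space over a division ring is either zero or injective (dually, any map into such a space is zero or surjective). For the injectivity $QH(L)\to QH(V)$, the cone decomposition exhibits $QH(L)$ as a summand of an object naturally mapping to $QH(V)$, and the division-ring property prevents this summand from being annihilated. For the rank inequality, I would lift the cone decomposition to a graded refinement over $\Lambda$, feasible because the $T^{a}$-ambiguity noted in Theorem~\ref{cor:exact-tri-explicit} is compatible with such a lift. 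The graded LES combined with the dichotomy above, applied to the connecting map with source in a graded piece of $QH(L)$, forces the sequence to collapse into a short exact sequence of one of the two forms
\[
0 \to QH_k(L_1) \to QH_k(L_2) \to QH_k(L) \to 0
\quad \text{or} \quad
0 \to QH_k(L) \to QH_k(L_1) \to QH_k(L_2) \to 0,
\]
which splits over the division ring. In either case one obtains $rk(QH_k(L)) = |rk(QH_k(L_1)) - rk(QH_k(L_2))|$, strengthening the stated inequality.

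The main obstacle is twofold. First, Theorem~\ref{cor:exact-tri-explicit} is stated in the ungraded Novikov setting and produces the relative Floer groups $HF(L,L_i)$ rather than the self-quantum homologies $QH(L_i)$ in the cone pieces; passing to the $QH(L_i)$ that appear in the statement requires either a graded quantum refinement of the cone decomposition, or running the argument with separate test Lagrangians $N\in\{L,L_1,L_2\}$ and stitching the resulting LES's together using the division-ring structure. Second, verifying that the $QH(L)$-module structure is honestly preserved by the connecting homomorphisms, so that the algebraic dichotomy above can be invoked, requires a careful check of the PSS-type quantum module action in the monotone setting, together with the hypothesis that none of $L,L_1,L_2$ is narrow---this is what ensures the relevant quantum homologies are nonzero and the module structures are nondegenerate.
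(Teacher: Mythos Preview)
Your approach has a genuine gap that is not a mere technicality. Theorem~\ref{cor:exact-tri-explicit} produces, for any test Lagrangian $N$, an exact triangle among $HF(N,L)$, $HF(N,L_1)$, $HF(N,L_2)$. Setting $N=L$ puts $QH(L;\mathcal{A})\cong HF(L,L)$ into the sequence, but the other two terms are then $HF(L,L_1)$ and $HF(L,L_2)$, \emph{not} $QH(L_1)$ and $QH(L_2)$. The rank inequality in the statement concerns $rk(QH_k(L_i))$, and there is no reason in general for $HF(L,L_i)$ to have the same rank as $QH(L_i)$. You correctly identify this as the ``main obstacle'', but neither of your proposed remedies works: a graded refinement of the cone still has terms $HF(L,L_i)$, and running the argument with $N\in\{L,L_1,L_2\}$ produces three different exact sequences, each containing exactly one of $QH(L)$, $QH(L_1)$, $QH(L_2)$, with no mechanism to stitch them into a single sequence containing all three. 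The short exact sequences you display simply do not arise from this construction with those terms. Likewise, the injectivity of $QH(L)\to QH(V)$ involves the quantum homology of the \emph{cobordism}, which Theorem~\ref{cor:exact-tri-explicit} does not touch.

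The paper proceeds along a different route: it works directly with the relative quantum homologies $QH(V,S)$ of the cobordism for various collections of ends $S$ (defined in~\S\ref{sbsb:PSS-cob}) and the long exact sequences of Lemma~\ref{lem:exact}, assembled into a commutative grid whose rows and columns genuinely contain $QH(L)$, $QH(L_1)$, $QH(L_2)$. The division-ring dichotomy you invoke is then applied to the inclusion-induced maps $i_j:QH(L)\to QH(V,L_j)$, after checking (via a PSS-type commutative square relating $i_1$ to a map $\phi_V:HF(L,L)\to HF(L,L_2)$) that these are $QH(L)$-module maps; exactness of the grid together with Poincar\'e-type duality then forces exactly one of $i_1,i_2$ to vanish and the other to be injective. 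Injectivity of $QH(L)\to QH(V)$ follows because $i_1$ factors through it. The rank inequality is obtained by comparing the images $I_j=\textnormal{image}(QH(L_j)\to QH(V))$ inside $QH(V)$ and showing (after the relabelling) that $I_0\oplus I_1\subset I_2$ with both $QH(L)\to QH(V)$ and $QH(L_1)\to QH(V)$ injective, which gives $rk(QH_k(L))+rk(QH_k(L_1))\leq rk(QH_k(L_2))$.
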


\begin{rem} \label{rem:Chek-d}In these two results the uniform monotonicity condition
   can be relaxed to just the requirement that the respective
   monotonicity constants be the same because in view of a result of
   Chekanov \cite{Chek:cob} if a cobordism $V: (L'_{1}, \ldots,
   L'_{k_{+}})\cobto (L_{1},\ldots, L_{k_{-}})$ is connected, then the
   numbers $d_{L_{i}}$, $d_{L'_{j}}$ are all the same.
\end{rem}

The inequality~\eqref{eq:rk} shows that often a Lagrangian with
$QH(L)$ a division ring can not be split in two non-narrow parts by a
Lagrangian cobordism. For instance, it is easy to see that the
inequality~\eqref{eq:rk} never holds in dimension $n=2$, showing that
such a splitting is not possible if $QH(L)$ is a division ring.  Note
that the assumption that $QH(L)$ is a division ring is not rare, at
least when working over the base field $\mathbb{Q}$ (which requires
additional general assumptions on the Lagrangians $L_1, L_2, L$). In
that case if $L$ is a $2$-torus then $QH(L)$ is division ring as soon
as its discriminant (see~\cite{Bi-Co:lagtop}) is not a perfect square.

The main idea for the proof of all three results above is that once a
sufficiently robust notion of Floer homology for Lagrangian cobordisms
is defined, one can deduce relations among the Floer homologies of the
ends of a cobordism out of (non-compactly supported) Hamiltonian
isotopies lifted from isotopies in the plane.

All the constructions involved in these results should extend over
$\Z$ in the presence of additional constraints on the Lagrangians.
There are also generalizations in the graded category along the lines
of \cite{Se:graded} but these extensions will be postponed to future
publications and will not be further discussed here.

\subsubsection{Examples}
The next result is based on analyzing Lagrangian surgery from the
point of view of cobordism.  \RE{ We will see that the trace of
  surgery is a Lagrangian cobordism and will discuss a few resulting
  examples.  In particular, we will prove that:}

\begin{thm}\label{thm:examples}
   There are examples of connected Lagrangians, monotone-cobordant
   \RE{with $N_{L}=1$}, that are not isotopic (even smoothly) and
   \RE{are not monotone-cobordant with $N_{L}=2$}.
\end{thm}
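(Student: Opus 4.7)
My plan is to construct the example explicitly via Lagrangian surgery, and then to rule out the existence of any $N_L\ge 2$ monotone cobordism between the resulting ends by invoking Theorem~\ref{cor:exact-tri-explicit}.

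First, I would appeal to the trace-of-surgery construction developed in example d and its expansion in \S\ref{subsec:cob-lagr-non-iso}: given two monotone Lagrangians $L_1,L_2 \subset M$ meeting transversely at a single point, the trace of the Polterovich surgery is a Lagrangian cobordism $V:(L_1\# L_2) \cobto (L_1, L_2)$, and its minimal Maslov number is exactly $1$, the obstruction being an explicit local ``neck disk'' of Maslov index $1$ intrinsic to the surgery model. To exhibit concrete $L_1, L_2$ for which the ends are smoothly distinct, I would work in a two-dimensional setting (for instance in $\mathbb{CP}^2$ or another standard monotone four-manifold) with two monotone Lagrangian tori arranged to intersect transversely at one point; then $L_1\# L_2$ is a closed orientable surface of genus two, hence not diffeomorphic to a torus and not diffeomorphic to the disjoint union of two tori. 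This rules out any smooth isotopy between the two ends of $V$, and, after bending one negative end upward to produce an elementary cobordism with one connected Lagrangian at each side, still rules out smooth isotopy between the resulting ends.

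To rule out a monotone cobordism $W$ of minimal Maslov $\ge 2$ between these ends, I would argue by contradiction. Given such a $W$, Theorem~\ref{cor:exact-tri-explicit}, applied to $W$ (possibly after further bending, so that $W$ has a single positive end), produces an iterated cone decomposition of $CF(N, L_1\# L_2)$ in terms of the $CF(N, L_i)$'s, for any test Lagrangian $N$ that is uniformly monotone with the ends. I would then choose $N$ so that $HF(N,L_1)$, $HF(N,L_2)$ and $HF(N,L_1\# L_2)$ are all explicitly computable, and so that the long exact sequences forced by the iterated cone decomposition are incompatible with these computations --- typically through a rank count. Natural candidates for $N$ are $L_1$ or $L_2$ themselves, or a nearby Hamiltonian perturbation, whose self-Floer homology in the chosen ambient manifold is known.

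The main obstacle is precisely this last step: picking the surgery data and the test Lagrangian $N$ so that the relevant Floer invariants can all be computed explicitly and so that the predicted cone decomposition is numerically impossible. The remaining ingredients --- constructing the surgery trace as a Lagrangian cobordism, verifying its monotonicity and the Maslov-$1$ neck disk calculation, and the smooth non-isotopy --- are either routine or standard once the Polterovich surgery model is in place.
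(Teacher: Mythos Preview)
Your proposal has a genuine gap at the very first step. The claim that the trace of a single Polterovich surgery carries an ``explicit local neck disk of Maslov index $1$ intrinsic to the surgery model'' is incorrect. The local model of the trace is the Lagrangian $\widehat{H}=H\cdot S^{n}\subset\mathbb{C}^{n+1}$ discussed in \S\ref{subsec:surgery-trace}, and there is no nontrivial Maslov-$1$ disk in this local picture. In fact the paper notes (Remark~\ref{r:multiple-surgery}~a) that when $L_{1},L_{2}$ are uniformly monotone and meet transversely in a \emph{single} point, a Seifert--Van Kampen argument shows that both $L_{1}\# L_{2}$ and the surgery cobordism are monotone \emph{with the same constants}; in particular the minimal Maslov number does not drop to $1$. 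So your proposed cobordism between Lagrangian tori in a monotone four-manifold would have $N_{V}\geq 2$, not $N_{V}=1$, and the example collapses.

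There is a second structural problem: the surgery trace you write down, $V:(L_{1}\#L_{2})\cobto(L_{1},L_{2})$, has one positive end and \emph{two} negative ends. Bending an end does not merge two distinct ends into one connected Lagrangian; it only moves an end from one side to the other. So you never obtain an elementary cobordism between two connected Lagrangians, which is what the theorem asks for.

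The paper's actual construction is quite different from what you sketch. It takes place in real dimension two: two circles $A,B$ in $\mathbb{C}$ meeting in \emph{two} points, and one performs the surgery at both points to obtain, by combining the two traces, an elementary cobordism $V:A\,\tilde{\#}\,B\cobto B\,\tilde{\#}\,A$ between two connected circles. The ambient manifold is then punctured at three well-chosen points so that (i) the two resulting circles are not even smoothly isotopic, and (ii) after adjusting the handles the cobordism becomes monotone. The Maslov-$1$ disks are a \emph{global} feature --- they arise from the planar region trapped between the two handle curves $\gamma_{1},\gamma_{2}$, not from any local neck. Finally, the obstruction to an $N_{V}\geq 2$ cobordism is obtained, as you anticipated, by choosing a test Lagrangian (a ray from the middle puncture) whose Floer homology distinguishes the two circles and then invoking Theorem~\ref{cor:exact-tri-explicit} with $k=1$.
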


\subsection{Comments on the definition of cobordism and some
  constructions}\label{subsec:examples}

In practice, particularly when studying one cobordism at a time, it is
often more convenient to view cobordisms as embedded in $\mathbb{R}^2
\times M$. Given a cobordism $V\subset ([0,1] \times \mathbb{R})
\times M$ as in Definition~\ref{def:Lcobordism} we can extend
trivially its negative ends towards $-\infty$ and its positive ends to
$+\infty$ thus getting a Lagrangian $\overline{V} \subset \mathbb{R}^2
\times M$. We will in general not distinguish between $V$ and
$\overline{V}$ but if this distinction is needed we will call
\begin{equation}\label{eq:R-extension}
   \overline{V} = \Bigl(\coprod_{i} (-\infty, 0] \times \{i\} 
   \times L_i \Bigr) \cup V \cup 
   \Bigl(\coprod_{j} [1,\infty) \times \{j\} 
   \times L'_j \Bigr)
\end{equation}
The $\mathbb{R}$-extension of $V$.

Here are a few examples of constructions of cobordisms.
\begin{itemize}
  \item[a.] If $L\subset M$ is a Lagrangian submanifold and $\gamma\in
   \C$ is any curve so that outside a compact set $\gamma$ agrees with
   $\mathbb{R} \times \{y\}$, then $\gamma \times L \in \widetilde{M}$
   is an elementary cobordism. If $L$ is monotone, then so is the
   cobordism $\gamma \times L$, with the same minimal Maslov number
   and monotonicity constant. More generally, a possibly non-connected
   curve $\gamma$ that coincides with $\coprod\R\times \{j\}$ outside
   a compact set gives rise to a cobordism $L\times \gamma$. In
   particular, this shows that the Lagrangian family $(L,L)$ is
   null-bordant.
  \item[b.] If the connected Lagrangians $L,L'\subset M$ are
   Hamiltonian isotopic it is easy to construct an elementary
   cobordism joining them using the Lagrangian suspension
   construction~\cite{ALP} (notice however that the projection of this
   cobordism on $\mathbb{R}^2$ will in general not be a curve).
  \item[c.] Let $(V; (L_{i}), (L'_{j}))$ be an {\em immersed}
   Lagrangian cobordism between two families of {\em embedded}
   Lagrangians. This is a cobordism as in
   Definition~\ref{def:Lcobordism} with the exception that $V\to
   ([0,1]\times \mathbb{R}) \times M$ is not a Lagrangian embedding
   but only a Lagrangian immersion. Such a cobordism can be
   transformed into an embedded one by first changing the self
   intersection points of $V$ into generic double points and then
   resolving these double points by Lagrangian surgery (see for
   instance~\cite{Po:surgery}).  It is important to note that by
   resolving these singularities various properties that the initial
   $V$ might have satisfied are in general lost. Monotonicity, for
   instance, is in general not preserved, nor is orientability.
   However, if we do not keep track of these additional structures we
   see that immersed Lagrangian cobordism implies embedded cobordism
   (as noticed by Chekanov~\cite{Chek:cob}).
  \item[d.] Finally, a less immediate verification shows that the
   trace of surgery is also a Lagrangian cobordism. In other words,
   given two transverse Lagrangians $L_{1}, L_{2}$ by applying surgery
   at each of their intersection points one can obtain (a possibly
   disconnected) Lagrangian $L$ that is cobordant to the family
   $(L_{1}, L_{2})$, the cobordism being given by the composition of
   the traces of the surgeries. We will elaborate more on this
   construction in~\S\ref{subsec:cob-lagr-non-iso}.
   \end{itemize}

\begin{rem} \label{rem:group-gen-cob}
   \begin{itemize}
     \item[i.]It is not difficult to see that cobordism is an
      equivalence relation among Lagrangian families: reflexivity is
      of course obvious as well as transitivity. For symmetry a little
      argument is required.  Assume $V$ is a cobordism between
      $(L_{1}, L_{2},\ldots L_{h})$ and $(L_1', \ldots, L_k')$. The
      transformation $a:\C\times M \to \C\times M$ given by
      $a(z,m)=(-z,m)$ is symplectic and, after adjusting the ends of
      the cobordism $a(V)$, it provides a cobordism from
      $(L'_{k},\ldots, L'_{1})$ to $(L_{h},\ldots, L_{1})$. This
      cobordism can be easily adjusted at the ends to an immersed
      cobordism between $(L_1', \ldots, L_k')$ and $(L_{1},
      L_{2},\ldots, L_{h})$. By point c. above this can be transformed
      into an embedded Lagrangian cobordism. (Note that this
      construction fails for monotone cobordisms, hence being monotone
      cobordant does not seem to be an equivalence relation \RB{for families}.)
     \item[ii.] Given two Lagrangian families
      $\mathcal{L}=(L_{1},\ldots, L_{h})$ and
      $\mathcal{L}'=(L'_{1},\ldots, L'_{k})$ define their sum
      $\mathcal{L}+\mathcal{L'}=(L_{1},\ldots, L_{h}, L'_{1},\ldots,
      L'_{k})$. In view of the properties described above it is easy
      to see that this operation defines a group structure on the set
      of equivalence classes of Lagrangian families of $M$. By
      applying appropriate surgeries it is easy to see that this group
      is commutative. (In contrast, there is no apriori reason why
      $\mathcal{L} + \mathcal{L}'$ should be monotone cobordant to
      $\mathcal{L}' + \mathcal{L}$.)
     \item[iii.] It is easy to see that elementary cobordism is also
      an equivalence relation among the Lagrangians of $M$ (surgery is
      not needed for this argument).
     \item[iv.] Special elementary cobordism of any of the three
      following types - monotone, oriented, or spin - is an
      equivalence relation. Again reflexivity is obvious and symmetry
      follows as in Remark~\ref{rem:group-gen-cob} i. without any need
      to perform surgeries. Transitivity is obvious too in the
      orientable and spin cases. In the monotone case, it follows from
      the Van Kampen theorem for relative $\pi_{2}(-,-)$'s viewed as
      cross-modules (see~\cite{Brown-Higgins:rel-homotopy}) that
      gluing two monotone cobordisms with the same monotonicity
      constant along a connected monotone end produces a monotone
      cobordism. However, as already mentioned earlier, non-elementary
      monotone cobordism is not necessarily an equivalence relation.
      \RB{\item[v.] As noted in Remark \ref{rem:Chek-d},} it follows from an observation of Chekanov in
        \cite{Chek:cob} that if $V:(L_{1},\ldots, L_{k})\to
        (L'_{1},\ldots L'_{s})$ is a connected, monotone cobordism
        (with $N_{L}\geq 2$), then $d_{L_{i}}=d_{L'_{k}}=d_{V}$ for
        all $i,k$.  As noted in \cite{Chek:cob}, this implies for
        instance that the Clifford and Chekanov tori in $\C^{2}$ are
        not monotone cobordant (with $N_{L}\geq 2$).
   \end{itemize}
\end{rem}


\section{A quick review of Lagrangian Floer theory}
\label{sb:rev-HF} 
This section recalls briefly the basic definitions and notational
conventions for Floer homology and Lagrangian quantum homology in the
standard case of closed Lagrangian submanifolds. As such it can be
safely skipped by experts.  We refer the reader to~\cite{Oh:HF1,
  Oh:HF1-add, Oh:spectral} for the foundations of Floer homology for
monotone Lagrangians, and to~\cite{FO3:book-vol1, FO3:book-vol2} for
the general case. \RE{For details on the variant of Lagrangian quantum
  homology used here} see~\cite{Bi-Co:qrel-long, Bi-Co:rigidity,
  Bi-Co:Yasha-fest, Bi-Co:lagtop}.

\subsection{Lagrangian Floer homology} \label{sbsb:lag-hf} Let $L_0,
L_1 \subset M$ be two monotone Lagrangian submanifolds with $d_{L_0} =
d_{L_1} = d$. In case $d \neq 0$ we assume in addition that $L_0$ and
$L_1$ have the same monotonicity constant (or in other words that the
pair $(L_0, L_1)$ is uniformly monotone). \RE{In case the ground ring
  is not $2$-torsion we also assume that $L_{0}$, $L_{1}$ are spin
  with fixed spins structures.}

Denote by $\mathcal{A}$ the universal Novikov ring, i.e. $$\mathcal{A}
= \Bigl\{ \sum_{k=0}^{\infty} a_k T^{\lambda_k} \mid a_k \in K, \;
\lim_{k \to \infty} \lambda_k = \infty \Bigr\},$$ endowed with the
obvious multiplication. We do not grade $\mathcal{A}$.

Denote by $\mathcal{P}(L_0, L_1) = \{ \gamma \in C^0([0,1],M) \mid
\gamma(0) \in L_0, \gamma(1) \in L_1\}$ the space of paths in $M$
connecting $L_0$ to $L_1$. For  $\eta \in \pi_0(\mathcal{P}(L_0,
L_1))$ we denote the path connected component of $\eta$ by
$\mathcal{P}_{\eta}(L_0,L_1)$. 

Fix $\eta \in \pi_0(\mathcal{P}(L_0, L_1))$ and let $H: M \times [0,1]
\to \mathbb{R}$ be a Hamiltonian function with Hamiltonian flow
$\psi_t^H$. We assume that $\psi_1^H(L_0)$ is transverse to $L_1$. (We
generally view $H$ as a mean of possible perturbation of $L_0$, and
when not needed we will often use $H=0$.)  We denote by
$\mathcal{O}_{\eta}(H)$ the set of paths $\gamma \in
\mathcal{P}_{\eta}(L_0, L_1)$ which are orbits of the flow $\psi^H_t$.
Finally, we choose also a generic $1$-parametric family of almost
complex structures $\mathbf{J} = \{J_t\}_{t \in [0,1]}$ compatible
with $\omega$.

Using this data one can define in a standard way the Floer complex
$CF(L_0, L_1; \eta; H, \mathbf{J})$ with coefficients in
$\mathcal{A}$. Recall that the underlying module of this complex is
generated over $\mathcal{A}$ by the elements of
$\mathcal{O}_{\eta}(H)$. The Floer differential $\partial: CF(L_0,
L_1; \eta; H, \mathbf{J}) \longrightarrow CF(L_0, L_1; \eta; H,
\mathbf{J})$ is defined as follows. For a generator $\gamma_{-} \in
\mathcal{O}_{\eta}(H)$ define
$$\partial(\gamma_{-}) = 
\sum_{\scriptscriptstyle \gamma_+ \in \mathcal{O}_{\eta}(H)}
\sum_{\scriptscriptstyle u \in \mathcal{M}_0(\gamma_{-}, \gamma_{+};H,
  \mathbf{J})} \varepsilon(u)T^{\omega(u)} \gamma_{+}.$$ Here
$\mathcal{M}_0(\gamma_{-}, \gamma_{+}; H, \mathbf{J})$ stands for the
$0$-dimensional components of the space of Floer strips
$u:\mathbb{R}\times [0,1] \longrightarrow M$ connecting $\gamma_{-}$
to $\gamma_{+}$, modulo the $\mathbb{R}$-action coming from
translation in the $\R$ coordinate. The strips $u$ are assumed to have
finite energy and we denote by $\omega(u) = \int_{\mathbb{R} \times
  [0,1]} u^* \omega$ the symplectic area of $u$.  Finally, each such
strip $u$ comes with a sign $\varepsilon(u) = \pm 1 \in K$. As
mentioned before, in this paper we will mostly work with $K =
\mathbb{Z}_2$ hence the \RE{signs} $\varepsilon(u)$ are irrelevant.
Under the preceding assumptions on $L_0, L_1$ we have $\partial^2 = 0$
hence one can define the homology $$HF(L_0, L_1; \eta; H, \mathbf{J})
= \ker (\partial) / \textnormal{image\,} (\partial) \,.$$

\begin{rem}\label{rem:non-grading}
   In the general context of the paper, with $CF$ defined over
   $\mathcal{A}$, the chain complex $CF$ is not graded and hence $HF$
   has no grading too. In special situations one can endow $CF$ with
   some grading though not always over $\mathbb{Z}$ (e.g. when $L_0$
   and $L_1$ are both oriented, then there is a
   $\mathbb{Z}_2$-grading).  See~\cite{Se:graded} for a systematic
   approach to these grading issues. 
\end{rem}

Standard arguments show that the homology $HF(L_0, L_1; \eta; H,
\mathbf{J})$ is independent of the additional structures $H$ and
$\mathbf{J}$ up to canonical isomorphisms. We will therefore omit $H$
and $\mathbf{J}$ from the notation. 

We will often consider all components $\eta \in
\pi_0(\mathcal{P}(L_0,L_1))$ together i.e. take the direct sum complex
\begin{equation} \label{eq:cf-all-eta} CF(L_0,L_1;H,\mathbf{J}) =
   \bigoplus_{\eta} CF(L_0,L_1;\eta; H,\mathbf{J})
\end{equation}
with total homology which we denote $HF(L_0,L_1)$. There is an obvious
inclusion map $i_{\eta}: HF(L_0,L_1;\eta) \longrightarrow
HF(L_0,L_1)$.

\begin{rems} \label{r:auton-J} 
   \begin{enumerate}
     \item[i.] When $L_0$ and $L_1$ are mutually transverse we can take
      $H=0$ in $CF(L_0, L_1; H, \mathbf{J})$ in which case the
      generators of the complex are the intersection points $L_0 \cap
      L_1$ and Floer trajectories $\mathcal{M}_0(\gamma_{-},
      \gamma_{+};0,\mathbf{J})$ are genuine holomorphic strips
      connecting intersection points $\gamma_{-}, \gamma_{+} \in L_0
      \cap L_1$.  When $H=0$ we will omit it from the notation and
      just write $CF(L_0, L_1; \mathbf{J})$. We will sometimes omit
      $\mathbf{J}$ too when its choice is obvious.
     \item[ii.] The use of families of almost complex structures
      $\mathbf{J} = \{J_t\}_{t \in [0,1]}$ is needed for
      transversality reasons typically occurring in the construction
      of Floer homology. However, it is still possible to work with
      almost complex structure $J$ that do not depend on $t$, provided
      the Hamiltonian $H$ is chosen to be generic
      (see~\cite{Fl-Ho-Sa:transversality}).
   \end{enumerate}
\end{rems}

\subsection{Moving boundary conditions}
\label{subsubsec:movingbdry}

As before assume that $L_{0}$ and $L_{1}$ are two transverse
Lagrangians. Fix the component $\eta$ and the almost complex structure
$\mathbf{J}$. We also fix once and for all a path $\gamma_{0}$ in the
component $\eta$. Now let $\varphi = \{\varphi_t\}_{t \in [0,1]}$ be a
Hamiltonian isotopy starting at $\varphi_0 = \Id$. The isotopy
$\varphi$ induces a map $$\varphi_* : \pi_0(\mathcal{P}(L_0, L_1))
\longrightarrow \pi_0(\mathcal{P}(L_0, \varphi_1(L_1)))$$ as follows.
If $\eta \in \pi_0(\mathcal{P}(L_0, L_1))$ is represented by
$\gamma:[0,1] \to M$ then $\varphi_* \eta$ is defined to be the
connected component of the path $t \mapsto \varphi_t(\gamma(t))$ in
$\mathcal{P}(L_0, \varphi_1(L_1))$.

The isotopy $\varphi$ induces a canonical isomorphism
\begin{equation} \label{eq:iso-c-phi}
   c_{\varphi}: HF(L_0, L_1; \eta)
   \longrightarrow HF(L_0, \varphi_1(L_1); \varphi_* \eta)
\end{equation}
which comes from a chain level map defined using moving
boundary conditions (see e.g.~\cite{Oh:HF1}). The isomorphism
$c_{\varphi}$ depends only on the homotopy class (with fixed end
points) of the isotopy $\varphi$.

The definition of the isomorphism $c_{\varphi}$ involves some
subtleties due to our use of the universal Novikov ring $\mathcal{A}$
as base ring: given that the symplectic area of the strips with moving
boundaries can vary inside a one parametric moduli space it follows
that the naive definition of the morphism $c_{\varphi}$ - so that each
strip is counted with a weight given by its symplectic area - does not
provide a chain map.  We explain here in more detail the construction
of the map $c_{\varphi}$.

Let $\varphi = \{ \varphi^{H}_{t}\}$ be a Hamiltonian \RE{diffeomorphism}
generated by $H$. Denote $L'_1 = \varphi^H_{1}(L_{1})$ and assume that
$L'_1$ is also transverse to $L$ and that the Floer complexes
$C_{1}=CF(L_{0}, L_{1};\eta; 0; \mathbf{J})$ and $C_{2}=CF(L_{0},
L'_1;\varphi_{\ast}\eta; 0;\mathbf{J})$ are well-defined.

Put $\psi_{t}=(\varphi_{t}^{H})^{-1}$. We define the functional
$\Theta_H: \mathcal{P}_{\varphi_{\ast}\eta}(L_{0},L'_1)\to \R$,
$$\Theta_H(\gamma)=\int_{0}^{1}H(\psi_t(\gamma(t))dt
-\int_{0}^{1}H(\gamma_{0}(t))dt~.~$$

Let $\beta:\R\to [0,1]$ be a smooth function so that $\beta(s)=0$ for
$s\leq 0$, $\beta(s)=1$ for $s\geq 1$ and $\beta$ is strictly
increasing on $(0,1)$. Let $x$ be a generator of $C_{1}$. We write
\begin{equation}\label{eq:moving-brdy-sum}
   \tilde{c}_{\varphi}(x)=\sum_{y} \Bigl(\sum_{u} 
   T^{\omega(v_{u})-\Theta_H(y)}\Bigr)y
\end{equation}
with $y$ going over the generators of $C_{2}$ and $u$ going over all
the elements of a zero dimensional moduli space of solutions to
Floer's homogeneous equation $\overline{\partial}_{\mathbf{J}}u=0$
that start at $x$ and end at $y$ and satisfy the boundary conditions
$$u(s,0)\subset L_{0} \ , \ u(s,1)\subset \varphi_{\beta(s)}(L_{1})~.~$$
Here the element $v_{u}:\R\times [0,1]\to M$ is defined by the formula
$v_{u}(s,t)=\psi_{t\beta(s)}u(s,t)$ so that $v_{u}(s,t)$ is a strip
with boundary conditions on $L_{0}$ and $L_{1}$.  It is easy to check
that with this definition $\tilde{c}_{\varphi}$ is a chain map.  Note
that the quantity $|\omega(v_{u}) -\omega(u)|$ is bounded by the
variation of $H$ so that $\tilde{c}_{\varphi}$ is well defined over
$\mathcal{A}$. Further, the map $c_{\varphi}$ induced in homology by
$\tilde{c}_{\varphi}$, depends only on the homotopy class with fixed
end points of $\varphi$. Similar constructions can be used to adapt
the rest of the usual Floer theoretic machinery to this moving
boundary situation. They show in particular that $c_{\varphi}$ induces
an isomorphism in homology.
 
\begin{rem} This argument also applies without modification to cases
   when $M$ is not compact (but e.g. tame), if we have some control
   which insures that all solutions $u$ of finite energy as above have
   their image inside a fixed compact set $K \subset M$.
\end{rem}

\subsection{The pearl complex and Lagrangian quantum homology}
\label{sbsb:lag-qh} Next we briefly describe the version of Lagrangian
quantum homology that will be used later in the paper. This is a
version of the self Floer homology of a Lagrangian submanifold. The
identification between the two homologies can be done via a
Piunikin-Salamon-Schwarz type quasi-isomorphism. While Lagrangian
Floer homology has been developed in great generality
in~\cite{FO3:book-vol1, FO3:book-vol2}, this version - \RB{specific to the monotone setting} - has been
suggested by Oh~\cite{Oh:relative}, following an
idea of Fukaya, under the name of {\em relative quantum (co)homology}.
The theory was later implemented and further developed in our previous
work~\cite{Bi-Co:qrel-long, Bi-Co:rigidity, Bi-Co:Yasha-fest,
  Bi-Co:lagtop} to which we \RB{refer, in particular, for technical details}.
This formalism is particularly
efficient in applications and this is why we use it here. We use the
name {\em Lagrangian quantum homology} to avoid confusion with the
relative quantum invariants in the sense of
Ionel-Parker~\cite{IoPa:relGW}.

Let $L \subset M$ be a monotone Lagrangian with minimal Maslov number
$N_L\geq 2$. Denote by $\Lambda = K[t^{-1}, t]$ the ring of Laurent
polynomials in $t$, graded so that $|t| = -N_L$. (In case $L$ is
weakly exact, i.e.  $\omega(A)=0$ for every $A \in \pi_2(M,L)$ we put
$\Lambda = K$.) The chain complex used to define the Lagrangian
quantum homology $QH(L)$ is denoted by $\mathcal{C}(\mathscr{D})$ and
called the pearl complex.  It is associated to a triple of auxiliary
structures $\mathscr{D} = (f, (\cdot, \cdot), J)$ where $f: L
\longrightarrow \mathbb{R}$ is a Morse function on $L$, $(\cdot,
\cdot)$ is a Riemannian metric on $L$ and $J$ is an
$\omega$-compatible almost complex structure on $M$.  With these
structures fixed we have
$$\mathcal{C}(\mathscr{D}) = K \langle \textnormal{Crit}(f) \rangle
\otimes \Lambda.$$ This complex is $\mathbb{Z}$-graded with grading
combined from both factors. The grading on the left factor is defined
by Morse indices of the critical points.  The differential $d$ on this
complex is defined by counting so called pearly trajectories. 
The homology $H_*(\mathcal{C}(\mathscr{D}),d)$
is independent of $\mathscr{D}$ (up to canonical isomorphisms) and is
denoted by $QH_*(L)$. Note that this homology is $\mathbb{Z}$-graded
\RE{and $N_{L}$ periodic}.

In what follows we will actually need also to enrich the coefficients
of $QH(L)$ to the Novikov ring $\mathcal{A}$. This is done as follows.
Denote by 
\begin{equation}\label{eq:min-area}
A_L = \min \{\omega(A) \mid A \in \pi_2(M,L), \;
\omega(A)>0 \}
\end{equation}
the minimal positive area of a disk with boundary on
$L$. We use the convention that $\min \emptyset = \infty$.  The
Novikov ring $\mathcal{A}$ becomes an algebra over $\Lambda$ via the
ring morphism induced by $\Lambda \ni t \mapsto T^{A_L} \in
\mathcal{A}$. (If $L$ is weakly exact we have $\Lambda = K$ and we
view $\mathcal{A}$ as an algebra over $\Lambda$ in the usual way.)
Consider now $$\mathcal{C}(\mathscr{D};\mathcal{A}) =
\mathcal{C}(\mathscr{D}) \otimes_{\Lambda} \mathcal{A}, \quad
d_{\mathcal{A}} = d \otimes_{\Lambda} \textnormal{id}.$$ The homology
of this complex will be denoted by $QH(L;\mathcal{A})$. Denote by $j_{\mathcal{A}}:\mathcal{C}(\mathscr{D})\to \mathcal{C}(\mathscr{D};\mathcal{A})$ the inclusion.  In
contrast to $\mathcal{C}(\mathscr{D})$ and $QH(L)$, their analogues
over $\mathcal{A}$, $\mathcal{C}(\mathscr{D};\mathcal{A})$ and
$QH(L;\mathcal{A})$ are not graded.

To avoid confusion between $\Lambda$ and $\mathcal{A}$ we will
sometimes write $QH(L;\Lambda)$ for $QH(L)$.

The following simple algebraic remark will be useful later in the
paper.
  
\begin{lem}\label{lem:alg-inclusion}
   Suppose that $\mathcal{C}$ is a free $\La$-chain complex and let
   $\mathcal{C}'=\mathcal{C}\otimes_{\La}\mathcal{A}$. The map in
   homology $H(\mathcal{C})\to H(\mathcal{C}')$ induced by
   $j_{\mathcal{A}}:\mathcal{C}\to \mathcal{C}'$ is injective. In
   particular, the change of coefficients $QH(- ;\La)\to QH( -
   ;\mathcal{A})$ is injective.
\end{lem}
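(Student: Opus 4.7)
The strategy is to use the grading on $\La$ to show that $H(\mathcal{C})$ is a free $\La$-module, after which injectivity will follow from the flatness of $\mathcal{A}$ over $\La$ together with the injectivity of the inclusion $\La \hookrightarrow \mathcal{A}$.

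First I would verify that $\mathcal{A}$ is flat as a $\La$-module. Since $\La = K[t^{-1},t]$ is a PID, it suffices to check torsion-freeness. But $\mathcal{A}$ is itself an integral domain (a leading-exponent argument shows it has no zero divisors) and $\La$ embeds into $\mathcal{A}$ via $t \mapsto T^{A_L}$, so $\mathcal{A}$ is torsion-free over $\La$, hence flat. Flatness then gives a natural isomorphism $H(\mathcal{C}) \otimes_\La \mathcal{A} \cong H(\mathcal{C}')$, under which the map induced by $j_\mathcal{A}$ becomes $h \mapsto h \otimes 1$.

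Next I would show, using the grading, that $H(\mathcal{C})$ is free over $\La$. Because $t \in \La$ is a unit of degree $-N_L$, multiplication by $t$ induces, for every $k$, a $K$-linear isomorphism $H_k(\mathcal{C}) \xrightarrow{\sim} H_{k-N_L}(\mathcal{C})$. Choosing a homogeneous $K$-basis $\{h_\alpha\}$ of the ``fundamental period'' $\bar H := \bigoplus_{k=0}^{N_L-1} H_k(\mathcal{C})$, this $N_L$-periodicity implies that every element of $H(\mathcal{C})$ is uniquely a finite $\La$-linear combination of the $h_\alpha$. Thus $\{h_\alpha\}$ is a $\La$-basis of $H(\mathcal{C})$, and $H(\mathcal{C}) \otimes_\La \mathcal{A} \cong \bar H \otimes_K \mathcal{A}$. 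Combining the two steps, the map $H(\mathcal{C}) \to H(\mathcal{C}')$ identifies with $\id_{\bar H} \otimes j : \bar H \otimes_K \La \to \bar H \otimes_K \mathcal{A}$, where $j : \La \hookrightarrow \mathcal{A}$ is the ring inclusion, and is injective since $j$ is and tensoring with a $K$-vector space preserves injectivity.

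The main conceptual point, and what the proof depends on most delicately, is the graded hypothesis implicit in ``$\La$-chain complex'': it forces the differentials to be $\La$-linear maps of degree $-1$, and since $\La$ has homogeneous elements only in degrees divisible by $N_L$, this considerably restricts their form. Without grading, a free $\La$-chain complex such as $\La \xrightarrow{\, t-1 \,} \La$ would give torsion homology $\La/(t-1) \cong K$ that vanishes after tensoring with $\mathcal{A}$, because $T^{A_L} - 1$ is invertible in $\mathcal{A}$ with inverse $\sum_{n \ge 0} T^{n A_L}$. The grading rules out such non-homogeneous differentials and hence prevents this kind of torsion from appearing in $H(\mathcal{C})$.
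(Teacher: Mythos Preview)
Your proof is correct and takes a genuinely different route from the paper's. The paper argues directly at the chain level: writing $\mathcal{C}=F\otimes\Lambda$ with $F$ a finite-dimensional graded $K$-vector space, it introduces a valuation $v(T^a f)=a/\rho - |f|$ on $\mathcal{C}'$, decomposes a presumed bounding element $e'$ according to $v$, and then extracts from the relevant graded piece those terms whose $T$-exponents are integral multiples of $A_L$; this subsum lies in $\textnormal{image}(j_{\mathcal{A}})$ and still bounds $c'$. Your argument is more structural: you observe that $\Lambda$ is a PID and $\mathcal{A}$ an integral domain containing it, hence $\mathcal{A}$ is flat over $\Lambda$, and then you use the $N_L$-periodicity forced by the grading to conclude that $H(\mathcal{C})$ is free over $\Lambda$, so that $H(\mathcal{C})\to H(\mathcal{C})\otimes_\Lambda\mathcal{A}$ is injective. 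Your approach is cleaner, does not require $F$ to be finite-dimensional, and makes transparent (via your counterexample $\Lambda\xrightarrow{t-1}\Lambda$) exactly where the graded hypothesis enters; the paper's hands-on argument, by contrast, stays closer to the explicit form of the pearl complex and avoids invoking general module-theoretic facts.
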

\begin{proof} Let $\mathcal{C}= F\otimes \La$ where $F$ is a graded,
   finite dimensional $\Z_{2}$-vector space. Given $T^{a}f\in
   \mathcal{C}'$ with $a\in \R$ and $f\in F$ put $v(T^{a}f)=a/\rho
   -|f|$, thus $v(j_{\mathcal{A}}(t^{k}f))= -|t^{k}f|$.  Assume first
   that $c'\in \mathcal{C}'$ is the image of a cycle $c\in\mathcal{C}$
   of pure degree equal to $k$. Assume also $c'=d_{\mathcal{A}}e'$,
   $e'\in \mathcal{C}'$.  We now decompose
   $e'=\sum_{\alpha}e'_{\alpha}$ so that $v(e'_{\alpha})=\alpha$.
   Notice that for an element of pure degree $f\in F$ we have
   $v(d_{\mathcal{A}} T^{a}f)=v(T^{a}f)+1$. Therefore,
   $d_{\mathcal{A}}e'=c'$ means that $d_{\mathcal{A}}e'_{-k-1}=c'$ and
   $d_{\mathcal{A}}e'_{\beta}=0$ for all $\beta\not= -k-1$.  As
   $v(e'_{-k-1})=-k-1$ we can write $e'_{-k-1}=\sum T^{h_{i}}e_{i}$
   where $\{e_{i}\}\subset F$ is a basis formed by elements of pure
   degree. Write $c'=\sum T^{ k_{i}}e_{i}$.  Thus, each $k_{i}$ is an
   integral multiple of $A_{L}$. Moreover, in each differential
   $d_{\mathcal{A}}e_{i}$ the powers of $T$ that appear are also
   integral powers of $A_{L}$. In view of this we put $e''=\sum_{i\in
     S}= T^{h_{i}}e_{i}$ where $S$ is the set of indexes $i$ so that
   $h_{i}$ is an integral multiple of $A_{L}$. We write
   $e'_{-k-1}=e''+e'''$ and we see that $d_{\mathcal{A}}e''=c'$ and
   $e''\in \textnormal{image\,}(j_{\mathcal{A}})$.  When
   $c'=j_{\mathcal{A}}(c)$ with $c$ not necessarily of pure degree we
   decompose $c'$ as $c'=\sum_{j} c'_{j}$ with $c'_{j}$ so that
   $v(c'_{j})=j$ and we apply the argument above to each non-vanishing
   $c'_{j}$.
\end{proof}

\subsubsection{The PSS isomorphism} \label{sbsb:pss} Let $L \subset M$
be a monotone Lagrangian. Denote by $\eta_0 \in
\pi_0(\mathcal{P}(L,L))$ the connected component of a constant path on
$L$. In contrast to the case of two general Lagrangians, the Floer
homology of the pair $(L,L)$ is all concentrated in the component
$\eta_0$, i.e.  $i_{\eta_0} : HF(L,L;\eta_0) \longrightarrow HF(L,L)$
is an isomorphism.

The PSS (Piunikin-Salamon-Schwarz) isomorphism is a comparison between
the Lagrangian quantum homology and the self Floer homology of $L$.
More precisely, there is a canonical isomorphism
$$PSS: QH(L;\mathcal{A}) \longrightarrow HF(L,L)$$
coming from a chain morphism $\widetilde{PSS}_{\eta_0}
:\mathcal{C}(\mathscr{D};\mathcal{A}) \longrightarrow
CF(L,L;\eta_0;H,\mathbf{J})$. The construction of
$\widetilde{PSS}_{\eta_0}$ is very similar to the one described
in~\cite{Alb:PSS, Bi-Co:rigidity, Bi-Co:Yasha-fest} over the ring
$\Lambda$.  The only needed modification when working with
$\mathcal{A}$ is to incorporate the total areas of the connecting
trajectories that appear in the morphism $\widetilde{PSS}_{\eta_0}$.

The map $PSS_{\eta_0}: QH(L;\mathcal{A}) \longrightarrow
HF(L,L;\eta_0)$ induced in homology by $\widetilde{PSS}_{\eta_0}$ is
an isomorphism. The isomorphism $PSS$ is now defined as $i_{\eta_0}
\circ PSS_{\eta_0}$.

There also exists a version of the PSS morphism which is defined using
moving boundary conditions. Specifically, assume that $\varphi$ is a
Hamiltonian isotopy and let $L'=\varphi_{1}(L)$.  Then we have an
isomorphism
$$\widehat{PSS}:QH(L;\mathcal{A}) \longrightarrow HF(L,L')~.~$$ 
Its definition is straightforward in view of the definition of
$\widetilde{PSS}$ and~\S\ref{subsubsec:movingbdry}.

\subsection{Products and other structures} \label{sbsb:prod} The
Lagrangian and ambient quantum homologies as well as the Floer
homologies are all related via several compatible algebraic structures
endowed with ring and module operations. In the more general context
of Floer homology these issues are developed in \cite{FO3:book-vol1,
  FO3:book-vol2}. We refer the reader to~\cite{Bi-Co:qrel-long,
  Bi-Co:rigidity, Bi-Co:Yasha-fest} for the explicit constructions in
the special setting that is used in this paper.
  
The quantum homologies $QH(L;\Lambda)$ and $QH(L;\mathcal{A})$ are
endowed with an associative  product $*$ with unity  (they are in general 
not commutative). We denote the unity by $[L] \in QH_n(L;\Lambda)$.

For a uniformly monotone pair of Lagrangians $(L_1, L_2)$ the Floer
homology $HF(L_1, L_2)$ is a left module over $QH(L_1;\mathcal{A})$
and a right module over $QH(L_2;\mathcal{A})$. We denote these module
operations by $\alpha_1*x$ and $x*\alpha_2$, for $x \in HF(L_1, L_2)$,
$\alpha_1 \in QH(L_1;\mathcal{A})$, $\alpha_2 \in
QH(L_2;\mathcal{A})$. The two module structures are mutually
compatible in the sense that associativity holds:
$(\alpha_1*x)*\alpha_2 = \alpha_1*(x*\alpha_2)$.

There is a duality isomorphism relating $HF(L_1, L_2)$ and
$\textnormal{hom}_{\mathcal{A}}(HF(L_2, L_1), \mathcal{A})$. In case
$L=L_1=L_2$ is exact this duality reduces to Poincar\'{e} duality.
Similarly, $QH(L)$ also admits a duality induced by the correspondence
between the pearl complex of a function $f$ and the pearl complex of
the function $-f$.

Finally, the Floer homology $HF(L_1, L_2)$ is also a module over the
ambient quantum homology $QH(M)$.


\section{Floer homology and the proof of
  Theorem~\ref{cor:exact-tri-explicit}}
\label{sb:func-F}

In the sequel we will make use of Floer homology for pairs of
Lagrangian submanifolds with cylindrical ends - a natural extension of
cobordisms that we introduce just below.  Given this definition there
are essentially three ingredients that are important in the proofs of
all our results: a compactness argument, a definition of Floer
complexes for Lagrangians with cylindrical ends, and finally a method
to use plane curve combinatorics to deduce algebraic properties of the
differential in such Floer complexes.  Variants of these constructions
appear in slightly different settings in the literature (see for
instance the works of Seidel~\cite{Se:book-fukaya-categ},
Abouzaid~\cite{Abouzaid:homog-coord},
Auroux~\cite{Aur:fuk-cat-sym-prod}, as well as earlier work of
Oh~\cite{Oh:hf-non-compact}). Besides this, standard techniques
together with the methods in
\cite{Bi-Co:qrel-long},\cite{Bi-Co:rigidity} are sufficient to deal
with transversality issues.

\subsection{Lagrangian submanifolds with cylindrical ends}
\label{sbsb:lag-cyl} 
To simplify notation we will write from now on $\widetilde{M} =
\mathbb{R}^2 \times M$ endowed with the split form
$\omega_{\mathbb{R}^2} \oplus \omega$. We will also identify in the
standard way $\mathbb{R}^2 \cong \mathbb{C}$ endowed with the standard
complex structures $i$.

By a {\em Lagrangian submanifold with cylindrical ends} we mean a
Lagrangian submanifold $\overline{V} \subset \widetilde{M}$ without
boundary that has the following properties:
\begin{enumerate}
  \item For every $a<b$ the subset $\overline{V}|_{[a,b] \times
     \mathbb{R}}$ is compact.
  \item There exists $R_+$ such that $$\overline{V}|_{[R_+, \infty)
     \times \mathbb{R}} = \coprod_{i=1}^{k_+} [R_+, \infty) \times
   \{a^+_i\} \times L^{+}_i$$ for some $a^+_1 < \cdots < a^+_{k_+}$
   and some Lagrangian submanifolds $L^+_1, \ldots, L^+_{k_+} \subset
   M$.
  \item There exists $R_{-} \leq R_+$ such that
   $$\overline{V}|_{(-\infty, R_{-}] \times
     \mathbb{R}} = \coprod_{i=1}^{k_-} (-\infty, R_-] \times \{a^-_i\}
   \times L^{-}_i$$ for some $a^{-}_1 < \cdots < a^-_{k_-}$ and some
   Lagrangian submanifolds $L^{-}_1, \ldots, L^{-}_{k_-} \subset M$.
\end{enumerate}
We allow $k_+$ or $k_-$ to be $0$ in which case $\overline{V}|_{[R_+,
  \infty)\times \mathbb{R}}$ or $\overline{V}|_{(-\infty, R_{-}]
  \times \mathbb{R}}$ are void.

For every $R \geq R_+$ write $E^+_{R}(\overline{V}) =
\overline{V}|_{[R, \infty) \times \mathbb{R}}$ and call it a positive
cylindrical end of $\overline{V}$. Similarly, we have for $R \leq
R_{-}$ a negative cylindrical end $E^{-}_{R}(\overline{V})$.

Obviously if $W$ is a cobordism between $(L'_1, \ldots, L'_r)$ and
$(L_1, \ldots, L_s)$ then its $\mathbb{R}$-extension $\overline{W}$ -
see (\ref{eq:R-extension}) - is a Lagrangian submanifold of
$\widetilde{M}$ with cylindrical ends.  Vice versa, if $\overline{W}$
is a Lagrangian submanifold with cylindrical ends then by an obvious
modification of the ends (and a possible symplectomorphism on the
$\mathbb{R}^2$ component) it is easy to obtain a Lagrangian cobordism
between the families of Lagrangians corresponding to the positive and
negative ends of $\overline{W}$.

In order to simply terminology, we will say that a Lagrangian with
cylindrical ends $\overline{V}$ is cylindrical outside of a compact
subset $K \subset \mathbb{R}^2$ if $\overline{V}|_{\mathbb{R}^2
  \setminus K}$ consists of horizontal ends, i.e. it is of the form
$E^{-}_{R_{-}}(\overline{V}) \cup E^+_{R_+}(\overline{V})$.

We will also need the following notion.
\begin{dfn} \label{d:cyl-dist} Two Lagrangians with cylindrical ends
   $\overline{V}, \overline{W} \subset \widetilde{M}$ are said to be
   cylindrically distinct at infinity if there exists $R>0$ such that
   $\pi(E^+_R(\overline{V})) \cap \pi(E^+_R(\overline{W})) =
   \emptyset$ and $\pi(E^-_{-R}(\overline{V})) \cap
   \pi(E^-_{-R}(\overline{W})) = \emptyset$.
\end{dfn}

Finally, let us describe a class of Hamiltonian isotopies that will be
useful in the following.
\begin{dfn}[Horizontal isotopies] \label{d:isotopies} Let
   $\{\overline{V_t}\}_{t \in [0,1]}$ be an isotopy of Lagrangian
   submanifolds of $\widetilde{M}$ with cylindrical ends. We call this
   isotopy horizontal if there exists a (not necessarily compactly
   supported) Hamiltonian isotopy $\{\psi_t\}_{t \in [0,1]}$ of
   $\widetilde{M}$ with $\psi_0 = \Id$ and with the following
   properties:
   \begin{enumerate}
     \item[i.] $\overline{V_t} = \psi_t(\overline{V_0})$ for all $t
      \in [0,1]$.
     \item[ii.] There exist real numbers $R_- < R_+$ such that for all
      $t\in [0,1]$, $x\in E^{\pm}_{R_{\pm}}(\overline{V_0})$ we have
      $\psi_t(x)\in E^{\pm}_{R_{\pm}}(\overline{V_0})$.
     \item[iii.] There is a constant $K>0$ so that for all $x\in
      E^{\pm}_{R_{\pm}}(\overline{V_0})$, $|d\pi_{x}(X_t(x))|< K$.  Here
      $X_t$ is the (time dependent) vector field of the flow
      $\{\psi_t\}_{t \in [0,1]}$.
   \end{enumerate}
   In other words, the Hamiltonian flow $\psi_t$ moves tangentially
   along the cylindrical ends of $\overline{V_0}$ and at bounded
   speed.  Of course, the ends of all the Lagrangians $\overline{V_t}$
   coincide at infinity.  We say that two Lagrangians $\overline{V},
   \overline{V'} \subset \widetilde{M}$ with cylindrical ends are {\em
     horizontally} isotopic if there exists an isotopy as above
   $\{\overline{V_t}\}_{t \in [0,1]}$ with
   $\overline{V_0}=\overline{V}$ and $\overline{V_1} = \overline{V'}$.
   Finally, we will sometime say that an ambient Hamiltonian isotopy
   $\{\psi_t\}_{t \in [0,1]}$ as above is horizontal with respect to
   $\overline{V_0}$.
\end{dfn}

\subsection{Compactness} \label{subsec:compact} Given that cobordisms
are viewed as Lagrangians with cylindrical ends and thus are
non-compact, the compactness of pseudo-holomorphic curves with
boundaries on such Lagrangians is the first main technical issue that
one has to deal with.  We address this issue following a variation on
an argument that has originally appeared in Chekanov's work
\cite{Chek:cob}.

For this discussion we fix two Lagrangians with cylindrical ends
$\overline{W}$ and $\overline{W'}$  -
see Figure~\ref{fig:CobordismsFloerStrips}.
\begin{figure}[htbp]
   \epsfig{file=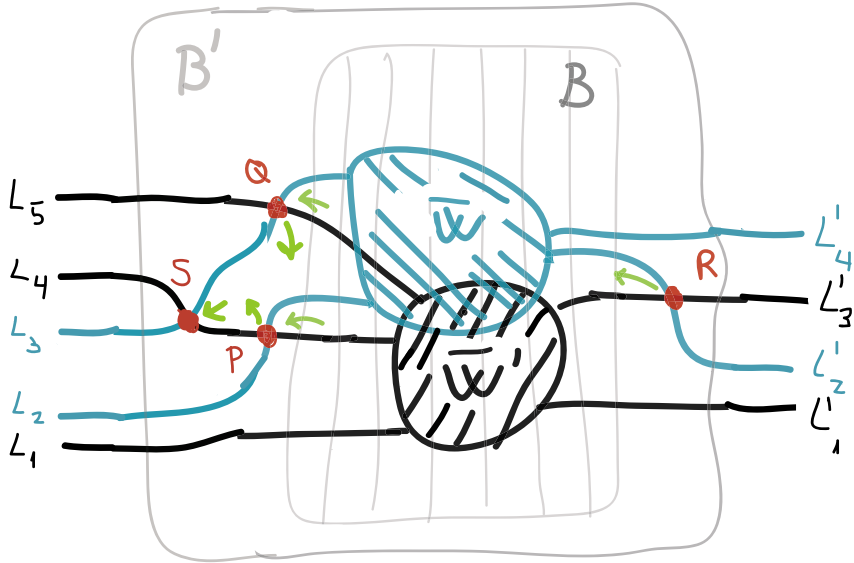, width=0.70\linewidth}
   \caption{Two Lagrangians with cylindrical ends $\overline{W}$ and
     $\overline{W'}$ projected on the plane with the box $B$ outside
     of which $\pi$ is $(\widetilde{J}, i)$-holomorphic and with
     $\widetilde{J}$-holomorphic {\color{green} strips} starting and
     entering intersection points.  Outside the box $B'$ the ends are
     horizontal and do no longer intersect.}
   \label{fig:CobordismsFloerStrips}
\end{figure}
In contrast to~\eqref{eq:R-extension} we do not assume that they are
cylindrical horizontal\\ outside of  $[0,1]\times\mathbb{R} \times
M$, but rather that they are cylindrical outside a
compact subset $B' \subset \mathbb{R}^2$ in the sense of
\S\ref{sbsb:lag-cyl}. We also fix a compact region in the plane
$B\subset B' \cong \mathbb{R}^2$ and we will only consider almost
complex structures $\widetilde{J}$ so that $\pi$ is $(\widetilde{J},
i)$-holomorphic outside $B \times M$.  Moreover, outside $B$ each of
the cobordisms coincide for the negative ends with products
$\gamma^-_i \times L^-_{i}$ between certain planar curves
$\gamma^-_{i}$ and Lagrangians $L_{i}\subset M$ and similarly for the
positive ends, they are products $\gamma^+_j \times L^+_{j}$ with
Lagrangians $L^+_{j} \subset M$ and $\gamma^+_{j}$ curves in $\R^{2}$.

We also assume that the negative planar curves of $\overline{W}$
and those of $\overline{W'}$ intersect transversely, and similarly for
the positive planar curves of the two cobordisms.  Two curves
that correspond to positive (respectively, negative) ends of
$\overline{W}$ do not intersect outside $B$ and similarly for
$\overline{W'}$. Further, we  also assume that the Lagrangians in
$M$ corresponding to the positive ends of $\overline{W}$ and those
corresponding to the positive ends of $\overline{W'}$ are two-by-two
transverse in $M$ and similarly for the negative ends. 

The basic argument here appeared already in~\cite{Chek:cob} and is as
follows.  Assume that $u:\Sigma \to \mathbb{C} \times M$ is a
$\widetilde{J}$-holomorphic curve where $\Sigma$ is either the disk
$D^{2}$, the strip $\R\times [0,1]$ or the sphere $S^{2}$. In case
$\Sigma$ is the disk we assume that $u$ maps the boundary $\partial
\Sigma$ either to $\overline{W}$ or to $\overline{W'}$, and if
$\Sigma$ is the strip, we assume $u(\R\times\{0\})\subset
\overline{W}$, $u(\R\times\{1\})\subset \overline{W'}$.

\begin{lem}\label{lem:open-mapping} 
   Assume that the symplectic energy of $u$ is finite. Then either
   $\pi \circ u$ is constant or $\pi\circ u(\Sigma)\subset B'$.
\end{lem}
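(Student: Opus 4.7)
The plan is to combine the maximum principle with Schwarz reflection across horizontal rays, applied to $f:=\pi\circ u$. On $W:=u^{-1}(\widetilde{M}\setminus B\times M)$ the map $f$ is $(\widetilde{J},i)$-holomorphic, and on $\partial\Sigma\cap W$ it takes values in the planar projections of $\overline{W}$ and $\overline{W'}$; outside $B'$ these projections are horizontal rays $\{y=a_i^{\pm}\}$.

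First I would establish that $f(\Sigma)$ is bounded in $\mathbb{C}$. For $\Sigma\in\{D^2,S^2\}$ this is automatic, while for $\Sigma=\mathbb{R}\times[0,1]$ finite energy of $u$ combined with the standard asymptotic analysis at strip ends --- using that the positive (resp.\ negative) planar ends of the two cobordisms are mutually disjoint outside $B'$, so all intersections occur in $\overline{B'}\times M$ --- forces $u$ to converge as $s\to\pm\infty$ to intersection points in a compact set. The strip thus effectively compactifies for the arguments below.

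Suppose for contradiction that $f$ is non-constant but $f(z_0)\notin B'$ for some $z_0$; take $B'$ to be a closed rectangle, as in the figure. For the vertical bound, set $\alpha:=\max_{\overline{B'}}\mathrm{Im}$ and $V_+:=f^{-1}(\{\mathrm{Im}>\alpha\})$. Any $z\in V_+\cap\partial\Sigma$ would map onto a planar curve with $\mathrm{Im}(f(z))>\alpha$, but the planar curves outside $\overline{B'}$ are horizontal rays at heights $a_i^{\pm}\leq\alpha$; so $V_+\subset\mathrm{int}(\Sigma)$. On $V_+$, $\mathrm{Im}(f)$ is harmonic, equals $\alpha$ on $\partial V_+$ but is strictly greater inside, so the strong maximum principle forces $\mathrm{Im}(f)$, hence $f$ itself (holomorphic with constant imaginary part), to be locally constant on $V_+$. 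The identity theorem propagates this to the connected component $W_0\subset W$ containing $V_+$, and continuity of $f$ at $\partial W_0\subset f^{-1}(\overline{B})$ then pins the constant value in $\overline{B}\subset\overline{B'}$, contradicting $\mathrm{Im}(f)>\alpha$. Hence $V_+=\emptyset$; symmetrically $\mathrm{Im}(f)\geq \min_{\overline{B'}}\mathrm{Im}$ throughout.

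For the horizontal bound $\mathrm{Re}(f)\leq\beta:=\max_{\overline{B'}}\mathrm{Re}$, set $V_{\mathrm{right}}:=f^{-1}(\{\mathrm{Re}>\beta\})$. Now $V_{\mathrm{right}}\cap\partial\Sigma$ may be non-empty since horizontal rays extend past $\beta$. Pick a maximum $z^*$ of $\mathrm{Re}(f)$ on $\overline{V_{\mathrm{right}}}$: if $z^*$ is interior, the preceding argument applies verbatim. If $z^*\in V_{\mathrm{right}}\cap\partial\Sigma$, Schwarz reflection across the horizontal line $\{y=a_i^{\pm}\}$ through $f(z^*)$ extends $f$ holomorphically to a neighborhood of $z^*$ in $\mathbb{C}$; reflection across a horizontal line preserves $\mathrm{Re}$, so $z^*$ becomes an interior maximum of $\mathrm{Re}$ of the extended map, again forcing $f$ locally constant and yielding the same contradiction through the identity theorem and boundary continuity. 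A symmetric argument bounds $\mathrm{Re}(f)$ below. Combining all four bounds gives $f(\Sigma)\subset B'$. The crucial technical step is the Schwarz reflection: it exploits precisely that outside $B'$ the planar projections are \emph{affine} horizontal lines, not merely cylindrical in some weaker sense --- this is the geometric content of the horizontality hypothesis and is what Chekanov's argument in \cite{Chek:cob} leverages.
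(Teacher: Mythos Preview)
Your argument is correct, but it takes a genuinely different route from the paper's. The paper's proof is a three-line application of the open mapping theorem: after establishing boundedness of $f=\pi\circ u$ (as you do), it observes that every connected component of $\mathbb{C}\setminus\bigl(B'\cup\pi(\overline{W})\cup\pi(\overline{W'})\bigr)$ is unbounded; if the image of $f$ left $B'$ and $f$ were non-constant, open mapping would force $f(\mathrm{Int}\,\Sigma)$ to contain one of those unbounded components, contradicting boundedness. There is no maximum principle, no Schwarz reflection, and no case distinction on whether extremal points sit on $\partial\Sigma$ --- the boundary is handled automatically because $f(\partial\Sigma)$ lies in the curves that have been removed.

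Your approach via the maximum principle for $\mathrm{Im}(f)$ and $\mathrm{Re}(f)$, together with Schwarz reflection across the horizontal rays to push boundary maxima to interior ones, is perfectly valid but longer and more sensitive to the precise geometry: you need $B'$ to be (or to be enlarged to) a rectangle, and you genuinely use that the ends are \emph{horizontal} so that reflection preserves $\mathrm{Re}$. The paper's argument only needs that the complement has no bounded components, which is a topological rather than affine condition. One small correction: your closing remark that Schwarz reflection ``is what Chekanov's argument leverages'' is not accurate --- the argument in \cite{Chek:cob}, which the paper follows, is the open-mapping/unbounded-component one, not a reflection argument.
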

\begin{proof}
   The first remark is that $\pi\circ u(\Sigma)$ is bounded. Indeed,
   this is clear for $\Sigma= D^{2}, S^{2}$.  If $\Sigma=\R\times
   [0,1]$ then due to the finite energy condition we get that
   $u(\Sigma)$ converges at $\pm \infty$ to some point in
   $\overline{W}\cap \overline{W'}$. But as $\pi(\overline{W}\cap
   \overline{W'})\subset B'$ we get that $\pi\circ u(\Sigma)$ is
   bounded in this case too.

   Now assume that $\pi\circ u (\Sigma)\not\subset B'$.  Notice that
   $\C\backslash (B' \cup \pi(\overline{W}) \cup \pi(\overline{W'}))$
   is a union of unbounded domains in $\C$.  As
   $\textnormal{image\,}(\pi\circ u)$ is bounded it follows that
   $\pi\circ u$ is constant. Indeed, otherwise an application of the
   open mapping theorem to the holomorphic map $\pi\circ u$ implies
   that the image of $\pi\circ u|_{\textnormal{Int\,} \Sigma}$
   contains an unbounded region.
\end{proof}

\begin{rem} a. \label{rem:extension-opem-mapp} It is a simple exercise
   to show that the conclusion of the Lemma \ref{lem:open-mapping}
   remains valid even if $u$ is not $\widetilde{J}$-holomorphic but
   rather it satisfies a perturbed Cauchy-Riemann equation of the form
   $\overline{\partial}_{\widetilde{J}} u - \widetilde{J}
   X_{H}(z,u)=0$ where $H_{z}:\widetilde{M}\to \R$, $z\in \Sigma$ is a
   smooth family of Hamiltonians with a compact support contained in
   $B'\times M$.
   
   b. It is easy to see that, in the argument above, the actual set
   $B$ does not intervene: only $B'$ plays a role. In other words,
   with $B'$ fixed as above, the almost complex structure
   $\widetilde{J}$ only needs to satisfy the requirement relative to
   $B=B'$.  However, we will reuse
   Figure~\ref{fig:CobordismsFloerStrips} later in the paper
   in~\S\ref{sb:certain-strips} and at that time the particular choice
   of $B$ will be useful.
\end{rem}
 
\subsection{Definition of Floer homology for Lagrangians with cylindrical ends}
\label{sbsb:hf-lag-cyl} Here we explain the necessary modifications
needed for the constructions and structures from~\S\ref{sb:rev-HF} to
adapt to Lagrangian cobordisms (rather than just closed Lagrangian
submanifolds).

Let $\overline{W}$ and $\overline{W}'$ be two uniformly monotone
Lagrangians with cylindrical ends.  We will {\em not} assume for now
that they are cylindrically distinct at $\infty$ - see
Definition~\ref{d:cyl-dist}.

We intend to define the Floer complex
$CF(\overline{W},\overline{W}';\eta; (H,f); \widetilde{\mathbf{J}})$
with coefficients in $\mathcal{A}$ 
 (see \S\ref{sb:rev-HF}) and we now
describe the data involved in this definition.

\noindent {\bf A}. The almost complex structure
$\widetilde{\mathbf{J}}=\{ \widetilde{J}_{t}\}_{t\in [0,1]}$.  For a
compact subset $B \subset \mathbb{R}^2$ denote by
$\widetilde{\mathbf{\mathcal{J}}}_B$ the (families of) almost complex
structures $\{ \widetilde{J_t} \}_{t \in [0,1]}$ on $(\widetilde{M},
\widetilde{\omega}) = (\R^{2}\times M, \omega_{0}\oplus \omega)$ with the
following properties:
\begin{enumerate}
  \item For every $t$, $\widetilde{J_t}$ is an
   $\widetilde{\omega}$-tamed almost complex structure on
   $\widetilde{M}$.
  \item For every $t$, the projection $\pi$ is $(\widetilde{J_t},
   i)$-holomorphic on $(\mathbb{R}^2 \setminus B) \times M$.
\end{enumerate}
If $B = \emptyset$ we simply write $\widetilde{\mathbf{\mathcal{J}}}$.

\noindent {\bf B}. The component $\eta\in
\pi_{0}(\mathcal{P}(\overline{W},\overline{W}')$ is fixed as in
\S\ref{sbsb:lag-hf}.

\noindent {\bf C}. The perturbation $(H,f)$.  To describe these
perturbations we first fix the notation for the ends of $\overline{W}$
(see \S\ref{sbsb:lag-cyl}).  Thus for $R_{+}$ and $R_{-}$ sufficiently
big we assume
$$\overline{W}|_{[R_+, \infty) \times \mathbb{R}} = 
\coprod_{i=1}^{k_+} [R_+, \infty) \times \{a^+_i\} \times L^{+}_i$$
for some $a^+_1 < \cdots < a^+_{k_+}$ and
$$\overline{W}|_{(-\infty, R_{-}] \times
  \mathbb{R}} = \coprod_{i=1}^{k_-} (-\infty, R_-] \times \{a^-_i\}
\times L^{-}_i$$ for some $a^{-}_1 < \cdots < a^-_{k_-}$.

The couple $(H,f)$ consists of two Hamiltonians $H: [0,1]\times
\widetilde{M}\to \R$ and $f:\R^{2}\to \R$ with the following
properties:
\begin{enumerate}
  \item The support of $H$ is compact.
  \item The function $f:\R^{2}\to \R$ satisfies:
   \begin{itemize}
     \item[1.] The support of $f$ is contained in the union of the
      sets $$U_{i}^{+}= [R_{+}+1,\infty)\times
      [a^{+}_{i}-\epsilon^{+}_{i}, a^{+}_{i}+\epsilon^{+}_{i}]$$ and
      $$U_{i}^{-}= (-\infty, R_{-}-1]\times
      [a^{-}_{i}-\epsilon^{-}_{i}, a^{-}_{i}+\epsilon^{-}_{i}]~,~$$ where
      the positive constants $\epsilon^{\pm}_{i}$ are small enough
      (and the numbers $R_{+}$ and $R_{-}$ are big enough) so that
      all the sets $U^{\pm}_{i}$ above are pairwise disjoint.
     \item[2.] The restriction of $f$ to each set
      $V_{i}^{+}=[R_{+}+2,\infty)\times [a^{+}_{i}-\epsilon^{+}_{i}/2,
      a^{+}_{i}+\epsilon^{+}_{i}/2]$ and $V_{i}^{-}=(-\infty, R_{+}-2]
      \times [a^{-}_{i}-\epsilon^{-}_{i}/2,
      a^{-}_{i}+\epsilon^{-}_{i}/2]$ is of the form
      $$f(x,y)=\alpha_{i}^{\pm}x+\beta_{i}^{\pm}$$ with
      $\alpha_{i}^{\pm}\in \R$ sufficiently small so that the 
      Hamiltonian isotopy of $\R^{2}$, $\phi^{f}_{t}$, associated to $f$
      keeps the sets $[R_{+}+2,\infty)\times \{a^{+}_{i}\}$ and
      $(-\infty, R_{-}-2]\times \{a^{-}_{i}\}$ inside the respective
      $V_{i}^{\pm}$ for $0\leq t\leq 1$.
     \item[3.] We assume $R_{+}$ and $R_{-}$ sufficiently big so that
      $\overline{W}'$ is cylindrical on $(-\infty, R_{-}]\times \R$ as
      well as on $[R_{+},\infty)\times \R$ and we assume that
      $\epsilon^{\pm}_{i}$ is sufficiently small so that
      \begin{equation}\label{eq:neigh-disj}
         (U_{i}^{+}\backslash ([R_{+}+1,\infty)
         \times \{a^{+}_{i}\}))\cap \overline{W}'=\emptyset
      \end{equation}
      for all indexes $i$ and similarly 
      \begin{equation}\label{eq:neigh-disj2}
         (U_{i}^{-}\backslash ((-\infty, R_{-}-1]\times 
         \{ a^{-}_{i}\} ))\cap \overline{W}'=\emptyset~.~
      \end{equation}
   \end{itemize}
\end{enumerate}
Let $e=f\circ \pi$ be the composition with
$\pi:\widetilde{M}=\R^{2}\times M\to \R^{2}$ the projection. We denote
by $\mathcal{H}(\overline{W}, \overline{W}')$ the space of pairs
$(H,f)$ as above \RE{so that additionally $\phi^{e}_{1}(\overline{W})$
  and $\overline{W}'$ are cylindrically distinct at infinity.} The
role of the function $f$ is to perturb the Lagrangian $\overline{W}$
so as to render it cylindrically distinct at infinity from
$\overline{W}'$ by using the Hamiltonian flow associated to $e$.  This
is precisely the meaning of the requirements in equations
(\ref{eq:neigh-disj}), (\ref{eq:neigh-disj2}): the Hamiltonian flow
$\phi^{e}_{t}$ associated to $e$ has the property that
$\phi^{e}_{t}(\overline{W})$ and $\overline{W}'$ are cylindrically
distinct at infinity for all $t\in (0,1]$ whether or not
$\overline{W}$ and $\overline{W}'$ are cylindrically distinct at
infinity to start with.  Clearly, if $\overline{W}$ and
$\overline{W}'$ are not cylindrically distinct at infinity, then the
constants $\alpha^{\pm}_{i}$ associated to those ends of
$\overline{W}$ that coincide with some ends of $\overline{W}'$ satisfy
$\alpha^{\pm}_{i}\not=0$.  This implies that in this case the space
$\mathcal{H}(\overline{W},\overline{W}')$ has more than a single
connected component. It is easy to see that each such component is
convex, hence contractible.  Moreover, these components only depend on
$f$ and not on $H$ so that we will denote the path component of
$\mathcal{H}(\overline{W},\overline{W}')$ associated to a pair $(H,f)$
by $[f]$.
 
Finally, we define the complex $CF(\overline{W},\overline{W}';\eta;
(H,f);\widetilde{\mathbf{J}})$ where $\eta$ is as at point \RE{{\bf  B}}
above, $(H,f)\in \mathcal{H}(\overline{W},\overline{W}')$ generic and
$\widetilde{\mathbf{J}} \in \widetilde{\mathbf{\mathcal{J}}}_B$ for
some compact set $B$ is also generic.
 
We put:
\begin{equation}\label{eq:Floer-cplx-cyl}
   CF(\overline{W},\overline{W}';\eta; (H,f);\widetilde{\mathbf{J}}):= 
   CF(\phi^{f\circ \pi}_{1}(\overline{W}), 
   \overline{W}';\eta'; H;\widetilde{\mathbf{J}}),
\end{equation} 
where $\eta'$ is the path component that corresponds to $\eta$ under
the isotopy $\phi^{f\circ \pi}_{t}$. 
 
Of course, we still have to justify the right term in equation
(\ref{eq:Floer-cplx-cyl}). In view of the fact that $H$ is compactly
supported and due to our choice of $\widetilde{\mathbf{J}}$ it is
immediate to see that the (standard) construction of the Floer complex
- recalled in~\S\ref{sbsb:lag-hf} - carries over to this setting. This
is true because compactness for the finite energy solutions of Floer's
equation
\begin{equation}\label{eq:Floer-equation}
   \overline{\partial}_{\widetilde{\mathbf{J}}}
   u +\nabla H(t,u)=0
\end{equation}
for $u:[0,1]\times \R\to \widetilde{M}$ subject to the boundary
conditions $u(\{0\}\times \R)\subset \phi^{e}_{1}(\overline{W})$ and
$u(\{1\}\times\R)\subset \overline{W}'$ follows from an immediate
adaptation of Lemma \ref{lem:open-mapping} as indicated in Remark
\ref{rem:extension-opem-mapp}.  Thus the Floer complex
$CF(\phi^{f\circ\pi}_{1}(\overline{W}), \overline{W}';\eta';
H;\widetilde{\mathbf{J}})$ is well-defined. 
 
As in \S\ref{sbsb:lag-hf} we omit the component $\eta$ in case we take
into account all Hamiltonian chords, belonging to all the connected
components of $\mathcal{P}(\overline{W},\overline{W}')$.
 
\begin{prop} \label{prop:invariance-Fl} The homology of the complex
   $CF(\overline{W}, \overline{W}';(H,f);\widetilde{\mathbf{J}})$ is
   independent of $H$, $\widetilde{\mathbf{J}}$ and only depends on
   the path connected component $[f]\in \pi_0(
   \mathcal{H}(\overline{W}, \overline{W}'))$ up to canonical
   isomorphism.  We denote this homology by
   $HF(\overline{W},\overline{W}'; [f])$.
   
   If $\phi = \{\phi_t\}_{t \in
     [0,1]}$ is a horizontal isotopy with respect to $\overline{W}$,
   then there is an isomorphism $HF(\overline{W},\overline{W}'; [f])\to
   HF(\phi_{1}(\overline{W}),\overline{W}';\phi_{1}[f])$ that only depends on the
   homotopy class of the path of Hamiltonian diffeomorphisms
   $\phi_{t}$ (with fixed end-points). A similar statement is valid if
   we act with a horizontal isotopy on $\overline{W}'$ and keep
   $\overline{W}$ fixed.
\end{prop}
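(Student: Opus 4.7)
The strategy is to adapt the standard continuation arguments of Floer theory to the non-compact setting, using Lemma \ref{lem:open-mapping} together with Remark \ref{rem:extension-opem-mapp} to obtain the necessary compactness at each stage. I would verify in turn: (i) independence from $(H,\widetilde{\mathbf{J}})$ with $f$ fixed; (ii) dependence on $f$ only through the path component $[f]$; and (iii) the isomorphism induced by a horizontal isotopy.

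For (i), fix $f$ and choose two admissible sets of auxiliary data $(H_0,\widetilde{\mathbf{J}}_0)$ and $(H_1,\widetilde{\mathbf{J}}_1)$. Take a generic homotopy $\{(H_s,\widetilde{\mathbf{J}}_s)\}_{s\in\R}$ between them such that all $H_s$ are uniformly compactly supported and each $\widetilde{\mathbf{J}}_s$ lies in $\widetilde{\mathcal{J}}_B$ for a single compact $B\subset\R^2$. Count zero-dimensional moduli of $s$-dependent Floer strips with boundary on $\phi^{f\circ\pi}_1(\overline{W})$ and $\overline{W}'$ to define a continuation chain map. Because every $H_s$ is supported in a set $B'\times M$ and $\pi$ is $(\widetilde{J}_{s,t},i)$-holomorphic outside $B\times M$, Lemma \ref{lem:open-mapping} with Remark \ref{rem:extension-opem-mapp} confines the projection of any finite-energy continuation solution to $B'$. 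Uniform monotonicity of $\overline{W},\overline{W}'$ then prevents disk bubbling in the usual way, and the standard gluing/compactness machinery produces the required chain homotopy equivalence.

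For (ii), pick a path $\{f_s\}$ within the prescribed component of $\mathcal{H}(\overline{W},\overline{W}')$; convexity, hence contractibility, of the component will ultimately make the induced isomorphism canonical. The family $\{\phi^{f_s\circ\pi}_1\}$ is a (non-compactly supported) Hamiltonian isotopy of $\widetilde{M}$ that moves $\overline{W}$ tangentially along its cylindrical ends and keeps $\phi^{f_s\circ\pi}_1(\overline{W})$ cylindrically distinct from $\overline{W}'$ throughout. Applying the moving boundary construction of \S\ref{subsubsec:movingbdry} to the pair $(\phi^{f_s\circ\pi}_1(\overline{W}),\overline{W}')$, with the Novikov weights corrected by a functional of the type $\Theta$ appearing in \eqref{eq:moving-brdy-sum}, one obtains a chain map between the two Floer complexes. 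Once more, compactness of the resulting parametric moduli is reduced to Lemma \ref{lem:open-mapping}, since outside $B'$ the moving boundary still consists of horizontal cylindrical ends and $\pi$ remains holomorphic.

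For (iii), the horizontal isotopy $\{\phi_t\}$ plays a role analogous to $\{\phi^{f_s\circ\pi}_1\}$ above. Condition (ii) in Definition \ref{d:isotopies} ensures that the cylindrical ends of $\phi_t(\overline{W})$ coincide with those of $\overline{W}$ at infinity, so that cylindrical distinctness (after applying the fixed perturbation $\phi^{f\circ\pi}_1$) is preserved along the isotopy. Condition (iii), the bounded-speed assumption, is exactly what is needed to ensure that the symplectic areas $\omega(v_u)$ entering \eqref{eq:moving-brdy-sum} differ from $\omega(u)$ by a uniformly bounded amount, so that the resulting sums converge in $\mathcal{A}$. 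A standard two-parameter moduli argument, once again with compactness coming from Lemma \ref{lem:open-mapping}, shows that the induced isomorphism depends only on the homotopy class of $\{\phi_t\}$ with fixed endpoints. The analogous argument applies when the horizontal isotopy acts on $\overline{W}'$ with $\overline{W}$ held fixed.

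\textbf{The main obstacle} is, as one would expect, controlling moduli spaces in a non-compact ambient: one must simultaneously prevent Floer and continuation trajectories from escaping along the cylindrical ends and keep their Novikov weights controlled enough to produce chain maps valued in $\mathcal{A}$. The open-mapping argument of Lemma \ref{lem:open-mapping}, appropriately extended to Hamiltonian perturbations with compactly supported vector fields, confines projections of all relevant curves to a fixed compact planar region, while the bounded-speed condition on horizontal isotopies keeps area fluctuations bounded along moving boundaries; together these inputs make all the standard continuation-style arguments go through.
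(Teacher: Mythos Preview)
Your proposal is correct and follows essentially the same approach as the paper: reduce the invariance in $H$ and $\widetilde{\mathbf{J}}$ to the standard continuation argument, handle the $f$-dependence and horizontal isotopies via moving boundary conditions as in \S\ref{subsubsec:movingbdry}, and in each case obtain the needed compactness from Lemma~\ref{lem:open-mapping} (and Remark~\ref{rem:extension-opem-mapp}) by observing that cylindrical distinctness at infinity is preserved along the relevant families. The paper makes the $f$-argument explicit via the linear interpolation $f_\tau=\nu(\tau)f+(1-\nu(\tau))f'$, but this is exactly your path within the convex component $[f]$.
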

 
In case $\overline{W}$ and $\overline{W}'$ are distinct at infinity,
then $\mathcal{H}(\overline{W},\overline{W}')$ is path connected and
we may take $f=0$. In this case we denote the homology simply by
$HF(\overline{W},\overline{W}')$.  Moreover, if $\overline{W}$,
$\overline{W}'$ are distinct at infinity and transverse, then for
generic $\widetilde{\mathbf{J}} \in \widetilde{\mathcal{J}}_{B}$ (with
$B$ sufficiently big) the complex $CF(\overline{W}, \overline{W}';
(0,0); \widetilde{\mathbf{J}})$ is well-defined and we denote it by
$CF(\overline{W},\overline{W}'; \widetilde{\mathbf{J}})$.
 
\begin{proof}[Proof of Proposition \ref{prop:invariance-Fl}] First,
   the standard invariance arguments for Floer homology easily adapt
   to this setting, again by using the compactness argument in Lemma
   \ref{lem:open-mapping}, to show independence with respect to
   choices of $H$ and $\widetilde{\mathbf{J}}$. The only less
   immediate invariance statements concern the independence of $f$ -
   inside the same connected component of $\mathcal{H}(\overline{W},
   \overline{W}')$ - and with respect to horizontal homotopies.
 
   The invariance in both these cases follows from the standard
   construction of Floer Lagrangian comparison maps in the case of
   moving Lagrangian boundary conditions - as described in
   \S\ref{subsubsec:movingbdry} combined with yet another application
   of the compactness Lemma \ref{lem:open-mapping}.
 
   We exemplify the argument to prove independence with respect to
   $f$.  Thus assume that $f$ and $f'$ are so that $(H,f),
   (H,f')\subset \mathcal{H}(\overline{W},\overline{W}')$ and
   $[f]=[f']$. We also pick a compact set $B\subset \R^{2}$ as well as
   a generic $\widetilde{\mathbf{J}}\in\widetilde{\mathcal{J}}_{B}$.
   Let $\nu:\R\to [0,1]$ be an increasing $C^{\infty}$ function so
   that $\nu(\tau)=0$ for $\tau\leq 0$ and $\nu(\tau)=1$ for $\tau\geq
   1$.  Define $f_{\tau}=\nu(\tau) f +(1-\nu(\tau))f'$,
   $f_{\tau}:\R^{2}\to \R$, $\tau\in \R$.  Let $e_{\tau}=f_{\tau}\circ
   \pi$.  Denote by
   $\overline{W}_{\tau}=\phi^{e_{\tau}}_{1}(\overline{W})$.
   Therefore, $\overline{W}_{\tau}=\phi^{f'\circ
     \pi}_{1}(\overline{W})$ for $\tau\leq 0$ and
   $\overline{W}_{\tau}=\phi^{f\circ \pi}_{1}(\overline{W})$ for
   $\tau\geq 1$.  We now define a morphism:
   $$\psi: CF(\phi^{f'\circ \pi}_{1}(\overline{W}), 
   \overline{W}';H;\widetilde{\mathbf{J}})\to CF(\phi^{f\circ
     \pi}_{1}(\overline{W}), \overline{W}';
   H;\widetilde{\mathbf{J}})$$ by a sum like in Equation
   (\ref{eq:moving-brdy-sum}) running over the elements of zero
   dimensional moduli spaces consisting of finite energy solutions to
   Floer's equation (\ref{eq:Floer-equation}) subject to the boundary
   conditions
   \begin{equation}\label{eq:bdry-cond}
      u(0,s)\in \overline{W}_{s} \ , \ \ u(1,s)\in \overline{W}'
      \ \ \forall s\in\R ~.~
   \end{equation}
   The only difficulty in checking that this morphism is well-defined
   and satisfies the expected properties in standard Floer theory
   (i.e.  it induces a canonical isomorphism in homology as
   in~\S\ref{subsubsec:movingbdry}) is to insure that the moduli
   spaces of finite energy Floer trajectories with moving boundary
   conditions as above satisfy the usual compactness properties.  But
   this follows immediately by noticing that, because $[f]=[f']$ we
   have that $\overline{W}_{\tau}$ and $\overline{W}'$ are
   cylindrically distinct at infinity for all $\tau\in \R$. This
   implies that Lemma \ref{lem:open-mapping} can still be applied and
   it shows that the image of a finite energy solution of equation
   (\ref{eq:Floer-equation}) subject to (\ref{eq:bdry-cond}) is either
   constant or it has its image contained in a compact set $K\subset
   \widetilde{M}$ that contains the support of $H$ and whose
   projection on $\R^{2}$ contains $B$ as well as the rectangle
   $[R_{-}-3, R_{+}+3]\times [a, b]$ where $a <
   a^{\pm}_{i}-\epsilon^{\pm}_{i}$ and
   $b>a^{\pm}_{i}-\epsilon^{\pm}_{i}$ for all $i$.
     
   The argument showing invariance with respect to horizontal
   isotopies is similar.
\end{proof}

\subsection{Non-existence of certain holomorphic strips and the proof
  of Theorem~\ref{cor:exact-tri-explicit}} \label{sb:certain-strips}

We now construct a particular family of cobordisms. Let $a \geq 0$,
$q, r, s \in \mathbb{R}$.  Consider a smooth function
$\sigma_{a;q,r,s}: \R\to \R$, with the following properties:
\begin{itemize}
  \item[i.] $\sigma_{a;q,r,s}(t)= q$ for $t\leq -a$,
   $\sigma_{a;q,r,s}(t)=s$ for $t\geq 3$.
  \item[ii.]$\sigma_{a;q,r,s}(t)=r$ for $t\in [-a+1,2]$.
  \item[iii.] $\sigma_{a;q,r,s}$ is strictly monotone on $(-a,-a+1)$
   and strictly monotone on $(2,3)$.
\end{itemize}
We denote by $\gamma_{a;q,r,s} \subset \mathbb{R}^2$ the graph of
$\sigma_{a;q,r,s}$. See Figure~\ref{fig:graph}.

\begin{figure}[htbp]
   \epsfig{file=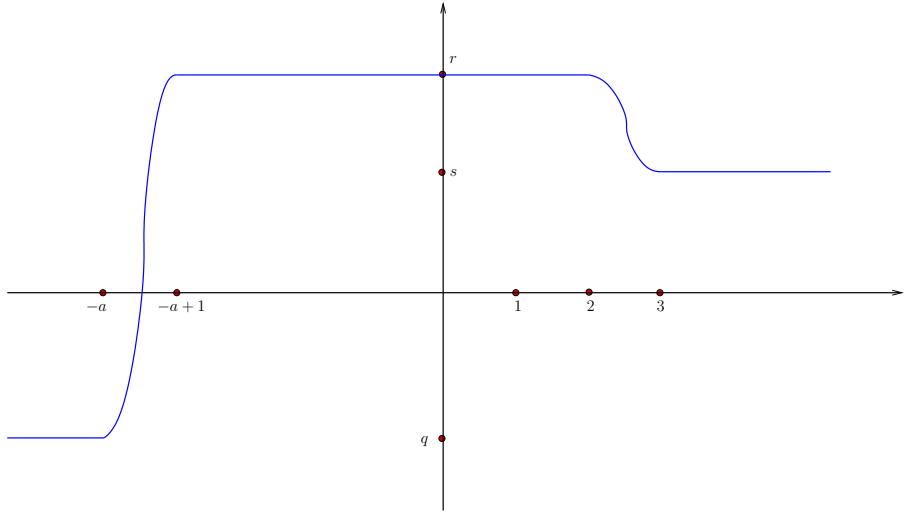, width=0.75\linewidth}
   \caption{The graph of $\sigma_{a;q,r,s}$. \label{fig:graph}}
\end{figure}

Let $V: L \cobto (L_1, \ldots, L_k)$ be a cobordism with one positive
end and let $N\subset M$ be a Lagrangian in $M$. We assume that $N$ is
transverse to $L$ as well as to $L_{i}$ for all $1\leq i \leq k$. By a
possible isotopy of $\overline{V}$ we may assume that $$\overline{V}
\subset \mathbb{R} \times [0,k] \times M, \quad \overline{V}|_{[1,
  \infty)\times \mathbb{R}} = [1,\infty)\times \{1\} \times L$$ and
that for some large enough $a>2$ we have
$$\overline{V}|_{(-\infty, -a+2] \times \mathbb{R}} = \coprod_{i=1}^k
(-\infty, -a+2] \times \{i\} \times L_i.$$ We now consider two
Lagrangians in $\mathbb{R}^2 \times M$:
$$N^{\wedge} = \gamma_{a; -1, k+1, 2} \times N, \quad 
N^{\vee} = \gamma_{a; -1, -2, 2} \times N.$$ The intersections
between these Lagrangians and $\overline{V}$ is:
\begin{align*}
   & N^{\wedge} \cap \overline{V} = \bigcup_{i=1}^k \{(q_i, i)\} \times 
   (N \cap L_i), \;\;\; \textnormal{where } q_i \in (-a, -a+1), \, 
   \sigma_{a; -1, k+1, 2} = i, \\
   & N^{\vee} \cap \overline{V} = \{(p,1)\} \times (N \cap L), \;\;\; 
   \textnormal{where } p \in (2,3), \, \sigma_{a; -1, -2, 2}(p)=1.
\end{align*}

It is easy to see that Theorem \ref{cor:exact-tri-explicit} is a
consequence of the following lemma.

\begin{lem}\label{cor:Floer-structure2}
   There exist (time dependent) almost complex structures
   $\widetilde{\mathbf{J}} = \{\widetilde{J_t}\}_{t \in [0,1]}$ on
   $\mathbb{R}^2 \times M$ with the following properties:
   \begin{enumerate}
     \item For every $t$, $\widetilde{J_t}$ is compatible with
      $\omega_{\mathbb{R}^2} \oplus \omega$.
     \item For every t, $\pi$ is $(\widetilde{J_t}, i)$-holomorphic on
      $(\mathbb{R}^2 \times M) \setminus ([-a+1,2]\times [-K,K] \times
      M)$ (for a large positive constant $K$). Here $i$ is the
      standard complex structure on $\mathbb{R}^2 \cong \mathbb{C}$.
     \item The Floer complexes $CF(N,L_{i};\mathbf{J}^i)$, $i=1,
      \ldots, k$, $CF(N,L;\mathbf{J}^0)$,
      $CF(N^{\wedge},\overline{V};\widetilde{\mathbf{J}})$,
      $CF(N^{\vee}, \overline{V}; \widetilde{\mathbf{J}})$ are all
      well defined, where
      $\mathbf{J}^i=\widetilde{\mathbf{J}}|_{\{(q_{i}, i)\} \times
        M}$, $\mathbf{J}^0=\mathbf{J}|_{\{(p,1)\} \times M}$.
   \end{enumerate}
   Moreover, $CF(N^{\vee},\overline{V})= CF(N,L)$ and there is a chain
   homotopy-equivalence
   $$\bar{\phi}_{V}^{N}:CF(N^{\vee},\overline{V}) \longrightarrow
   CF(N^{\wedge}, \overline{V})$$ implied by the fact that $N^{\vee}$
   and $N^{\wedge}$ are horizontally isotopic. The complex
   $CF(N^{\wedge},\overline{V})$ has the form:
   \begin{equation}\label{eq:traing-form}CF(N^{\wedge},
      \overline{V})=\Bigl(CF(N,L_{1})[-s_1]
      \oplus CF(N,L_{2})[-s_2] \oplus\cdots \oplus CF(N,L_{k})[-s_k],
      D\Bigr)
   \end{equation} with the differential given by an upper triangular matrix
   $D = (D_{ij})$ whose diagonal entries $D_{ii}$ are up to sign the
   differentials of the complex $CF(N, L_{i})$, and the indexes
   $s_{i}\in \Z$ are independent of $N$. (See figure~\ref{fig:cones}.)
\end{lem}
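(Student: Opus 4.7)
The strategy is to construct $\widetilde{\mathbf{J}}$ so that $\pi$ is pseudo-holomorphic outside a box enclosing both the non-cylindrical part of $\overline{V}$ and the middle ``bump'' of the curves $\gamma_{a;\cdots}$, to use the compactness Lemma \ref{lem:open-mapping} to set up the Floer complexes, and then to exploit the planar geometry, combined with open-mapping and Stokes arguments, to pin down the differential.

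\emph{Choice of data.} I would pick $\widetilde{J_{t}}$ of split form $j\oplus J_{t}^{M}$ outside $[-a+1,2]\times[-K,K]\times M$ (with $K$ large enough that this rectangle contains the projection of the non-cylindrical part of $\overline{V}$), and freely perturb $\widetilde{J_{t}}$ inside this box, keeping $\widetilde{\omega}$-compatibility, to achieve transversality of the Floer moduli spaces for both pairs $(N^{\vee},\overline{V})$ and $(N^{\wedge},\overline{V})$. Every slice $\widetilde{J_{t}}|_{\{(q_{i},i)\}\times M}$ and $\widetilde{J_{t}}|_{\{(p,1)\}\times M}$ coincides with a generic family $J_{t}^{M}$ on $M$, so the complexes $CF(N,L_{i};\mathbf{J}^{i})$ and $CF(N,L;\mathbf{J}^{0})$ are well defined in the usual way. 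Both $N^{\vee},\overline{V}$ and $N^{\wedge},\overline{V}$ are cylindrically distinct at infinity by construction, so the needed compactness of the Floer moduli spaces follows from Lemma \ref{lem:open-mapping}, and $CF(N^{\vee},\overline{V};\widetilde{\mathbf{J}})$, $CF(N^{\wedge},\overline{V};\widetilde{\mathbf{J}})$ are defined by Proposition \ref{prop:invariance-Fl}.

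\emph{Identification $CF(N^{\vee},\overline{V})=CF(N,L)$ and the horizontal isotopy.} The intersection $N^{\vee}\cap\overline{V}$ sits entirely in $\{p\}\times\{1\}\times(N\cap L)$ with $p>2$, outside the box. For any Floer strip $u$ between two such generators, $v=\pi\circ u$ is a bounded holomorphic map whose boundary lies on $\gamma_{a;-1,-2,2}$ (at $t=0$) and on $\pi(\overline{V})$ (at $t=1$), and whose asymptotes at $s=\pm\infty$ are both $(p,1)$. Near $(p,1)$ the curve $\gamma_{a;-1,-2,2}$ meets the horizontal end of $\pi(\overline{V})$ transversely only at $(p,1)$ itself, so by the open mapping theorem $v\equiv(p,1)$; hence $u$ lies in the slice $\{(p,1)\}\times M$ and is precisely a Floer strip in $M$ for $(N,L)$. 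This gives a tautological equality $CF(N^{\vee},\overline{V})=CF(N,L;\mathbf{J}^{0})$. Since $\gamma_{a;-1,-2,2}$ and $\gamma_{a;-1,k+1,2}$ agree outside a compact set, there is a compactly supported planar Hamiltonian isotopy $\psi_{t}$ carrying one to the other; its lift $\psi_{t}\times\id_{M}$ is horizontal with respect to $N^{\vee}$ in the sense of Definition \ref{d:isotopies}, and Proposition \ref{prop:invariance-Fl} produces the chain-homotopy equivalence $\bar{\phi}_{V}^{N}$.

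\emph{Triangular structure of $CF(N^{\wedge},\overline{V})$.} On the module level the generators split as $\bigsqcup_{i=1}^{k}\{(q_{i},i)\}\times(N\cap L_{i})$, giving the direct sum decomposition of \eqref{eq:traing-form}; the integer shifts $s_{i}$ are read off from the local Maslov-type data at the transverse intersection of $\gamma_{a;-1,k+1,2}$ with $\R\times\{i\}$ at $q_{i}$, which depends only on $V$, not on $N$. The core claim is that the differential $D$ is upper triangular with $D_{ii}$ equal, up to sign, to the Floer differential of $CF(N,L_{i};\mathbf{J}^{i})$. For a Floer strip $u$ with input at level $j$ and output at level $i$, the projection $v=\pi\circ u$ is holomorphic outside the box, has boundary on $\gamma_{a;-1,k+1,2}$ and on $\pi(\overline{V})$, and limits to $(q_{j},j)$ and $(q_{i},i)$. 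Closing off $\partial v$ and applying Stokes to $\alpha=x\,dy$, one checks that the horizontal boundary segments give no contribution (since $dy=0$ there), while the monotone piece of $\gamma$ between $q_{i}$ and $q_{j}$ and the core-cobordism piece joining $(-a+2,i)$ to $(-a+2,j)$ yield a signed area which is strictly negative when $i>j$. This contradicts the non-negativity of $\int v^{*}\omega_{\R^{2}}$ forced by holomorphicity (combined with the open mapping principle inside the box, which constrains $v$ to one of finitely many topological types à la Chekanov), so no strips exist for $i>j$. When $i=j$ the signed area vanishes and $v$ must be constant at $(q_{i},i)$, so by Lemma \ref{lem:open-mapping} $u$ lies entirely in the slice $\{(q_{i},i)\}\times M$ and is a $\mathbf{J}^{i}$-holomorphic Floer strip for $(N,L_{i})$, yielding the diagonal blocks. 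The main obstacle is this planar area analysis: it has to accommodate the fact that $v$ may enter the box where $\widetilde{J}$ is not split and $\pi(\overline{V})$ is two-dimensional, so the signed-area bound must be supplemented with a combinatorial classification of the possible topological types of $v$.
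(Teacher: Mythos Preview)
Your setup, the identification $CF(N^{\vee},\overline{V})=CF(N,L)$, and the horizontal-isotopy argument for $\bar\phi_V^N$ are all fine and match the paper. The divergence is in the proof of upper-triangularity of $D$, and the obstacle you yourself flag is a real gap: inside the box $\pi(\overline{V})$ is two-dimensional, so the $t=1$ boundary of $v=\pi\circ u$ does not lie on a curve and there is no canonical way to ``close off'' $\partial v$ to run Stokes; worse, $v$ is not holomorphic over the box, so $\int v^*\omega_{\mathbb{R}^2}$ has no sign there. Your proposed fix (a combinatorial classification of topological types of $v$) would amount to redoing the whole analysis.

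The paper bypasses this entirely with an argument that never looks inside the box. The point is that the $t=0$ boundary curve $\gamma^{\wedge}=\gamma_{a;-1,k+1,2}$ lies wholly in the region where $\pi$ is holomorphic, so $v$ satisfies the Cauchy--Riemann equation in a neighbourhood of $\mathbb{R}\times\{0\}$. Orient $\gamma^{\wedge}$ from right to left and write $\partial_s v(s,0)=C(s)\,\vec\xi(v(s,0))$. If $C(s_0)<0$ for some $s_0$, then $\partial_t v(s_0,0)=i\,\partial_s v(s_0,0)$ points into the region $\mathcal{U}$ above $\gamma^{\wedge}$, so the image of $v$ meets $\mathcal{U}$. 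But every connected component of $\mathcal{U}\setminus\bigcup_l\gamma_l^-$ is unbounded and $v$ is holomorphic there with boundary values excluded from it; the open-mapping argument (your Lemma~\ref{lem:open-mapping} / Remark~\ref{rem:open-domain}) then forces the image of $v$ to contain such an unbounded component, a contradiction. Hence $C(s)\geq 0$ for all $s$, which immediately gives $j\leq i$. When $j=i$ one gets $v(s,0)\equiv(q_i,i)$, and since $v$ is holomorphic near $\mathbb{R}\times\{0\}$ it is constant, so $u$ sits in the fibre and is a genuine $(N,L_i)$-strip. No area computation, no analysis inside the box.
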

\begin{rem} Even if we work here in a non-graded context we felt
   useful to include degrees in the formulas above so that the
   statement remains true in a graded context assuming additional
   assumptions on the Lagrangians involved.
\end{rem}
\begin{figure}[htbp]
   \epsfig{file=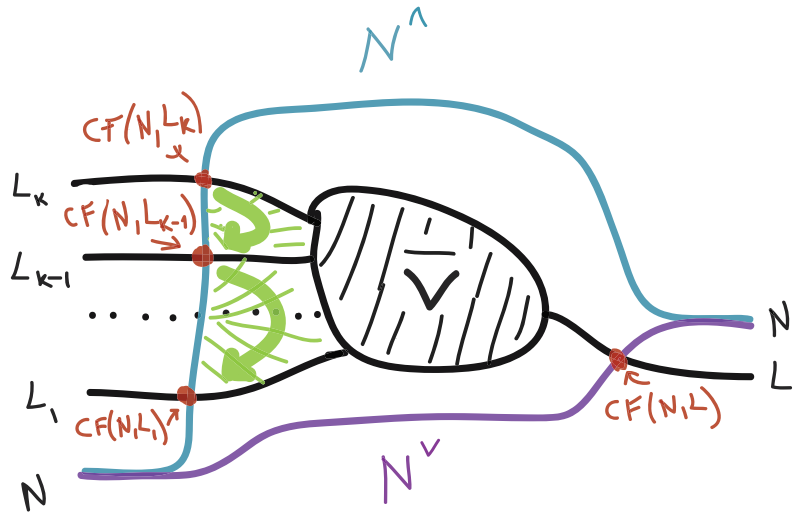, width=0.65\linewidth}
   \caption{The cobordisms $V$, $N^{\wedge}$ and $N^{\vee}$ together
     with, in {\color{green} green}, some of the
     $\widetilde{\mathbf{J}}$-holomorphic strips relevant for the
     iterated cone structure (everything projected to
     $\mathbb{R}^2$).} \label{fig:cones}
\end{figure}

\begin{proof}[Proof of Lemma~\ref{cor:Floer-structure2}]
   Finding an almost complex structure so that all the Floer complexes
   involved are well-defined and with the required properties with
   respect to the projection is standard.  In view of
   Proposition~\ref{prop:invariance-Fl} we only need to show that the
   form of the differential $D$ is as claimed and the existence of the
   chain homotopy equivalence $\bar{\phi}_{V}^{N}$.

   We start with the differential $D$. For this we first return to the
   setting of~\S\ref{subsec:compact}, which in the notation of our
   Lemma reads as follows: $\overline{V}$ is a Lagrangians in
   $\widetilde{M}$ with cylindrical ends and $\widetilde{\mathbf{J}}
   \in \widetilde{\mathcal{J}}_{B}$ for some compact set $B \subset
   \R^{2}$. We also recall the particular choices for the ends of
   $\overline{V}$ from~\S\ref{subsec:compact}.  Namely, outside $B$
   this cobordism coincides - for the negative ends - with products
   $\gamma^{-}_{i} \times L^-_{i}$ between certain planar curves
   $\gamma^-_{i}$ and Lagrangians $L_{i}\subset M$ and similarly for
   the positive end, it is a product $\gamma^+ \times L$ with a
   Lagrangian $L \subset M$ for some curve $\gamma^+ \subset \C$, as
   depicted in Figure~\ref{fig:cones}. We also assume the
   transversality conditions mentioned in~\S\ref{subsec:compact}.

   We will also need the following notation. Denote by
   $\gamma^{\wedge} \subset \mathbb{C}$ the curve corresponding to the
   cylindrical cobordism $N^{\wedge}$ in Figure~\ref{fig:cones}. We
   view $\gamma^{\wedge}$ as a (non-compact) $1$-dimensional
   submanifold of $\mathbb{C}$ and we orient it by going along
   $\gamma^{\wedge}$, starting from the right-hand side of
   Figure~\ref{fig:cones} and ending at its left-hand side. Fix also a
   non-vanishing vector field $\vec{\xi}(z) \in T_z(\gamma^{\wedge})$,
   $z \in \gamma^{\wedge}$, representing this orientation. The curve
   $\gamma^{\wedge}$ separates $\mathbb{C}$ into two connected
   components. We denote by $\mathcal{U}$ the component lying
   ``above'' $\gamma^{\wedge}$ (i.e.  $\mathcal{U}$ is on the
   ``right'' of $\gamma^{\wedge}$ with respect to this orientation).

   Consider now the Floer complex $CF(N^{\wedge}, \overline{V};
   \widetilde{\mathbf{J}})$. Note that the generators of this complex
   are of the form $(x, p)$ with $p \in N \cap L_i$ and $x \in
   \gamma^{\wedge} \cap \gamma^-_i$, $1 \leq i \leq k$. Let
   $u:\mathbb{R} \times [0,1] \longrightarrow \widetilde{M}$ be a
   Floer trajectory, contributing to the differential of this complex,
   connecting $(x,p)$ to $(y,q)$ with $x \in \gamma^{\wedge} \cap
   \gamma^-_i$ and $y \in \gamma^{\wedge} \cap \gamma^-_j$. We have to
   show that $j \leq i$ and moreover if $j=i$ then $u$ is of the form
   $u(s,t) = (x, u'(s,t))$ with $u'(s,t)$ a Floer trajectory of $CF(N,
   L_i; \mathbf{J}^i)$.

   In order to prove this, put $v = \pi \circ u: \mathbb{R} \times
   [0,1] \longrightarrow \mathbb{C}$. Note that $v$ is holomorphic
   over $\mathbb{C} \setminus B$ (i.e. $v|_{(\mathbb{R} \times [0,1])
     \setminus v^{-1}(B)}$ is holomorphic), where we use here the
   standard complex structures on $\mathbb{R} \times [0,1]$ and on
   $\mathbb{C}$, both denoted by $i$. \RB{In the proof we will use the
     following elementary consequence of the open mapping theorem:
     \begin{rem}\label{rem:open-domain} Let $v:\mathbb{R} \times [0,1]
        \to \C$ be a continuous map and $U \subset \mathbb{C}$ an open
        connected subset. Suppose that:
        \begin{itemize}
          \item[i.] $\textnormal{image\,}(v) \cap U \neq \emptyset$
           and moreover $v$ is holomorphic over $U$.
          \item[ii.] The limits $p=\lim_{s\to +\infty} v(s,t)$ and
           $q=\lim_{s\to -\infty} v(s,t)$ both exist, are outside of
           $U$ and $v(\mathbb{R} \times [0,1])\cup \{p,q\}\subset \C$
           is compact.
          \item[iii.] $v(s,0)\not\in U$, $v(s,1)\not\in U$ for all $s\in
           \R$.
        \end{itemize}
        Then the image of $v$ contains $U$.  In particular, it is not
        possible for $U$ to be unbounded.

        Indeed, condition ii implies that the set $U'=v(\mathbb{R}
        \times [0,1]) \cap U$ is closed in $U$.  Condition iii
        together with the open mapping theorem implies that $U'$ is
        also open in $U$.  As $U$ is connected we deduce that $U'=U$.
     \end{rem}}
   
   We now return to the proof of the Lemma.  Note that $v(s,0) \in
   \gamma^{\wedge}$ for every $s \in \mathbb{R}$. Thus our statement
   would follow if we prove that $\partial_s v(s,0)$ points in the
   same direction as $\vec{\xi}(v(s,0))$ for every $s$. More
   precisely, we have to show that if we write $\partial_s v(s,0) =
   C(s) \vec{\xi}(v(s,0))$ for some $C(s) \in \mathbb{R}$ then
   $C(s)\geq 0$ for every $s$, and moreover if $\partial_s v(s,0) = 0$
   for every $s$, then $v$ is the constant map with value $x$.

   To prove this, suppose by contradiction that $C(s_0) < 0$ for some
   $s_0$. As $\gamma^{\wedge}$ is disjoint from $B$ we have
   $\partial_s v(s,0) + i \partial_t v(s,0)=0$, hence $\partial_t
   v(s,0) = C(s) i \vec{\xi}(v(s,0))$.  As $C(s_0)<0$ it follows that
   $\partial_t v(s_0,0)$ points towards $\mathcal{U}$. This implies
   that the image of $v$ intersects $\mathcal{U}$ and as $v$ is
   holomorphic over $\mathcal{U}$ we also have that the image of $v$
   must intersect $\mathcal{U} \setminus \bigcup_{l=1}^k \gamma^-_l$.
   Notice that due to the boundary conditions imposed to $u$ we know
   that $v$ sends $\{0,1\}\times \R$ away from $\mathcal{U} \setminus
   \bigcup_{l=1}^k \gamma^-_l$.  As all connected components of
   $\mathcal{U}\setminus\bigcup_{l=1}^k \gamma^-_l$ are unbounded,
   this contradicts Remark \ref{rem:open-domain} and thus completes
   the proof that $C(s)\geq 0$ for every $s$, hence also proves that
   $j \leq i$.


   If $j=i$ the above proof shows that $v(s,0) \equiv x$ for every
   $s$.  As $v$ is holomorphic near $\mathbb{R} \times \{0\}$ it
   follows that $v$ is constant. Thus $u(s,t) = (x, u'(s,t))$ and it
   is easy to see that $u'$ is a Floer trajectory for $CF(N, L_i;
   \mathbf{J}^i)$. This completes the proof that the differential $D$
   is upper triangular and that the diagonal elements have the form
   claimed.

   The existence of the chain homotopy equivalence
   $\bar{\phi}^{N}_{V}$ results from the invariance of the Floer
   homology for cylindrical Lagrangians with respect to horizontal
   isotopies.
\end{proof}

\begin{rem} \label{r:traj} Slight variations on the \RB{argument} in
   the proof of Lemma~\ref{cor:Floer-structure2} can be used to
   restrict the type of Floer trajectories in various similar
   situations, such as the ones depicted in
   Figures~\ref{fig:CobordismsFloerStrips},~\ref{fig:CobTwoEnds5}.
\end{rem}

Using Lemma~\ref{cor:Floer-structure2} it is a simple exercise in
homological algebra to use the components of the differential $D$ to
identify the complexes $K_{i}$ as well as the maps $m_{i}$ and $h$.
To finish the proof of Theorem \ref{cor:exact-tri-explicit} we also
need to notice that these maps are each unique up to chain homotopy
and multiplication with some $T^{a}\in\mathcal{A}$.  It is enough for
this to understand the reasoning for the chain map
$\bar{\phi}_{V}^{N}$ as the same argument applies to the $m_{i}$'s.
We shorten $\bar{\phi}= \bar{\phi}_{V}^{N}$.  From Proposition
\ref{prop:invariance-Fl} we deduce that as long as $N^{\wedge}$ and
$N^{\vee}$ are kept fixed, then the resulting $\bar{\phi}$ is unique
up to chain homotopy. However, $N^{\wedge}$ and $N^{\vee}$ are not
unique, they depend on the choice of the functions
$\sigma_{a;q,r,s}(t)$. For a different choice of such functions we
have the Lagrangians $N^{\wedge}_{1}$ and $N^{\vee}_{1}$ - that can be
assumed horizontally isotopic to $N^{\wedge}$ and $N^{\vee}$
respectively - and a resulting chain isomorphism $\bar{\phi}_{1}$.
Again by the invariance claim in Proposition \ref{prop:invariance-Fl},
we deduce that $\bar{\phi}_{1}\circ i^{\vee}$ is chain homotopic to
$i^{\wedge} \circ\bar{\phi}$ where $i^{\vee}: CF(N^{\vee},V)\to
CF(N^{\vee}_{1}, V)$ and $i^{\wedge}: CF(N^{\wedge},V)\to
CF(N^{\wedge}_{1}, V)$ are moving boundary conditions comparison maps.
Now the key point here is that the map $i^{\vee}$ is not the identity
via the identification $CF(N^{\vee},V)= CF(N,L) = CF(N^{\vee}_{1},V)$.
Rather, it is multiplication with some $ T^{a}\in\mathcal{A}$ where
$a$ takes into account the energy of the Hamiltonian moving $N^{\vee}$
to $N^{\vee}_{1}$ - see (\ref{eq:moving-brdy-sum}). The same thing
happens for the restrictions of $i^{\wedge}$ to $CF(N,L_{i})$. This
shows that up to this ambiguity given by multiplication with some
$T^{a}\in\mathcal{A}$ the relevant maps are chain homotopic and
concludes the proof of Theorem \ref{cor:exact-tri-explicit}.

\section{Quantum homology and the proofs of
  Theorems~\ref{thm:quantum_h} and~\ref{t:2-end-split}}
\label{sec:quantum}
The arguments in this section use the machinery developed in the last
section together with some specific properties of quantum homology
again adapted to the case of Lagrangians with cylindrical ends.  An
important additional ingredient in these proofs is the homological
injectivity induced by the inclusion $\La\to \mathcal{A}$ as proved in
Lemma \ref{lem:alg-inclusion}.

\subsection{Quantum homology for Lagrangians with cylindrical ends}
\label{sbsb:PSS-cob}

We first discuss the definition of quantum homology in this context and
then will see how the PSS-type comparison morphisms between quantum
homology and Floer homology (recalled in \S\ref{sbsb:pss}) adapt to
this setting. 

Let $\overline{W} \subset \widetilde{M}$ be a monotone Lagrangian with
cylindrical ends and let $S$ be a union of some of its ends. In other
words, assume the ends of $\overline{W}$ are
$$E^{-}_{R_-}(\overline{W}) = 
\coprod_{j=1}^{k_-} (-\infty, R_-] \times \{a^-_j\} \times L^{-}_j,
\qquad E^+_{R_+}(\overline{W}) = \coprod_{i=1}^{k_+} [R_+, \infty)
\times \{a^+_i\} \times L^{+}_i$$ then $$S=\bigcup_{j\in J_{-}}
\{a_{j}^{-}\}\times L^{-}_{j}\cup \bigcup_{i\in J_{+}}
\{a_{i}^{+}\}\times L^{+}_{i}$$ where $J_{-}\subset \{1,\ldots,
k_{-}\}$ and $J_{+}\subset\{ 1,\ldots, k_{+}\}$.

The quantum homology $QH(\overline{W}, S)$ is defined as follows.  Fix
$\epsilon>0$ and put $W= \overline{W}|_{[R_- - \epsilon, R_+ +
  \epsilon] \times \mathbb{R}}$, so that $W$ is a compact manifold
with boundary
$$\partial W = \Bigl(\coprod_{j=1}^{k_-} \{(R_{-}-\epsilon, a^{-}_j)\} 
\times L_j^-\Bigr) \; \coprod \; \Bigl(\coprod_{i=1}^{k_+}
\{(R_{+}+\epsilon, a^{+}_i)\} \times L_i^+\Bigr)$$
Let $S'$ be the part of the boundary of $W$ that corresponds to $S$: 
$$S' = \Bigl(\coprod_{j\in J_{-}} \{(R_{-}-\epsilon, a^{-}_j)\} 
\times L_j^-\Bigr) \; \cup \; \Bigl(\coprod_{i\in
  J_{+}}\{(R_{+}+\epsilon, a^{+}_i)\} \times L_i^+\Bigr)$$ Choose a
Morse function $\widetilde{f}: W \longrightarrow \mathbb{R}$ together
with a Riemannian metric $(\cdot, \cdot)$ and an almost complex
structure $\widetilde{J}$ on $\widetilde{M}$.  We require the function
$\widetilde{f}$ to be so that its negative gradient $-\nabla
\widetilde{f}$ is transverse to $\partial W$ and moreover it points
outside of $W$ along $S'$ and inside $W$ along $\partial W \setminus
S'$.  We also require $\widetilde{J}$ to be so that the projection
$\pi$ is holomorphic outside a compact set $K\subset [R_- -
\epsilon/2, R_+ + \epsilon/2] \times \mathbb{R}\times M$.  Denote by
$\mathscr{D}_{S}=(\widetilde{f}, (\cdot, \cdot), \widetilde{J})$ our
data.

\begin{prop} \label{prop:quantum-relative} If the data
   $\mathscr{D}_{S}$ is generic, then the pearl complex $
   \mathcal{C}(\mathscr{D}_{S})$ is well-defined by the same
   construction as the one recalled in ~\S\ref{sbsb:lag-qh}.  The
   resulting quantum homology does not depend, up to canonical
   isomorphism, on the choice of data $\mathscr{D}_{S}$ nor on the
   choice of $\epsilon$ and $R_{+}$, $R_{-}$ above. We denote the
   resulting homology by $QH(\overline{W},S)$.
\end{prop}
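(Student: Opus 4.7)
The plan is to adapt the pearl-complex construction of Lagrangian quantum homology, as developed in the references cited in Section~\ref{sb:rev-HF}, to the cylindrical-end setting. Two new features need attention: compactness of $\widetilde{J}$-holomorphic disk moduli in the non-compact ambient $\widetilde{M}$, and the in/out-flow boundary conditions on $\widetilde{f}$ along $\partial W = S' \cup (\partial W \setminus S')$. The generators of $\mathcal{C}(\mathscr{D}_S)$ will be the critical points of $\widetilde{f}$ lying in the interior of $W$, tensored with $\Lambda$, and the differential will count pearly trajectories built from negative gradient flow segments of $\widetilde{f}$ and $\widetilde{J}$-holomorphic disks on $\overline{W}$. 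Compactness of the disk moduli is controlled by Lemma~\ref{lem:open-mapping}: a $\widetilde{J}$-holomorphic disk with boundary on $\overline{W}$ either has non-constant projection to $\mathbb{R}^2$, forcing its image into $B' \times M$ and hence into a fixed compact region inside $W \times M$, or has constant projection and sits in a single fiber $\{z\}\times M$, in which case it can attach to flow lines of $\widetilde{f}$ only when $z$ lies in the truncation region. The Morse-theoretic setup matches the standard description of the complex computing $H_*(W, S')$: flow lines between interior critical points either remain in $W$ or escape through $S'$, and the latter are simply not counted. Combined with the standard monotone bubbling analysis (valid since $N_L \geq 2$) and transversality via generic perturbations, this yields a well-defined chain complex with $d^2 = 0$ by the usual analysis of one-parameter pearly moduli spaces.

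For invariance under $\mathscr{D}_S$ I would employ the standard continuation argument: given two generic data sets, interpolate along a generic path to define a chain morphism via parameterized pearly trajectories, and construct a chain homotopy from a two-parameter interpolation; compactness is again controlled by Lemma~\ref{lem:open-mapping}, now applied to the parameterized moduli. Independence of $\epsilon$ and of the cut-off levels $R_{\pm}$ is then handled by enlarging the truncation to some $W_1 \supset W$, extending $\widetilde{f}$ while preserving the in/out-flow conditions on the enlarged boundary, and running an analogous continuation argument. The crucial observation is that every non-constant disk contributing to either pearl complex is forced by Lemma~\ref{lem:open-mapping} to have image in a fixed bounded region independent of the truncation, so that the two pearl complexes share all the relevant holomorphic data and the induced comparison morphism is a quasi-isomorphism.

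The main technical obstacle is the transversality package: one must ensure that generic choices of $(\widetilde{f}, (\cdot,\cdot), \widetilde{J})$ render all relevant moduli spaces of pearly trajectories, and their parameterized analogues used in the invariance proofs, smooth of the expected dimension. The complication is that $\widetilde{J}$ is constrained to make $\pi$ holomorphic outside the compact set $K$, so only $K$-supported perturbations are available; fortunately Lemma~\ref{lem:open-mapping} implies that every non-trivial disk visits the interior region where $K$ lies, so the available perturbation freedom is sufficient to invoke the transversality scheme used in the earlier pearl-complex literature. The in/out-flow behavior of $-\nabla \widetilde{f}$ introduces no exotic boundary stratum in the relevant one-parameter families, since escape trajectories through $S'$ are disregarded in the count rather than contributing new limit configurations.
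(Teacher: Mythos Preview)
Your proposal is correct and follows the same strategy as the paper: compactness of the disk moduli is controlled by Lemma~\ref{lem:open-mapping}, the boundary behaviour of $-\nabla\widetilde{f}$ handles the flow-line segments, and then the standard pearl-complex machinery applies. The paper's own proof is in fact much terser than yours --- it records exactly these two observations and declares the rest routine.

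One conceptual sharpening is worth making. You phrase the boundary issue as ``flow lines between interior critical points either remain in $W$ or escape through $S'$, and the latter are simply not counted,'' and later justify the absence of exotic boundary strata by saying escape trajectories are ``disregarded.'' The cleaner statement, which is what the paper actually uses, is that a pearly trajectory connecting two critical points of $\widetilde{f}$ \emph{cannot reach $\partial W$ at all}: a flow segment that touched $S'$ would exit and never return (so could not end at an interior critical point), it cannot exit through $\partial W\setminus S'$, and every disk has boundary in the interior (non-constant-projection disks by Lemma~\ref{lem:open-mapping}, constant-projection disks because their base point is reached by an interior flow segment). Thus there is nothing to disregard --- the relevant moduli spaces are confined to a compact subset of the interior of $W$, and the usual arguments for $d^2=0$ and for invariance via continuation go through verbatim with no new boundary phenomena to analyse.
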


Similarly to the conventions in \S\ref{sbsb:lag-qh} we will denote by
$QH(\overline{W},S;\mathcal{A})$ the homology of the complex
$\mathcal{C}(\mathscr{D}_{S})\otimes_{\La} \mathcal{A}$.

\begin{proof} Recall that the relevant
   pearly trajectories are composed of flow lines of $-\nabla
   \widetilde{f}$ and $\widetilde{J}$-holomorphic disks. By Lemma
   \ref{lem:open-mapping} and our assumption on $\widetilde{J}$,
   there are no pseudo-holomorphic disks with boundary on
   $\overline{W}$ with non-constant projection to $\mathbb{R}^2$ that
   reach the complement of $K$.  In view of the fact that $-\nabla
   \widetilde{f}$ is transverse to $\partial W$ we deduce that all
   pearly trajectories that originate and end at critical points of
   $\widetilde{f}$ can not reach the boundary of $W$.  This
   immediately implies that the complex $\mathcal{C}(\mathscr{D}_{S})$
   is well defined and indeed a chain complex. The same argument also
   applies to show the rest of the statement.
\end{proof}

 The following lemma will be useful later in the paper.

\begin{lem}\label{lem:exact}
   Assume $\overline{W}$ is as in Proposition
   \ref{prop:quantum-relative}.  Pick a union of some of the ends of
   $\overline{W}$ and denote it by $A$.  Take also another union 
   $B$ of some of the ends of $\overline{W}$ so that $A\cap B=\emptyset$.
   There is a long exact sequence:
   \begin{equation}\label{eq:long-qu-rel}
      \to QH_{\ast}(A)\to QH_{\ast}(\overline{W}, B)
      \to QH_{\ast}(\overline{W},A\cup B)
      \to QH_{\ast-1}(A)\to ~.~
   \end{equation}
\end{lem}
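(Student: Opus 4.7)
The plan is to realize~\eqref{eq:long-qu-rel} as the long exact sequence associated to a short exact sequence of pearl complexes
$$0\to\mathcal{C}_A\to\mathcal{C}(\mathscr{D}_B')\to\mathcal{C}(\mathscr{D}_{A\cup B})\to 0,$$
where $\mathscr{D}_B'$ is a suitably chosen set of data adapted to the boundary condition $\mathscr{D}_B$. The construction mimics the classical Morse-theoretic derivation of the long exact sequence of a pair, adapted to the pearl complex.

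Concretely, I start from data $\mathscr{D}_{A\cup B}=(\widetilde{f},(\cdot,\cdot),\widetilde{J})$ as in Proposition~\ref{prop:quantum-relative}, and a Morse function $g_A$ on $A$ (which is viewed as a disjoint union of closed Lagrangians in $M$). In a small collar of each component of $A$---chosen to lie outside the compact set $K$ where $\pi$ fails to be $(\widetilde{J},i)$-holomorphic---I modify $\widetilde{f}$ to a new Morse function $\widetilde{f}'$ of the form $h(x)+\chi(x)g_A(p)$ in collar coordinates $(x,p)$, where $\chi$ is a bump function and $h$ is a profile that dips to a strict minimum at some interior $x_0$ of the collar (so that $h''(x_0)>0$). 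This flips the direction of $-\nabla\widetilde{f}'$ at $A$ from outward to inward, so $\mathscr{D}_B':=(\widetilde{f}',(\cdot,\cdot)',\widetilde{J})$ is adapted to $\mathscr{D}_B$. Its critical set decomposes as $\Crit(\widetilde{f}')=\Crit(\widetilde{f})\sqcup\Crit_A(\widetilde{f}')$, where $\Crit_A(\widetilde{f}')=\{(x_0,p):p\in\Crit(g_A)\}$, with Morse indices unchanged (the new $x$-direction at $(x_0,p)$ has positive Hessian eigenvalue and contributes no index). Let $\mathcal{C}_A\subset\mathcal{C}(\mathscr{D}_B')$ be the submodule spanned by $\Crit_A(\widetilde{f}')$.

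The crucial and most delicate step is to verify: (i) $\mathcal{C}_A$ is a subcomplex, chain-isomorphic to the pearl complex of $A$ for the data $(g_A,(\cdot,\cdot)|_A,\widetilde{J}|_A)$; (ii) the quotient $\mathcal{C}(\mathscr{D}_B')/\mathcal{C}_A$ is chain-isomorphic to $\mathcal{C}(\mathscr{D}_{A\cup B})$. Both follow from a close analysis of pearly trajectories of $\widetilde{f}'$: since the collar lies outside $K$, any $\widetilde{J}$-holomorphic disk with boundary on $\overline{W}$ touching the collar has, by Lemma~\ref{lem:open-mapping}, constant projection to $\R^2$, hence is contained in a single fiber $\{(x_c,a_i^\pm)\}\times M$ with boundary on a copy of an end Lagrangian. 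Combined with the fact that the flow of $-\nabla\widetilde{f}'$ is attracted to the slice $\{x=x_0\}$ within the collar and then flows along $A$ by $-\nabla g_A$, this forces pearly trajectories starting at $\Crit_A(\widetilde{f}')$ to remain in $\{x=x_0\}\cong A$ and to reduce to pearly trajectories on $A$, proving (i); likewise, pearly trajectories between critical points of $\widetilde{f}$ cannot enter the collar without being trapped in $\Crit_A$, and since $\widetilde{f}'\equiv\widetilde{f}$ outside the collars they coincide with the trajectories of $\mathscr{D}_{A\cup B}$, proving (ii). The long exact sequence in homology then follows, with invariance (Proposition~\ref{prop:quantum-relative}) identifying the terms as $QH_*(\overline{W},B)$ and $QH_*(\overline{W},A\cup B)$ independently of the collar modification.
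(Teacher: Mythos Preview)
Your proof is correct and follows essentially the same strategy as the paper: realize the long exact sequence via a short exact sequence of pearl complexes, using the product structure near the ends together with Lemma~\ref{lem:open-mapping} to confine holomorphic disks to single fibres in the collar.

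The one genuine (but minor) difference is the choice of transverse Morse profile near the $A$-ends. You introduce a local \emph{minimum} $h''(x_0)>0$, so the new critical points carry transverse index $0$; this makes $\mathcal{C}_A$ a \emph{subcomplex} with no degree shift, and the quotient is $\mathcal{C}(\mathscr{D}_{A\cup B})$. The paper instead builds its function $\widetilde f$ so that the $\sigma_i^{\pm}$ already have a single transverse critical point of index $1$ near each end in $S=A\cup B$; it then obtains
\[
0\to \mathcal{C}(V,\widetilde f|_V,\widetilde J)\to \mathcal{C}(W,\widetilde f,\widetilde J)\to \mathcal{C}(A'',\widetilde f|_{A''},\widetilde J)\to 0,
\]
where the $A$-contribution sits as the \emph{quotient} with a degree shift by one. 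The two choices are Morse-theoretic duals of each other and yield the same long exact sequence. Your version has the advantage of avoiding the degree shift; the paper's version has the advantage that no modification of $\widetilde f$ is needed (the single function adapted to $A\cup B$ already carries the required critical structure). One small point worth making explicit in your write-up: to ensure $\Crit(\widetilde f')=\Crit(\widetilde f)\sqcup \Crit_A(\widetilde f')$ with the stated indices, you should take $\widetilde f$ already of split form $\sigma(x)+g_A(p)$ near the $A$-ends (which is allowed by Proposition~\ref{prop:quantum-relative}) rather than starting from an arbitrary $\widetilde f$ and adding $h(x)+\chi(x)g_A(p)$.
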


A similar exact sequence also exists with coefficients in
$\mathcal{A}$.

\begin{proof} We put $S=A\cup B$ and we intend to construct a
   particular function $\widetilde{f}$ as the one appearing in the
   definition of $QH(\overline{W},S)$ but with a number of additional
   properties.  We use below the same notation as the one fixed before
   the statement of Proposition \ref{prop:quantum-relative}.  In
   particular, $J_+ \subset \{1, \ldots, k_+\}$, $J_- \subset \{1,
   \ldots, k_-\}$ are so that $$S' = \Bigl(\coprod_{j \in J_-}
   \{(R_{-}-\epsilon, a^{-}_j)\} \times L_j^-\Bigr) \; \cup \;
   \Bigl(\coprod_{i \in J_+} \{(R_{+}+\epsilon, a^{+}_i)\} \times
   L_i^+\Bigr)$$ is the part of the boundary of $W$ corresponding to
   $S$.  We also denote by $J'_{+}=\{1,\ldots, k_{+}\}\setminus J_{+}$
   and $J'_{-}=\{1,\ldots, k_{-}\}\setminus J_{-}$.
 
   Let $\widetilde{f}: W \longrightarrow \mathbb{R}$ be a Morse
   function with the following properties.
   \begin{equation} \label{eq:tilde-f}
      \begin{aligned}
         & \widetilde{f}(x,a_i^+, p) = f_i^+(p) + \sigma^+_i(x), \quad
         \sigma^{+}_{i}: [R_+ +\epsilon/4, R_+ + \epsilon]\to \R,
         \; p \in M, \; j = 1, \ldots, k_+, \\
         & \widetilde{f}(x,a_j^-, p) = f_j^-(p) + \sigma^-_j(x), \quad
         \sigma^{-}_{j}: [-R_- - \epsilon, -R_- - \epsilon/4]\to \R,
         \; p \in M,\; j=1, \ldots, k_-,
      \end{aligned}
   \end{equation}
   where $f_i^+: L_i^+ \longrightarrow \mathbb{R}$, $f_j^{-}:L_j^{-}
   \longrightarrow \mathbb{R} $ are Morse functions.  The functions
   $\sigma_i^+$, $\sigma_j^+$ are also Morse, each with a
   \emph{single} critical point and are required to satisfy the
   following conditions:
   \begin{enumerate}
     \item $\sigma^{+}_i(x)$ is a non-constant linear function for $x
      \in [R_{+} + 3\epsilon/4, R_{+} + \epsilon]$. Moreover, in this
      interval $\sigma^{+}_i$ is decreasing if $i\in J_{+}$ and
      increasing if $i \in J'_{+}$. Further, $\sigma^{+}_{i}$ has a
      single critical point at $R_{+}+\epsilon/2$ and this is of index
      $1$ if $i\in J_{+}$ and of index $0$ if $i\in J'_{+}$.
     \item $\sigma^{-}_j(x)$ is a non-constant linear function for $x
      \in [-R_{-}-\epsilon, -R_{-} - 3 \epsilon /4]$. Moreover, in
      this interval $\sigma^{-}_j$ is increasing if $j\in J_{-}$ and
      increasing if $j \in J'_{-}$; $\sigma^{-}_{j}$ has a single
      critical point at $R_{-}-\epsilon/2$ and this is of index $1$ if
      $j\in J_{-}$ and of index $0$ if $j\in J'_{-}$.
   \end{enumerate}
   A function $\widetilde{f}$ with these properties will be called
   \emph{adapted to the exit region $S$}.

   We now pick a Riemannian metric $(\cdot, \cdot)$ on $W$ which
   splits as $g^{\pm} \oplus dx^2$ on $W \cap \pi^{-1}([R_+ +
   \epsilon/4, R_+ + \epsilon] \times \mathbb{R})$ and $W\cap
   \pi^{-1}([-R_{-}-\epsilon, -R_{-}-\epsilon/4] \times \mathbb{R})$
   for some Riemannian metrics $g^{\pm}$ on the manifolds $\coprod_i
   L^+_i$ and $\coprod_j L^{-}_j$.  We call such a metric
   \emph{adapted} to the ends of $\widetilde{W}$.  Finally we also
   pick (a time independent) almost complex structure $\widetilde{J}$
   on $\widetilde{M}$ such that $\pi$ is $(\widetilde{J},
   i)$-holomorphic outside a compact set contained in $\widetilde{M}
   \setminus \pi^{-1}([R_{-}-\epsilon/4, R_{+}+\epsilon/4] \times
   \mathbb{R})$.

   Let now $I_{-}, I_{+}$ be index sets so that
   $$A'= \Bigl(\coprod_{j \in I_-} \{(R_{-}-\epsilon, a^{-}_j)\} 
   \times L_j^-\Bigr) \; \cup \; \Bigl(\coprod_{i \in I_+}
   \{(R_{+}+\epsilon, a^{+}_i)\} \times L_i^+\Bigr)$$ corresponds to
   $A$ and let $U(A')$ be a tubular neighborhood of $A'$ in $W$ given
   by
   $$U(A')=\Bigl(\coprod_{j \in I_-} [R_{-}-\epsilon, R_{-}-5\epsilon/8]
   \times \{ a^{-}_j)\} \times L_j^-\Bigr) \; \cup \; \Bigl(\coprod_{i
     \in I_+} [R_{+}+5\epsilon/8, R_{+}+\epsilon]\times\{ a^{+}_i)\}
   \times L_i^+\Bigr)~.~$$

   We now let $V=W\setminus U(A')$ and also denote $$A''=
   \Bigl(\coprod_{j \in I_-} \{(R_{-}-\epsilon/2, a^{-}_j)\} \times
   L_j^-\Bigr) \; \cup \; \Bigl(\coprod_{i \in I_+}
   \{(R_{+}+\epsilon/2, a^{+}_i)\} \times L_i^+\Bigr)~.~$$

   We assume the various choices made are generic so that the pearl
   complexes $\mathcal{C}(W, \widetilde{f},\widetilde{J})$,
   $\mathcal{C}(A'', \widetilde{f}|_{A''}, \widetilde{J})$ and
   $\mathcal{C}(V, \widetilde{f}|_{V}, \widetilde{J})$ are well
   defined.  These three complexes are related by an obvious short
   exact sequence:

   $$0 \to \mathcal{C}(V, \widetilde{f}|_{V}, \widetilde{J})\to
   \mathcal{C}(W, \widetilde{f},\widetilde{J})\to\mathcal{C}(A'',
   \widetilde{f}|_{A''}, \widetilde{J})\to 0~.~$$

   The claim now follows by noticing that $\mathcal{C}(A'',
   \widetilde{f}|_{A''}, \widetilde{J})$ is isomorphic to a pearl
   complex associated to $A$ with a shift in degree by one,
   $H(\mathcal{C}(V, \widetilde{f}|_{V},
   \widetilde{J}))=QH(V'',B)=QH(\overline{W},B)$ and, by definition,
   $H(\mathcal{C}(W,
   \widetilde{f},\widetilde{J}))=QH(\overline{W},A\cup B)$.

\end{proof}

\begin{rem}
   We will mainly apply the construction above to Lagrangians
   $\overline{V}$ that are the $\R$-extensions of Lagrangian
   cobordisms $V$. In this case we denote $QH(\overline{V}, S)$ by
   $QH(V,S)$ and similarly when working over $\mathcal{A}$.
\end{rem}

\subsection{The PSS isomorphism for Lagrangians with cylindrical
  ends} \label{sbsb:pss-cyl}
  
Let $\overline{W}\subset \widetilde{M}$ be a Lagrangian with
cylindrical ends and assume that $S$ is a union of some of its ends as
in \S\ref{sbsb:PSS-cob}. The choice of $S$ determines a path
component $c_S \in \pi_{0}(\mathcal{H}(\overline{W},\overline{W}))$ in
the following way.  Consider a perturbation function $f$, as at point
C in~\S\ref{sbsb:hf-lag-cyl}, so that:
\begin{enumerate}
  \item for each positive end $i$ of $\overline{W}$, the constant
   $\alpha^{+}_{i}$ is negative if the end is in $S$ and is positive
   if the end $i$ is not in $S$.
  \item for each negative end $j$, the constant $\alpha^{-}_{j}$ is
   positive if the end is in $S$ and is negative if the end $j$ is not
   in $S$
\end{enumerate}
and put $c_S:=[f]$.

The purpose of this subsection is to discuss the proof of the
following result.
   
\begin{prop}\label{prop:PSS-cyl}
   There exists a PSS-type isomorphism over $\mathcal{A}$
   $$\overline{PSS}_{S}:HF(\overline{W},\overline{W};c_S)
   \longrightarrow QH(\overline{W}, S;\mathcal{A})~.~$$
\end{prop}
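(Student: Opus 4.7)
The plan is to adapt the standard PSS construction from \S\ref{sbsb:pss} to the cylindrical-ends setting. Fix data $\mathscr{D}_S = (\widetilde{f}, (\cdot, \cdot), \widetilde{J})$ adapted to the exit region $S$ as in the proof of Lemma \ref{lem:exact}, and fix a pair $(H,f) \in \mathcal{H}(\overline{W}, \overline{W})$ with $[f] = c_S$, arranging that the perturbation function $f$ and the Morse function $\widetilde{f}$ are compatible at the ends: on an end labeled $i \in J_+$ the negative gradient $-\nabla \widetilde{f}$ pushes in the $+x$ direction (outward through $S'$) while the Hamiltonian flow of $e = f \circ \pi$ with $\alpha_i^+ < 0$ moves the end of $\overline{W}$ in the $-y$ direction, producing the single new transverse intersection of $\phi^e_1(\overline{W})$ with $\overline{W}$ which corresponds to the unique critical point of $\sigma_i^+$; analogous statements hold for the other ends. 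Choose $\widetilde{J}$ so that $\pi$ is $(\widetilde{J}, i)$-holomorphic outside a compact subset of $[R_- - \epsilon/4, R_+ + \epsilon/4] \times \mathbb{R} \times M$, which can be done simultaneously for the quantum and Floer sides.

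Next I would define a chain-level map
\[
\widetilde{PSS}_S: \mathcal{C}(\mathscr{D}_S) \otimes_{\Lambda} \mathcal{A} \longrightarrow CF(\overline{W}, \overline{W}; c_S; (H,f); \widetilde{J})
\]
by the usual PSS recipe: given a critical point $x \in \Crit(\widetilde{f})$ and a Hamiltonian orbit $\gamma$, count configurations consisting of a $-\nabla \widetilde{f}$ flow line from $x$ possibly broken by $\widetilde{J}$-holomorphic disks with boundary on $\overline{W}$, followed by a half-plane $u : \mathbb{R} \times [0,1] \to \widetilde{M}$ satisfying a continuation-type equation that interpolates between the homogeneous $\overline{\partial}_{\widetilde{J}}$ equation with both boundaries on $\overline{W}$ (near $-\infty$) and Floer's equation for $(\phi^e_1(\overline{W}), \overline{W})$ with the Hamiltonian $H$ turned on (near $+\infty$), weighted by $T^{\omega(v_u)}$ with the appropriate area correction from \S\ref{subsubsec:movingbdry} to compensate for the moving boundary.

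The crucial technical input is compactness of these moduli spaces, and this is where the main obstacle lies. Since $\widetilde{f}$ is adapted to $S$, the negative gradient flow of $\widetilde{f}$ starting at a critical point never reaches $\partial W$, so the Morse-theoretic part of each PSS trajectory stays inside $W$, exactly as in Proposition \ref{prop:quantum-relative}. For the holomorphic half-plane $u$ and any disk bubbles attached along the flow line, one applies Lemma \ref{lem:open-mapping} together with the perturbed version recorded in Remark \ref{rem:extension-opem-mapp}: the asymptotic of $u$ at $+\infty$ is a Hamiltonian chord located in a fixed compact region of $\widetilde{M}$, while the asymptotic at $-\infty$ matches the end of the Morse trajectory inside $W$; since $\pi \circ u$ is holomorphic outside a compact set and has bounded image by the open mapping theorem, $u$ cannot escape to infinity. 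Standard gluing and energy bounds then produce a well-defined chain map over $\mathcal{A}$ whose definition does not depend, up to chain homotopy, on the generic choices involved.

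Finally, I would establish that $\widetilde{PSS}_S$ is a quasi-isomorphism by the usual two-sided argument: construct the reverse map $\widetilde{PSS}_S^{-1}$ by symmetric configurations (half-plane converging to a Hamiltonian orbit at $-\infty$ and to a negative gradient line of $\widetilde{f}$ at $+\infty$) and verify that both compositions are chain homotopic to the identity via standard one-parameter deformations of the interpolating data; each homotopy requires yet another application of Lemma \ref{lem:open-mapping} to obtain compactness, but the argument is formally identical to the closed case from \S\ref{sbsb:pss} and \cite{Alb:PSS, Bi-Co:rigidity, Bi-Co:Yasha-fest}. Combining with the independence of $\widetilde{PSS}_S$ from $H$ and $\widetilde{\mathbf{J}}$ (Proposition \ref{prop:invariance-Fl}) and from $\mathscr{D}_S$ (Proposition \ref{prop:quantum-relative}), the induced map $\overline{PSS}_S$ in homology is the desired canonical isomorphism.
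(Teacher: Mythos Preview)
Your overall architecture is correct, but there is a genuine gap in the compactness step, and this is precisely where the paper spends most of its effort.

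Two specific problems. First, the sentence ``the negative gradient flow of $\widetilde{f}$ starting at a critical point never reaches $\partial W$'' is not true: on ends belonging to $S$ the flow $-\nabla\widetilde{f}$ points \emph{outward}, so a trajectory can exit through $S'$. What is true is that a pearl trajectory joining two \emph{critical points} stays in $W$, but in a PSS configuration the last flow segment ends at the incidence point $b$ with the half-plane $u$, and $b$ is not a critical point. A priori $b$ can sit far out on an end in $S$, so you cannot conclude that the pearl part confines $b$ to a compact set. Second, and more seriously, the bare appeal to Lemma~\ref{lem:open-mapping} does not control $u$. On a positive end $i$, the curves $\pi(\overline{W}_{s})$ and $\pi(\overline{W})$ bound an unbounded strip-like region into which $v=\pi\circ u$ can legally map (both boundary conditions lie on the boundary of that region), so Remark~\ref{rem:open-domain} does not apply. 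Nothing in your argument excludes $u$ from bulging out to infinity in this strip.

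The paper fixes this by a specific coupling of the two pieces of data. It does not take an arbitrary $f$ with $[f]=c_{S}$; instead it extends the Morse function $\widetilde{f}$ itself to a Hamiltonian $H_{\widetilde{f}}$ via a Weinstein neighbourhood, and uses $\overline{W}_{s}=\phi^{H_{\widetilde{f}}}_{\beta(s)}(\overline{W})$. Because each $\sigma_{i}^{\pm}$ has a single critical point at $R_{\pm}\pm\epsilon/2$, the projected curves $l_{s}=\pi(\overline{W}_{s})$ all pass through the common point $P=(R_{+}+\epsilon/2,\,a_{i}^{+})$ and intersect $l=\pi(\overline{W})$ transversally there for $s>0$. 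A quadrant analysis around $P$ (this is Lemma~\ref{lem:compact-pss}, strictly finer than Lemma~\ref{lem:open-mapping}) then shows that $v$ cannot cross from the inside of $P$ to the outside. Finally---and this is the step entirely missing from your sketch---the paper uses a \emph{second} Morse function $\widetilde{f}'$ with parameter $\epsilon'=\epsilon/2$ to define the pearl complex; its $\sigma_{i}^{+}$ is already increasing on $[R_{+}+3\epsilon/8,\infty)$, so no $-\nabla\widetilde{f}'$ trajectory coming from the interior can reach a point with first coordinate $\geq R_{+}+\epsilon/2$. This rules out the remaining possibility that $u$ lies entirely outside $P$. Without these two coupled choices (Hamiltonian from $\widetilde{f}$, pearls from $\widetilde{f}'$ with nested support) the compactness argument does not close.
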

  
\begin{proof} With the notations in the proof of
   Lemma~\ref{lem:exact}, let $\widetilde{f}: W\to \R$ be adapted to
   the exit region $S$.  Extend the function $\widetilde{f}$ to the
   whole of $\overline{W}$ by using the formulas in~\eqref{eq:tilde-f}
   and extending the functions $\sigma_i^+(x)$ linearly beyond
   $R_{+}+\epsilon$ and also extending linearly the functions
   $\sigma_j^{-}(x)$ linearly below $R_{-}-\epsilon$.
 
   Fix a Darboux-Weinstein neighborhood $\mathcal{U}$ of
   $\overline{W}$ in $\widetilde{M}$ which is symplectomorphic to a
   neighborhood of the zero-section in $T^*\overline{W}$. Due to the
   cylindrical ends of $\overline{W}$ we can choose $\mathcal{U}$ so
   that $\pi(\mathcal{U}) \cap ((-\infty,-R_-] \times \mathbb{R})$
   contains the strips $\cup_i (-\infty,R_-] \times (a^-_i -\delta,
   a^-_i + \delta)$ for some $\delta>0$ and similarly for
   $\pi(\mathcal{U}) \cap ([R_+, \infty) \times \mathbb{R})$.

   After multiplying $\widetilde{f}$ by a small positive constant we
   may assume that $\widetilde{f}$ has a small differential $d
   \widetilde{f}$ so that the graph of $d \widetilde{f}$ fits inside
   of $\mathcal{U}$.  Recall that $\widetilde{f}$ has a linear
   horizontal component along the ends. Extend the function
   $\widetilde{f}$ first to a function on $\mathcal{U}$ using the
   identification of $\mathcal{U}$ with a neighborhood of
   $\overline{W} \subset T^* \overline{W}$ (making it constant along
   each cotangent fibre), and then to the rest of $\widetilde{M}$ so
   that the resulting function $H_{\widetilde{f}}$ vanishes outside a
   slightly larger neighborhood $\mathcal{U}'$ of $\mathcal{U}$.

   Pick a generic autonomous almost complex structure $\widetilde{J} \in
   \widetilde{\mathcal{J}}_{B}$ with $B$ a compact set sufficiently
   large so that $\overline{W}$ is cylindrical outside $B$. We will
   also assume $R_{+}$ and $|R_{-}|$ sufficiently big so that
   $[R_{+},\infty)\times \R$ as well as $(-\infty, R_{-}]\times \R$
   are both outside $B$.

   The linearity of the function $\widetilde{f}$ at infinity
   immediately shows that $\overline{W}_{1}
   :=\phi^{H_{\widetilde{f}}}_{1}(\overline{W})$ and $\overline{W}$
   are cylindrically distinct at infinity, that for a generic choice
   of $\widetilde{f}$ the Floer complex
   $CF(\overline{W}_{1},\overline{W};\widetilde{J})$ is well defined
   and that, by Proposition~\ref{prop:invariance-Fl}, its homology is
   canonically identified with $HF(\overline{W},\overline{W};c_{S})$
   (see~\S\ref{sbsb:hf-lag-cyl} and in
   particular~\eqref{eq:Floer-cplx-cyl}).
 
   We will also need below another function $\widetilde{f}':W\to \R$
   with the same properties from Lemma \ref{lem:exact} as
   $\widetilde{f}$ except that the value of $\epsilon$ used to
   construct $\widetilde{f}'$ is fixed to be $\epsilon'=\epsilon/2$.
   We also fix a metric $(\cdot,\cdot)$ on $W$ that is adapted to the
   ends of $\overline{W}$ (in the sense indicated in the proof of
   Lemma \ref{lem:exact}) and so that the pearl complex
   $\mathcal{C}(\mathscr{D}_{S})$ is defined for
   $\mathscr{D}_{S}=(\widetilde{f}',(\cdot,\cdot), \widetilde{J})$.
   We will work in this proof only over $\mathcal{A}$ so that the
   homology computed by $\mathcal{C}(\mathscr{D}_{S})$ is
   $QH(\overline{W},S;\mathcal{A})$.

   We now intend to consider the moving boundaries PSS - chain
   morphism - see \S\ref{sbsb:pss}:
   $$\widehat{PSS}:\mathcal{C}(\mathscr{D}_{S})\to 
   CF(\overline{W}_{1},\overline{W}; \widetilde{J})~.~$$ In fact, the
   only issue that is specific to our cylindrical at infinity setting
   is again whether the necessary compactness is satisfied by the
   moduli spaces used to define this map. If this is the case, the
   rest of the construction takes place like in the compact setting.
   In particular, we also obtain that this morphism induces an
   isomorphism in homology.

   Thus, our focus will now be to describe the relevant moduli spaces
   and indicate the reason why compactness hols.

   Let $x\in \Crit(\widetilde{f'})$ and let $a\in \overline{W}_{1}\cap
   \overline{W}$ be an intersection point.  Consider a $C^{\infty}$
   function $\beta: \R\to [0,1]$ so that $\beta(s)=0$ for $s\leq 0$,
   $\beta(s)=1$ for $s\geq 1$ and $\beta$ is strictly increasing on
   $(0,1)$.  Put
   $\overline{W}_{s}=\phi^{H_{\widetilde{f}}}_{\beta(s)}$.

   We consider the moduli space $\mathcal{M}(x,a;\widetilde{J})$
   consisting of pairs $(v,u)$ where $v$ is a string of pearls on
   $\overline{W}$ formed by flow lines of $-\nabla \widetilde{f}'$
   (the first one originating at $x$) alternating with
   $\widetilde{J}$-holomorphic disks in $\widetilde{M}$ with boundary
   on $\overline{W}$ (see~\cite{Bi-Co:rigidity, Bi-Co:Yasha-fest}) so
   that the last flow line in the string $v$ ends at a point $b\in
   \overline{W}$. This point $b$ is the starting point of a solution
   $u:[0,1]\times \R\to \widetilde{M}$, of the Cauchy-Riemann equation
   $\overline{\partial}_{\widetilde{J}}u=0$ subject to the following
   moving boundary condition:
   \begin{equation}\label{eq:mov-bdr-pss}
      u(0,s)\in \overline{W}_{s} \ , \ u(1,s)\in \overline{W}.
   \end{equation}
   By ``starting point'' we mean that $\lim_{s\to-\infty} u(-,s)=b$.
   We also have $\lim_{s\to\infty}u(-,s)=a$.
   
   It is easy to see that the needed compactness properties for the
   definition of $\widetilde{PSS}$ as well as that of its
   (homological) inverse and all the other relevant properties are an
   immediate consequences of the following result.

   \begin{lem}\label{lem:compact-pss}
      With the notation above $$\textnormal{image\,}(\pi\circ
      u)\subset B\cup (([R_{-}-\epsilon/2, R_{+}+\epsilon/2]\times
      \R)\cap \mathcal{U}'))~.~$$
   \end{lem}
   \begin{proof}[Proof of Lemma~\ref{lem:compact-pss}]
      We will prove that $\textnormal{image\,}(\pi\circ u)\subset
      B\cup (((-\infty, R_{+}+\epsilon/2]\times \R)\cap
      \mathcal{U}')$. The fact that $\textnormal{image\,}(\pi\circ
      u)\subset B\cup (([R_{-}-\epsilon/2, \infty)\times \R)\cap
      \mathcal{U}')$ can be proved by an analogous argument.
      
      Put $P=\{R_{+}+\epsilon/2\}\times a^{+}_{i}$ and notice that
      this is a point of intersection of $l_{s}=\pi(\overline{W}_{s})$
      and $l=\pi(\overline{W})$ for all $s$, and moreover the
      intersection is transverse for $s>0$. This is because $P$ is a
      critical point for the function $\sigma_{i}^{+}$. Without loss
      of generality we assume that $P$ is as depicted in
      Figure~\ref{fig:PSS-quadrants} (the other cases are analogous).
      Denote by $Q_1, Q_2$ the connected components of $\mathbb{C}
      \setminus \bigcup_{s \in [0,1]} l_s$ corresponding to the second
      and fourth quadrants respectively near the intersection points
      $P$, where $l$ plays the role of the $x$-axis and $l_1$ the role
      of the $y$-axis. (Thus $Q_1$ is ``above'' $P$ and $Q_2$
      ``below'' $P$.) Denote by $Q_+, Q_- \subset \mathbb{C} \setminus
      (l \cup l_1)$ the connected components corresponding to the
      first and fourth quadrants respectively (so that $Q_+$ is on the
      ``right'' of $P$ and $Q_-$ on its ``left''). Note that $Q_1,
      Q_2, Q_+$ are unbounded.

      \begin{figure}[htbp] 
         \epsfig{file=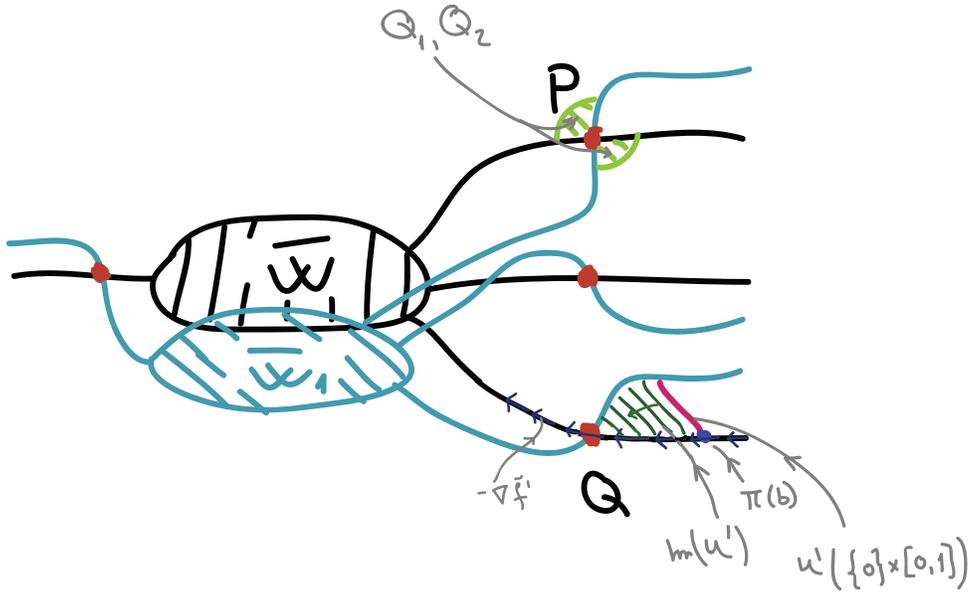, width=.80\linewidth}
         \caption{\label{fig:PSS-quadrants} The cobordisms
           $\overline{W}_{1}$ and $\overline{W}$ . The quadrants
           $Q_{1}$, $Q_{2}$ around $P$.  Also appear the image of
           $u'$, the points $Q=\pi_{a}$ and $\pi(b)$ as well as the
           direction of the flow $-\nabla \widetilde{f}'$ when
           projected on $\R^{2}$.}
      \end{figure}

      Put $u'=\pi\circ u$. First note that $u' (\textnormal{Int\,}
      (\mathbb{R} \times [0,1]) \cap (Q_1 \cup Q_2) = \emptyset$. This
      follows from the open mapping theorem and the fact that $Q_1,
      Q_2$ are unbounded, in a similar way to the arguments
      in~\S\ref{subsec:compact} (see also the end of the proof of
      Lemma~\ref{cor:Floer-structure2}).

      Next note that it is impossible to have an interior point $z_0
      \in \mathbb{R} \times (0,1)$ with $u'(z_0) = P$. Indeed, if such
      a $z_0$ would exist then by the open mapping theorem the image
      of $u$ would intersect $Q_1$ (and $Q_2$) which we have just seen
      is impossible.

      Next we claim that it is impossible to have two points $z_-, z_+
      \in \mathbb{R} \times [0,1]$ with $u'(z_-) \in Q_-$ and $u'(z_+)
      \in Q_+$ (i.e. \RB{the image of} $u'$ cannot intersect both
      $Q_-$ and $Q_+$). Indeed, if such points $z_{\pm}$ would exist,
      then connect them by a path $\gamma \subset \mathbb{R} \times
      [0,1]$ such that $\gamma$ lies in $\textnormal{Int\,}
      (\mathbb{R} \times [0,1])$ except of possibly at its end points
      (in case $z_-$ or $z_+$ are on the boundary). As the \RB{image
        of} $u'$ avoids both $Q_1$ and $Q_2$ it follows that there is
      an interior point $z' \in \gamma$ with $u'(z') = P$. But we have
      seen that this is impossible. This proves the claim.

      It follows from the above that if the image of $u$ does not
      satisfy the claim of the lemma, then the whole image of $u'$ is
      contained either in $((-\infty, R_{-}-\epsilon/2] \times
      \mathbb{R}$ or in $[R_{+}+\epsilon/2,\infty))\times \R$. This
      means that there is some point $Q$ of the form
      $Q=\{R_{+}+\epsilon/2\}\times a^{+}_{i_{0}}$ or $Q=
      \{R_{-}-\epsilon/2\}\times a^{-}_{j_{0}}$ so that $\pi(a)=Q$.
      To simplify the discussion assume that we are in the first case,
      the second one is treated in a perfectly similar fashion. The
      fact that $\pi(a)=Q$ implies that the strip $u'$ ``arrives'' at
      $Q$ and this is easily seen to imply that
      $\textnormal{ind}_{\sigma^{+}_{i_{0}}}(Q)=0$.  Moreover,
      $\pi(b)$ can be written as $\pi(b)=(b', a^{+}_{i_{0}})$ with
      $b'\geq R_{+}+\epsilon/2$.  At this point we use the particular
      form of the function $\widetilde{f}'$: as the function
      $\sigma^{+}_{i_{0}}$ used in the construction of
      $\widetilde{f}'$ is increasing on the interval
      $[R_{+}+3\epsilon/8, +\infty)$ (because $\epsilon'=\epsilon/2$)
      and, as the metric $(\cdot,\cdot)$ is adapted to the ends of
      $\overline{W}$, we deduce that there can not be any flow lines
      of $-\nabla (\widetilde{f}')$ that come from the interior of the
      region $\overline{W}\cap \pi^{-1}([R_{-}-\epsilon/2,
      R_{+}+\epsilon/2])$ and reach the point $b$.  Clearly, by Lemma
      \ref{lem:open-mapping}, there can not be any
      $\widetilde{J}$-holomorphic disk with boundary on $\overline{W}$
      reaching $b$ either.  Taken together, these two facts contradict
      our assumption on the image of $u$ and this concludes the proof
      of the lemma.
   \end{proof}
   The proof of Proposition~\ref{prop:PSS-cyl} follows now by
   standard arguments.
\end{proof}

\subsection{Proof of Theorem \ref{thm:quantum_h}} \label{sb:elem-cob}

Recall that we are considering the monotone Lagrangian  cobordism $(V; L', L)$
and we intend to compare the quantum homologies of the two ends.

\begin{proof}
   Let $V'$ be a (non-compactly supported) small Hamiltonian
   deformation of $V$ so that $V'$ is cylindrically distinct from $V$
   and the negative and the positive ends of $V'$ are below those of
   $V$ in the sense that they have lower imaginary coordinates in the
   plane than the ends of $V$ - see Figure \ref{fig:CobordismsQuant}.
   By Proposition~\ref{prop:PSS-cyl} the Floer homology associated to
   the two Lagrangian cobordisms, $\overline{V}$ and $\overline{V}'$
   satisfies:
   \begin{equation}\label{eq:iso-quantumFloer}
      HF(\overline{V'},\overline{V})\cong HF(\overline{V},\overline{V};c_L)
      \cong QH(V, L;\mathcal{A}),
   \end{equation}
   where $c_L \in \pi_{0}(\mathcal{H}(\overline{V},\overline{V}))$ is defined as
   at the beginning of~\S\ref{sbsb:pss-cyl}.
   \begin{figure}[htbp]
      \epsfig{file=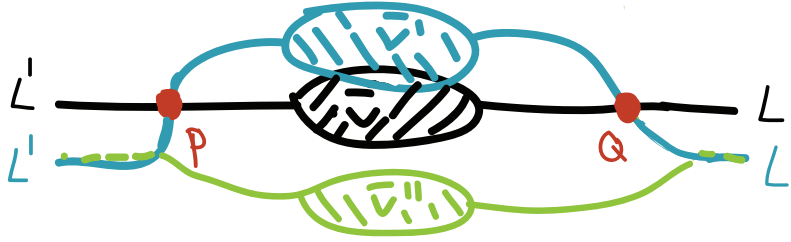, width=0.65\linewidth}
      \caption{The elementary cobordism $\overline{V}$, its (non-isotopic)
        deformation $\overline{V}'$ together with one horizontally isotopic
        deformation of $\overline{V}'$, $\overline{V}''$.  We have
        $QH(V,L;\mathcal{A})\cong HF(\overline{V}',\overline{V})\cong
        HF(\overline{V}'',\overline{V})=0$.}
      \label{fig:CobordismsQuant}
   \end{figure}
  
   It is clear that, as in Figure~\ref{fig:CobordismsQuant}, we may
   find $\overline{V}''$ horizontally isotopic to $\overline{V}'$ and
   disjoint from $\overline{V}$.  Thus,
   $HF(\overline{V}',\overline{V})\cong
   HF(\overline{V}'',\overline{V})=0$.  But now, from Lemma
   \ref{lem:exact}, we also have the long exact sequence:
   $$\to QH(L;\mathcal{A})\to QH(V;\mathcal{A})\to 
   QH(V,L;\mathcal{A})\to$$ as well as a similar exact sequence over
   $\La$.  From the exact sequence over $\mathcal{A}$ we deduce
   $QH(L;\mathcal{A})\to QH(V;\mathcal{A})$ is an isomorphism.  Recall
   from Lemma \ref{lem:alg-inclusion} that the map $QH(-)\to
   QH(-;\mathcal{A})$ is injective. Thus, $QH(V,L)=0$ and therefore
   $QH(L)\to QH(V)$ is also an isomorphism.  For further use, this
   arrow can be viewed, as in the Morse case, as induced by the
   inclusion $u_{1}: L\to V$. Clearly, a similar argument is valid for
   $QH(L')\to QH(V)$ with respect to the inclusion $u_{2}:L'\to V$.
   This proves the first part of the statement of Theorem
   \ref{thm:quantum_h}.  The next step is to show that we can find an
   isomorphism of $QH(L)$ and $QH(L')$ that also preserves the quantum
   product.  For this we consider the maps $p_{1}: QH(V;L\cup L')\to
   QH(L)$ the dual of $(u_{1})_{\ast}$ and $p_{2}:QH(V;L\cup L')\to
   QH(L')$ the dual of $(u_{2})_{\ast}$.  Both are again isomorphisms
   and it is an easy exercise to see that they also are algebra maps
   (with respect to the quantum product).  All these map are actually
   defined over $\La^{+}=\Z_{2}[t]$ but not necessarily isomorphisms
   over $\La^{+}$.
   
   Next we show that the morphisms induced by the inclusions $u_1$,
   $u_{2}$ on $H_{1}(-;\Z_{2})$ have the same image in
   $H_{1}(V;\Z_{2})$ if we also assume that $L$ and $L'$ are wide. For
   this it is enough to show that the composition $c_{1}:
   H_{1}(L;\Z_{2})\to H_{1}(V;\Z_{2})\to H_{1}(V, L';\Z_{2})$ vanishes
   as well as the other composition, obtained by switching $L$ and
   $L'$.  By duality, the vanishing of $c_{1}$ is equivalent to the
   vanishing of the composition $c_{1}':H_{n}(V,L;\Z_{2})\to
   H_{n}(V,L\cup L';\Z_{2})\to H_{n-1}(L;\Z_{2})$.  We now notice the
   existence of two maps $H_{n}(V,L;\Z_{2})\to QH(V,L)$ and
   $H_{n}(V,L\cup L';\Z_{2})\to QH(V,L\cup L')$ defined as follows.
   Assume that $f$ is a Morse function on $V$ adapted to the exit
   region $L\cup L'$. Then we may assume that $f$ has a single maximum
   $w$. As the map $p_{2}$ is an isomorphism and is defined over
   $\La^{+}$, it follows that $[w]\not=0\in QH(V, L\cup L')$.  But
   this means that all the Morse cycles of $f$ in dimension $n$ are
   also pearl cycles. A similar argument applies to
   $H_{n}(V,L;\Z_{2})$. For the same reasons, there is as well a map
   $H_{n-1}(L;\Z_{2})\to QH(L)$ which is well defined because $L$ is
   not narrow. It is immediate to see that the resulting diagram
   commutes:
   \begin{equation}\label{eq:commut-simple}
      \xymatrix@-2pt{
        H_{n}(V,L;\Z_{2}) \ar[d]\ar[r]    & 
        H_{n}(V,L\cup L';\Z_{2}) \ar[r]\ar[d]   &  
        H_{n-1}(L;\Z_{2}) \ar[d]\\
        QH(V,L) \ar[r]    &   QH(V,L\cup L') \ar[r]  &   QH(L) }
   \end{equation} 
   The top row composition here is $c_{1}'$.  But now $QH(V,L)=0$ and,
   as $L$ is wide, the rightmost vertical arrow is an injection. This
   means that $c_{1}'$ vanishes and as a similar argument applies to
   $H_{1}(L';\Z_{2})\to H_{1}(V;\Z_{2})\to H_{1}(V, L;\Z_{2})$ this
   shows that the two inclusions $u_{1}$, $u_{2}$ have the same image
   in homology.  To end the proof we now specialize to $n=2$. Notice
   that the map $c_{1}':H_{2}(V,L;\Z_{2})\to H_{1}(L;\Z_{2})$ is
   easily identified with the connectant morphism in the long exact
   sequence of the pair $(V,L)$. Thus the next map in this exact
   sequence $H_{1}(L;\Z_{2})\to H_{1}(V;\Z_{2})$ is injective.  A
   similar argument applies to the inclusion $L'\to V$.
\end{proof}

\begin{rem} Similar methods easily imply also that the quantum module
   structures on $QH(L)$ and $QH(L')$ (over $QH(M)$) are isomorphic.
   Further, it is also possible to show that, for $n=2$, the
   enumerative invariants over $\Z_{2}$ that were introduced
   in~\cite{Bi-Co:lagtop} coincide for $L$ and $L'$.
\end{rem}

\subsection{Proof of Theorem \ref{t:2-end-split}} \label{sb:2-end-split} 

Here we assume that $(V; (L_{1},L_{2}), L)$ is a monotone cobordism so
that $QH(L)$ is a field and both $L_{1}$ and $L_{2}$ are not narrow.
The family $L, L_{1}, L_{2}$ is assumed uniformly monotone. We intend
to show the rank inequality (\ref{eq:rk}).

\begin{proof}
   The first part of the argument is based on the existence of the
   diagram:
   \begin{equation}\label{eq:two-exact-q}\xymatrix@-2pt{
        QH_{\ast}(V,L)\ar[r]^{j_{1}}\ar[d]_{j_{2}}& QH_{\ast}(V, L_{1}\cup L) 
        \ar[d]_{\eta_{1}}\ar[r]^{s_{1}}&QH_{\ast-1}(L_{1})
        \ar[d]^{k_{1}}\ar[r]^{l_{1}}&QH_{\ast-1}(V,L)\\
        QH_{\ast}(V, L_{2}\cup L )\ar[d]_{s_{2}}\ar[r]^{\eta_{2}} & 
        QH_{\ast-1}(L)\ar[r]^{i_{2}}\ar[d]_{i_{1}} & QH_{\ast-1}(V,L_{2})
        \ar[d]^{r_{2}}&\\
        QH_{\ast-1}(L_{2})\ar[d]_{l_{2}}\ar[r]_{k_{2}}& QH_{\ast-1}(V,L_{1}) 
        \ar[r]_{r_{1}}& QH_{\ast-1}(V,L_{1}\cup L_{2}) & \\
        QH_{\ast-1}(V,L) & & &}
   \end{equation} 
   where the columns and rows are exact. Here $i_{1}$, $i_{2}$,
   $j_{1}$, $j_{2}$, $k_{1}$, $k_{2}$, $l_{1}$, $l_{2}$, $r_{1}$,
   $r_{2}$ are induced by inclusions and $\eta_{1}$, $\eta_{2}$ and
   $s_{1}$, $s_{2}$ are connecting morphisms in the long exact
   sequences associated to these inclusions.  A further important
   remark is that, in appropriate degrees, $\eta_{1}$ is dual to
   $i_{2}$, $\eta_{2}$ is dual to $i_{1}$, $s_{1}$ is dual to $k_{1}$
   and $s_{2}$ is dual to $k_{2}$ - the duality here is similar to
   Poincar\'e duality (for pearl homology it appears in~\S 4.4
   of~\cite{Bi-Co:rigidity}).  The existence of this commutative
   diagram is shown in a way similar to the proof of Lemma
   \ref{lem:exact}. Note that Diagram~(\ref{eq:two-exact-q}) exists,
   together with the dualities indicated above, also with coefficients
   in $\mathcal{A}$.

   The next step is to notice the commutativity of the diagram
   \begin{equation}\label{eq:inclusion-Floer}
      \xymatrix@-2pt{
        QH(L;\mathcal{A}) \ar[r]^{i_{1}}\ar[d]_{PSS}& 
        QH(V,L_{1};\mathcal{A})\ar[d]^{PSS'}\\
        HF(L,L)\ar[r]_{\phi_{V}}& HF(L,L_{2})}
   \end{equation}
   up to multiplication by $T^{a}$ for some $a\in\R$.  We first
   describe the different morphisms showing up in this diagram and
   then we will justify its commutativity.
   
   Both $PSS$
   and $PSS'$ are  isomorphisms as explained below.     
   The morphism $PSS$ is just the
   Piunikin-Salamon-Schwarz-type isomorphism $QH(L;\mathcal{A})\to
   HF(L,L)$ as recalled in \S\ref{sbsb:pss}.  The morphism $PSS'$ is
   given by the composition:
   \begin{equation}\label{eq:comp2}
      QH(V,L_{1};\mathcal{A})
      \stackrel{\overline{PSS}_{L_{1}}}{\longrightarrow} 
      HF(\overline{V},\overline{V}; c_{L_{1}})\stackrel{\eta}{\to} 
      HF(\overline{V}',\overline{V}) \stackrel{\xi}{\to} HF (L,L_{2})~.~
   \end{equation} Here the
   first morphism $\overline{PSS}_{L_{1}}$ is the 
   PSS-type isomorphism discussed in
   Proposition \ref{prop:PSS-cyl}.     
   \begin{figure}[htbp]
      \begin{center}
         \epsfig{file=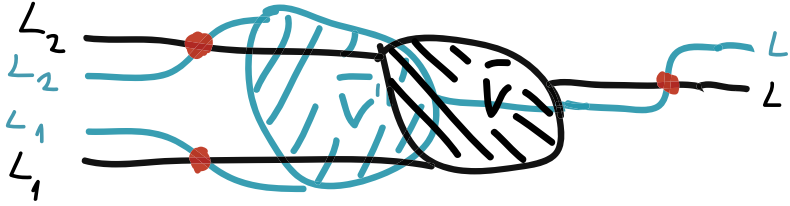, width=0.65\linewidth}
         \caption{\label{fig:CobTwoEnds1} The cobordism
           $\overline{V}'$ obtained by a Hamiltonian deformation
           associated to a small function $f:\overline{V}\to \R$
           adapted to the exit region $L_{1}$. We have
           $QH(V,L_{1};\mathcal{A})\cong
           HF(\overline{V}',\overline{V})$.}
      \end{center}
   \end{figure}
   The second isomorphism, $\eta$, follows from the definition of
   $HF(-,-)$ in \S\ref{sbsb:hf-lag-cyl} and
   Proposition~\ref{prop:invariance-Fl}.  The third isomorphism,
   $\xi$, is itself a composition of two isomorphisms
    $$HF(\overline{V}',\overline{V})\stackrel{\xi'}{\to} 
    HF(\overline{V}'',\overline{V})\stackrel{\xi''}{\to}
    HF(L,L_{2})~.~$$ Here $\xi'$ is provided (again via Proposition
    \ref{prop:invariance-Fl} ) by the fact that $\overline{V}'$ is
    horizontally isotopic to the cobordism $\overline{V}''$ in
    Figure~\ref{fig:CobTwoEnds2}.
     \begin{figure}[htbp]
       \begin{center}
          \epsfig{file=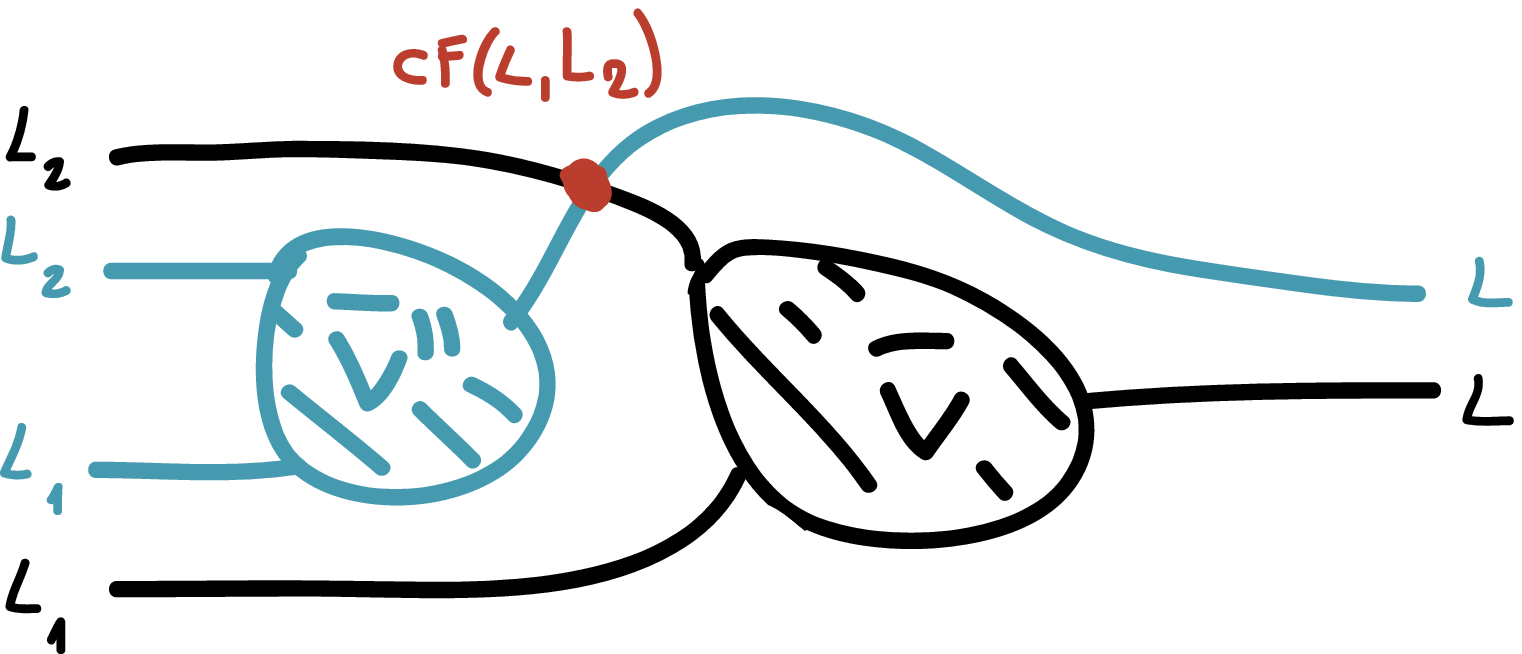, width=0.60\linewidth}
          \caption{\label{fig:CobTwoEnds2} The cobordism
            $\overline{V}''$ is isotopic to $\overline{V}'$. We have
            $HF(\overline{V}',\overline{V}) \cong
            HF(\overline{V}'',\overline{V})=HF(L,L_{2})$.}
       \end{center}
    \end{figure}
     As for $\xi''$, it is an
    identification $$\xi'':HF(\overline{V}'',\overline{V})=HF(L, L_{2})$$ that
    follows from the fact that $\pi(\overline{V}'')$ and $\pi(\overline{V})$
    intersect in a single point in the region where both $\overline{V}''$
    and $\overline{V}$ are just products between curves in the plane and,
    respectively, $L$ and $L_{2}$.  
         
    We now describe the map $\phi_{V}$. The construction of this map
    is very similar to the construction of the maps $h=
    [\bar{\phi}_{V}^{N}]$ and $m_{i}$ in Theorem
    \ref{cor:exact-tri-explicit}.  We first fix $L'\subset M$
    Hamiltonian isotopic to $L$ and transverse to $L,L_{1}, L_{2}$.
    We consider $\tilde{L}'=\lambda_{a, 3/2, k+1, 0}\times L'$ - see
    Figure \ref{fig:graph}.  Then, for appropriate almost complex
    structures, as in Lemma~\ref{cor:Floer-structure2}, the Floer
    complex $CF(\tilde{L}', \overline{V};\mathbf{J})$ is well defined
    and has the form: $$CF(\tilde{L}', \overline{V}; \mathbf{J}) =
    CF(L', L_{2};\mathbf{J})\oplus CF(L', L ; \mathbf{J})$$ for some
    $l_1, l_2 \in \Z$ and differential
    $$
    D =
    \begin{pmatrix}   d_{1} & \tilde{\phi}_{V} \\
       0 & d_{2}
    \end{pmatrix}~,~$$ where $d_1$ and $d_2$ are, up to sign, the
    Floer differentials of $CF(L',L_{2})$ and $CF(L', L )$
    respectively. See Figure~\ref{fig:simple-phi}.
   \begin{figure}[htbp]
      \begin{center}
         \epsfig{file=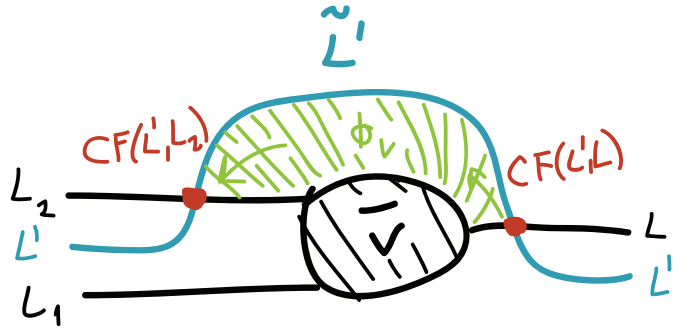, width=0.55\linewidth}
         \caption{\label{fig:simple-phi} The cobordisms $\overline{V}$
           and $\tilde{L}'$. The map $\phi_{V}$ counts the strips in
           {\color{green} green}.}
       \end{center}
    \end{figure}
  
    In the graded case there are some suspensions in the expression
    above - as in Lemma~\ref{cor:Floer-structure2} - but we neglect
    them here. Similarly, there are certain signs in the matrix above
    that we again neglect as we work over $\Z_{2}$. What matters here
    is that the upper left component of $D$ is a chain map
    $\tilde{\phi}_{V}: CF(L',L)\to CF(L',L_{2})$. We put
    $\phi_{V}=[\tilde{\phi}_{V}]$.  We notice that as in the proof of
    Theorem \ref{cor:exact-tri-explicit} this map is uniquely defined
    up to chain homotopy and multiplication by a an element $
    T^{a}\in\mathcal{A}$. In geometric terms, this map counts the
    Floer strips that project to the green strips in
    Figure~\ref{fig:simple-phi}.

    The next step is to justify the commutativity of
    Diagram~\ref{eq:inclusion-Floer}.  For this verification we will
    identify geometrically the maps $\phi_{V}$ and $i_{1}$ and will
    relate them to the construction of $PSS'$. The geometric part of
    this argument consists in composing the two isotopic cobordisms
    $\overline{V}'$ and $\overline{V}''$ from the
    Figures~\ref{fig:CobTwoEnds1} and~\ref{fig:CobTwoEnds2} with a
    cobordism of the form $\gamma\times L$ as in the
    Figure~\ref{fig:CobTwoEnds5}.  
    \begin{figure}[htbp]
      \begin{center}
         \epsfig{file=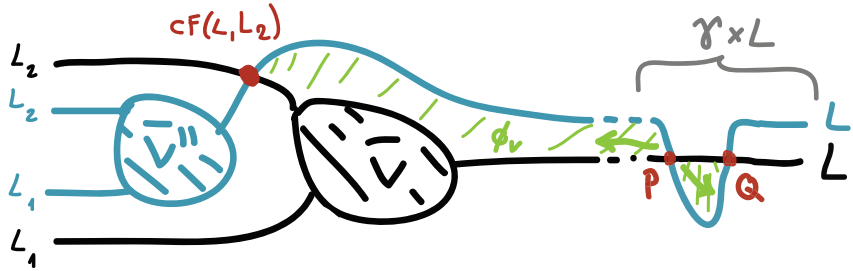, width=0.70\linewidth}
         \caption{\label{fig:CobTwoEnds5} The cobordism
           $\overline{W}''$ obtained by the extension of
           $\overline{V}''$ by $\gamma\times L$ and its intersections
           with $\overline{V}$. We have
           $HF(\overline{V}',\overline{V}) \cong
           CF(\overline{W}'',\overline{V})=(CF(L,L_{2})\oplus
           CF(L,L)[1]\oplus CF(L,L), D)$. In green the two
           non-internal components of $D$: $\phi_{V}$ (to the left)
           and $id_{CF(L,L)}[-1]$ (to the right).}
      \end{center}
   \end{figure}

   To be more precise, assume, without loss of generality, that the
   cylindrical positive end of both $\overline{V}'$ and
   $\overline{V}''$ coincide with $[1,+\infty)\times \{2\}\times L$.
   Assume also that the positive end of $\overline{V}$ coincides with
   $[1,+\infty)\times\{1\}\times L$.  Now take the curve $\gamma$ to
   be the graph of a function $g:[1,+\infty)\to \R$ so that $g$ is
   smooth, $g(t)=2$ for $t\in [1,2]\cup [4,+\infty)$, $g$ attains its
   minimum at the point $3$ with minimal value $g(3)=-1$ and $3$ is
   the single critical point of $g$ in the interval $(2,4)$. The curve
   $\gamma$ intersects (transversely) the curve $y=1$ in two points
   $P=(p,1)$ and $Q=(q,1)$ with $p<q$.  Finally, we put
   $\overline{W}'=(\overline{V}'\cap \pi^{-1}((-\infty, 1]\times
   \R))\cup \gamma \times L$ and similarly $\overline{W}''=
   (\overline{V}''\cap \pi^{-1}(-\infty, 1]\times \R))\cup \gamma
   \times L$.  Certainly, $\overline{W}'$ is horizontally isotopic to
   $\overline{W}''$ (and both are horizontally isotopic with
   $\overline{V'}$ and $\overline{V}''$). We will use the fact that
   the isotopy from $\overline{W}'$ to $\overline{W}''$ may be assumed
   constant on $\pi^{-1}([1,+\infty)\times \R)$.
   
    We use the two cobordisms $\overline{V}$ and $\overline{W}'$ to
    deduce the commutativity of the following diagram:
    \begin{equation}\label{eq:inclusion-Floer2}\xymatrix@-2pt{
        QH(L;\mathcal{A}) \ar[r]^{i_{1}}\ar[d]_{PSS}& 
        QH(V,L_{1};\mathcal{A})\ar[d]^{PSS''}\\
        HF(L,L)\ar[r]_{j}& HF(\overline{W}', \overline{V})}
   \end{equation}
   where $j$ is the map induced in homology by the inclusion of the
   subcomplex of $CF(\overline{W}',\overline{V})$ generated by the
   intersection points of $\overline{W}'$ and $\overline{V}$ that
   project onto $Q$; $PSS''$ is a composition like $\eta\circ
   \overline{PSS}_{L_{1}}$ from Equation (\ref{eq:comp2}) only with
   $\overline{W}'$ instead of $\overline{V}'$.

   We now use the cobordisms $\overline{W}''$ and $\overline{V}$. The
   fact that the horizontal isotopy from $\overline{W}'$ to
   $\overline{W}''$ may be assumed constant
   $\pi^{-1}([1,+\infty)\times \R)$ implies the commutativity of the
   triangle below up to multiplication with a term of the form
   $T^{a}$:
   \begin{equation}\label{eq:inclusion-Floer3}\xymatrix@-1pt{
        HF(L,L)\ar[r]^{j}\ar[rd]_{\phi_{V}}& 
        HF(\overline{W}', \overline{V})\ar[d]^{\xi'}\\
        & HF(\overline{W}'',\overline{V})=HF(L,L_{2}) ~.~
      }
   \end{equation}
   Indeed, with the correct choice of perturbations and almost complex
   structure, the Floer complex $CF(\overline{W}'',\overline{V})$ is
   of the form $(CF(L,L_{2})\oplus CF(L,L)[1]\oplus CF(L,L), D)$ where
   the differential $D$ is just the internal differential on both
   $CF(L,L_{2})$ and $CF(L,L)$ and on $CF(L,L)[1]$ (which is
   represented geometrically by the intersection points of
   $\overline{W}''$ and $\overline{V}$ that project on $P$) it has the
   form $D=d_{L}[1]+\phi_{V}-\textnormal{id}_{CF(L,L)}[-1]$ where
   $d_{L}$ is the differential on $CF(L,L)$.  The choice of isotopy
   shows that $j$ corresponds to the inclusion
   $$CF(L,L)\to CF(L,L_{2})\oplus CF(L,L)[1]\oplus CF(L,L)$$
   and this implies the commutativity of
   Diagram~\eqref{eq:inclusion-Floer3} up to multiplication by
   $T^{a}$.
  
   To summarize what was shown till now, we proved that
   Diagram~(\ref{eq:inclusion-Floer}) commutes and that $PSS$ and
   $PSS'$ are isomorphisms. The next remark is that the morphism
   $\phi_{V}$ is a $QH(L;\mathcal{A})$-module morphism.  This an easy
   verification based on our definition of $\phi_{V}$ that we leave as
   exercise.  As $QH(L)$ is a field this means that either $\phi_{V}$
   is null or it is an injection.  Thus, the same is true for $i_{1}$
   and it is easy to see that a similar argument can be applied to the
   morphism $i_{2}$ from Diagram (\ref{eq:two-exact-q}).  The
   exactness of~\eqref{eq:two-exact-q} together with the duality
   between the $i_{j}$'s and the $\eta_{r}$'s implies that one of the
   $i_{j}$'s has to vanish and the other is injective. We will assume
   that $i_{1}$ is injective and that $i_{2}$ vanishes.  From Lemma
   \ref{lem:alg-inclusion} it is immediate to see that injectivity of
   $i_{1}$ with coefficients in $\mathcal{A}$ implies that the
   corresponding morphism $i_{1}^{\La}:QH(L)\to QH(V, L_{1})$ is also
   injective.  Similarly, the vanishing of $i_{2}$ with coefficients
   in $\mathcal{A}$ also implies the vanishing of $i_{2}$ over $\La$.
   To shorten notation we will not indicate the coefficients in the
   notation for these morphisms $i_{1}$, $i_{2}$, etc as long as there
   is no risk of confusion.
   
   The first claim of the Theorem now follows easily. Indeed $i_{1}$
   (now taken over $\La$) factors: $$ QH(L)\to QH(V)\to QH(V,L_{1})$$
   and thus $QH(L)\to QH(V)$ is injective.  The rank
   inequality~\eqref{eq:rk} follows immediately if we can show that for
   $I_{i}=Im (QH(L_{i})\to QH(V))$ and $I_{0}=Im(QH(L)\to QH(V))$,
   we have $I_{1}\oplus I_{0}\subset I_{2}$ and $QH(L_{1})\to QH(V)$ is
   injective.
 
   To do this we go back to the Diagram~\eqref{eq:two-exact-q} and we
   start by noticing that the vanishing of $i_{2}$ implies that
   $k_{1}$ vanishes. This is seen as follows. First, by an argument
   similar to that applied to $i_{1}$ and $i_{2}$ we see that, over
   $\mathcal{A}$, $k_{1}$ is a $QH(L_{1};\mathcal{A})$-module map.
   Thus it suffices to show that $k_{1}([L_{1}])=0$ ($[L_{1}]$ is the
   fundamental class and is the unit in $QH(L_{1};\mathcal{A})$).
   Secondly, by using explicitly the form of the pearl complexes
   associated to a function $f:V\to \R$ adapted to the exit region
   $L\cup L_{1}$ it is easy to see that $i_{2}([L])=k_{1}([L_{1}])$
   and thus $k_{1}([L_{1}])=0$.  This means that $k_{1}$ vanishes over
   $\mathcal{A}$. But this implies that it also vanishes over $\La$.
   Now $k_{1}$ and $s_{1}$ are dual so the vanishing of $k_{1}$
   implies that of $s_{1}$ which means that $l_{1}:QH(L_{1})\to
   QH(V,L)$ is injective. But this implies that $QH(L_{1})\to QH(V)$
   is injective.
 
   We now show that $I_{0},I_{1}\subset I_{2}$.  This follows from the
   exact sequence:
   $$\to QH(L_{2})\to QH(V)\to QH(V,L_{2})\to $$
   combined with the fact that both maps $k_{1}: QH(L_{1})\to QH(V)\to
   QH(V,L_{2})$ and $i_{2}:QH(L)\to QH(V)\to QH(V,L_{2})$ vanish.

   The last step is to show that $I_{0}\cap I_{1}=\{0\}$. This follows
   form the exact sequence
   $$\to QH(L_{1})\to QH(V)\to QH(V,L_{1})\to$$
   together with the fact that the map $i_{1}:QH(L)\to QH(V)\to
   QH(V,L_{1})$ is injective.
\end{proof}


\section{Examples}\label{sec:examples}

In this section we show Theorem~\ref{thm:examples}. The examples
presented here are based on the Lagrangian surgery construction as
described for instance by Polterovich in~\cite{Po:surgery}.  These
examples contrast with the rigidity results contained in the previous
sections, in particular Theorems~\ref{cor:exact-tri-explicit}
and~\ref{thm:quantum_h}.

\subsection{The trace of surgery as Lagrangian cobordism}
\label{subsec:surgery-trace}The purpose here is to show that the trace
of a Lagrangian surgery gives rise to a Lagrangian cobordism.  As we
shall see, this is a bit less obvious than one might first
expect because Lagrangian cobordism is less flexible than Lagrangian
isotopy.

We start with the local picture and fix the following two Lagrangians:
$L_{1}=\R^{n}\subset \C^{n}$ and $L_{2}= i\R^{n}\subset \C^{n}$.

We define a particular curve $H\subset \C$, $H(t)=a(t)+ ib(t)$,
$t\in\R$, with the following properties (see also Figure
\ref{fig:handle}):
\begin{itemize}
  \item[i.] $H$ is smooth.
  \item[ii.] $(a(t), b(t))=(t,0)$ for $t\in(-\infty,-1]$.
  \item[iii.] $(a(t), b(t))=(0, t)$ for $t\in [1,+\infty)$.
  \item[iv.] $a'(t), b'(t)>0$ for $t\in (-1,1)$.
\end{itemize}

\begin{figure}[htbp]
   \begin{center}
      \epsfig{file=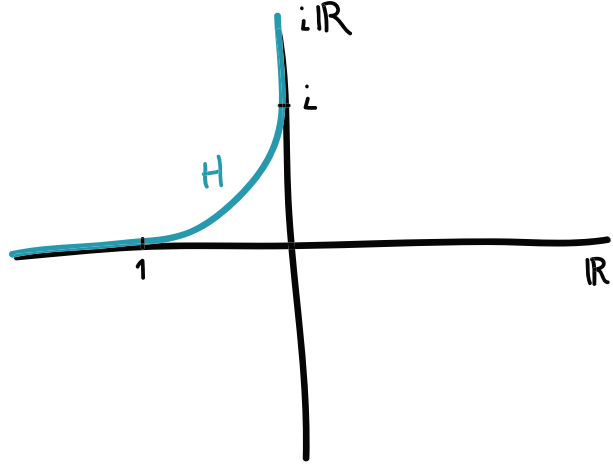, width=0.50\linewidth}
   \end{center}
   \caption{\label{fig:handle} The curve $H\subset \C$.}
\end{figure}
  
Consider $L = H\cdot S^{n-1}\subset \C^{n}$ or more explicitly $$L =
\Bigl\{\bigl((a(t)+ib(t))x_{1}, \ldots, (a(t)+ib(t))x_{n}\bigr) \mid t
\in \mathbb{R}, \, \sum x_{i}^{2}=1\Bigr\}\subset \C^{n}~.~$$
 
\begin{lem}\label{lem:surgery-trace}
   The submanifold $L\subset \C^{n}$ as defined above is Lagrangian
   and there is a Lagrangian cobordism $L \cobto
   (L_{1},L_{2})$.
\end{lem}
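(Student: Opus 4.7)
The lemma has two parts: proving $L$ is Lagrangian and constructing a cobordism $V \colon L \cobto (L_{1}, L_{2})$. My plan is to handle them in sequence.

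For the first part, I will parametrize $L$ by $\sigma \colon \mathbb{R} \times S^{n-1} \to \mathbb{C}^{n}$, $\sigma(t, x) = H(t) \cdot x$. From axioms (i)--(iv), $a$ vanishes only for $t \geq 1$ while $b$ vanishes only for $t \leq -1$, so $H(t) \neq 0$ for all $t$ and a direct check shows $\sigma$ is a smooth embedding with image $L$. The tangent space at $\sigma(t, x)$ is spanned by $\sigma_{t} = H'(t) \cdot x$ and vectors $\sigma_{v} = H(t) \cdot v$ for $v \in T_{x}S^{n-1}$ (so $v \cdot x = 0$). Using the elementary identity $\omega_{\mathbb{C}^{n}}(\zeta u, \zeta' u') = \mathrm{Im}(\bar{\zeta}\zeta')(u \cdot u')$ for $\zeta, \zeta' \in \mathbb{C}$ and $u, u' \in \mathbb{R}^{n}$, I obtain $\omega(\sigma_{t}, \sigma_{v}) = \mathrm{Im}(\overline{H'(t)}H(t))(x \cdot v) = 0$ and $\omega(\sigma_{v_{1}}, \sigma_{v_{2}}) = \mathrm{Im}(|H(t)|^{2})(v_{1} \cdot v_{2}) = 0$, so $\omega|_{L} \equiv 0$.

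For the cobordism, I will place $L_{1}$ and $L_{2}$ at distinct heights in the $\mathbb{R}^{2}$-factor and interpolate via a handle modeled on $H$. The target is $V \subset \mathbb{R}^{2} \times \mathbb{C}^{n}$ with positive cylindrical end $[R, \infty) \times \{1\} \times L$ and negative cylindrical ends $(-\infty, -R] \times \{1\} \times L_{1}$ and $(-\infty, -R] \times \{2\} \times L_{2}$. Outside its compact core, $V$ will be parametrized by $(s, t, x) \in \mathbb{R}^{2} \times S^{n-1}$ via
\[
(s, t, x) \longmapsto \bigl(\Gamma(s, t),\, \Psi(s, t) \cdot x\bigr),
\]
with $\Gamma \colon \mathbb{R}^{2} \to \mathbb{R}^{2}$ and $\Psi \colon \mathbb{R}^{2} \to \mathbb{C} \setminus \{0\}$. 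By the same tangent-space calculation as in part one, the spherical directions contribute zero (using $\sum x_{i}^{2} = 1$), and the Lagrangian condition reduces to the single scalar PDE
\[
\det\bigl(\partial \Gamma / \partial (s, t)\bigr) + \mathrm{Im}\bigl(\overline{\partial_{s} \Psi} \cdot \partial_{t} \Psi\bigr) = 0.
\]
A natural ansatz for $\Psi$ is $\Psi(s, t) = \varepsilon(s) H\bigl(t / \varepsilon(s)\bigr)$, where $\varepsilon \colon \mathbb{R} \to (0, \infty)$ satisfies $\varepsilon(s) = 1$ for $s \geq R$ and $\varepsilon(s) \to 0$ as $s \to -\infty$; this shrinks the handle of $H$ as $s$ decreases and flattens $\Psi$ asymptotically onto $\mathbb{R} \cup i\mathbb{R}$.

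The main obstacle will be the topological transition: the projection of $V$ to $\mathbb{R}^{2}$ must change from a single horizontal ray at height $1$ (for $s \gg 0$) to two disjoint horizontal rays at heights $1$ and $2$ (for $s \ll 0$). I will need $\Gamma$ whose vertical coordinate splits smoothly in $s$, separating into two branches asymptoting to the two negative heights, while satisfying the scalar PDE above. Following Polterovich's handle construction~\cite{Po:surgery}, I would take $\Gamma(s, t) = \bigl(s,\, 1 + \chi(s, t)\bigr)$ where $\chi$ vanishes for $s \geq R$, equals $0$ for $t \ll 0$ at $s \leq -R$, and equals $1$ for $t \gg 0$ at $s \leq -R$. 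The detailed shape of $\chi$ between these regimes is forced by the PDE so as to cancel $\mathrm{Im}(\overline{\partial_{s} \Psi} \cdot \partial_{t} \Psi)$; selecting $\varepsilon$ and $\chi$ compatibly produces a valid solution, and a direct inspection of the resulting formulas then verifies that $V$ is smooth, embedded, Lagrangian, and cylindrical over the prescribed Lagrangians at both ends.
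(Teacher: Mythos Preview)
Your verification that $L$ is Lagrangian is correct and makes explicit the ``straightforward calculation'' the paper only cites.

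The cobordism construction, however, has a genuine gap. Your ansatz parametrizes (the non-compact part of) $V$ by $(s,t,x)\in\mathbb{R}^2\times S^{n-1}$ via $(s,t,x)\mapsto(\Gamma(s,t),\Psi(s,t)\cdot x)$, but this form cannot produce the required cylindrical negative ends. For $V$ to be a cobordism $L\cobto(L_1,L_2)$ one needs $V|_{s\leq -R}=(-\infty,-R]\times\{1\}\times L_1\;\sqcup\;(-\infty,-R]\times\{2\}\times L_2$. Fixing any $s\leq -R$, the slice of your parametrization in the $\mathbb{C}^n$-factor is $\{\Psi(s,t)x:t\in\mathbb{R},\,x\in S^{n-1}\}$, a family of round spheres of radius $|\Psi(s,t)|>0$; hence the full slice is connected and never contains $0\in\mathbb{C}^n$. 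But the target slice $\{1\}\times L_1\sqcup\{2\}\times L_2$ is disconnected, and each $L_j\cong\mathbb{R}^n$ contains the origin. No choice of $\Gamma$, $\Psi$, $\varepsilon$ or $\chi$ repairs this: it is a topological obstruction (indeed the cobordism one is after is contractible, while $\mathbb{R}^2\times S^{n-1}\simeq S^{n-1}$ is not, so no global parametrization of your shape can exist for $n\geq 2$). Your PDE for $\chi$ does not address this, and the sentence ``a direct inspection of the resulting formulas then verifies\ldots'' is exactly where the argument would fail.

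The paper's approach is entirely different and much cleaner: one goes up a dimension, forming $\widehat{H}=H\cdot S^n\subset\mathbb{C}^{n+1}$ (Lagrangian by the very calculation you gave), and uses the first coordinate $\pi(z_1,\ldots,z_{n+1})=z_1$ as the projection to $\mathbb{C}\cong\mathbb{R}^2$. The cobordism is then $W=\widehat{H}\cap\pi^{-1}(S_+)$ for the half-plane $S_+=\{y\geq x\}$. The point is that the fibres of $\pi|_{\widehat{H}}$ automatically give the right Lagrangians: over $(r,0)$ with $r\leq -1$ one finds $L_1$, over $(0,s)$ with $s\geq 1$ one finds $L_2$, and over the origin one finds $L$. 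The extra sphere dimension absorbs precisely the topological transition you were fighting. A short Darboux--Weinstein/Moser argument then straightens the $L$-end to make it cylindrical.
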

 
By a slight abuse of notation (because we omit the handle from the
notation) we will denote $L=L_{1}\# L_{2}$.
  
\begin{proof}
   A straightforward calculation shows that $L \subset \mathbb{C}^n$
   is Lagrangian (see e.g.~\cite{Po:surgery}).
  
   To construct the desired cobordism we now define $$\widehat{H}=
   H\cdot S^{n}\subset \C^{n+1}~.~$$ Or more explicitly
   $$\widehat{H}=\Bigl\{ \bigl( (a(t)+ib(t))x_{1}, \ldots,
   (a(t)+ib(t))x_{n+1} \bigr) \mid t\in \R, \, \sum x_{i}^{2}=1
   \Bigr\}~.~$$ A similar computation as before shows that
   $\widehat{H}$ is also Lagrangian.
 
   We consider the projection \RE{$\pi: \C^{n+1}\to \C$, $\pi(z_{1},\ldots
   z_{n+1})=z_{1}$} and we denote by $\widehat{\pi}$ its restriction
   to $\widehat{H}$:
   $$\widehat{\pi}((a(t)+ib(t))x_{1}, \ldots,
   (a(t)+ib(t))x_{n+1}) = (a(t)+ib(t))x_{1}~.~$$ Define $W =
   \widehat{\pi}^{-1}(S_{+})$ where $S_{+}=\{(x,y)\in \R^{2} \ | \
   y\geq x\}$, see Figure \ref{fig:projection-image}. (As usual, we
   identify $\R^{2}$ with $\C$ under $(x,y)\to x+iy$.) A simple calculation 
   shows that $W$ is a manifold with boundary.
  \begin{figure}[htbp]
     \begin{center}
        \epsfig{file=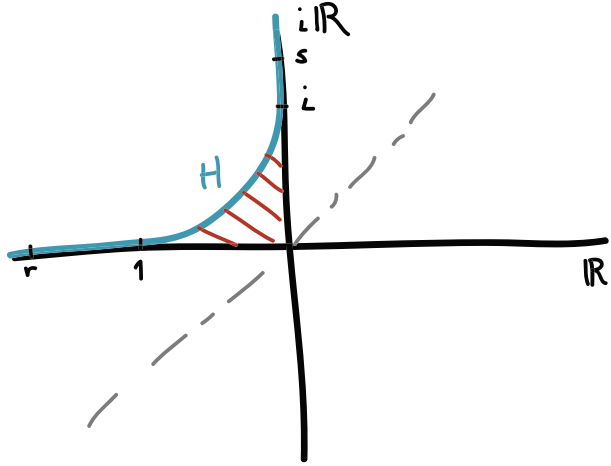, width=0.50\linewidth}
     \end{center}
     \caption{\label{fig:projection-image} The projection of $W$ is
       the red region together with the two semi-axes
       $(-\infty,0]\subset \R$ and $i[0,+\infty)\subset i\R$ and the
       curve $H$.}  
  \end{figure}
  
  Fix some $r < 0$ and notice that if $$\widehat{\pi}((a(t) +
  ib(t))(x_1, \ldots, x_{n+1})) = (r,0)~,~$$ then $b(t)=0$ so that
  $t\leq -1$ and $a(t)=t$. Moreover, $tx_{1}=r$ so that
  $\sum_{i=2}^{n+1} t^{2}x_{i}^2=t^{2}-r^{2}$.  Thus, for $r\leq -1$, we
  have $\widehat{\pi}^{-1}(r,0)=(r,0) \times  L_{1}\subset \C \times \C^{n}$.  
  Similarly, for $s\geq 1$, $\widehat{\pi}^{-1}(0,s)=  (0,s)\times
 L_{2}\subset \C\times \C^{n}$. Also notice that
  $L=\widehat{\pi}^{-1}(0)$.

  We now only look to $W_{0}=W\cap \pi^{-1}([-2,0]\times[0,2])$. It is
  not difficult to see that $W_{0}$ is a manifold with boundary and that 
  $\partial W_{0}=\{(-2,0)\}\times L_{1}\cup \{ (0,2)\}\times L_{2} \cup 
  \{0,0\}\times L$.  We would like to be
  able to say that $W_{0}$ is a cobordism $W_{0}:L\cobto
  (L_{1},L_{2})$. For this however we still need to show that the
  $L$-boundary component of $W_0$ can be continued to be cylindrical.
  We now describe explicitly this adjustment (the argument here is in
  fact quite general).  Let $V_L \subset \C\times \C^{n}$ be the
  Lagrangian given by $V_L = \{(x,y)\in \C : x=-y\}\times L$. 

  Put $L^0 =\{(0,0)\}\times L $. Note that $V_L \cap \pi^{-1}((0,0))
  = \widehat{H} \cap \pi^{-1}((0,0)) = L^0$. Fix a small neighborhood
  $U(L^0) \subset \widehat{H}$ of $L^0 \subset \widehat{H}$ and a
  Darboux-Weinstein neighborhood $\mathcal{N} \subset
  \mathbb{C}^{n+1}$ of $U(L^0)$ and identify symplectically
  $\mathcal{N}$ with a tubular neighborhood of $U(L^0)$ in
  $T^*U(L^0)$. Write $p: \mathcal{N} \to U(L^0)$ for the projection
  corresponding via this identification to the projection in the
  cotangent bundle $T^* U(L^0) \to U(L^0)$.

  Note that at each point of $L^0$, $V_L$ projects 1-1 on the tangent
  space of $\widehat{H}$ (via $p$). Thus reducing $U(L^0)$ if
  necessary we can write $V_L \cap \mathcal{N}$ as the graph of a
  $1$-form $\alpha$ on $U(L^0)$ that vanishes on $L^0$. Since $V_L$ is
  Lagrangian the form $\alpha$ is closed. As $U(L^0)$ can be chosen so
  that it contracts to $L^0$, we have $H^{1}(U(L^0), L^0)=0$ hence
  $\alpha$ is exact.  Let $f:U(L^0)\to \R$ be so that $\alpha=df$.
  Using a partition of unity construct $g:W\cup U(L^0)\to \R$ so that
  it agrees with $f$ on $U(L^0) \setminus W$ and vanishes outside a
  neighborhood of $U(L^0)$. Then the Lagrangian $W'$ obtained by
  isotoping $W_{0}$ by the time-one Hamiltonian diffeomorphism induced
  by $X_{g}$ provides the cobordism desired between $L$ and
  $(L_{1},L_{2})$ - see also Figure \ref{fig:surgery-trace}.
\end{proof}  
 \begin{figure}[htbp]
     \begin{center}
        \epsfig{file=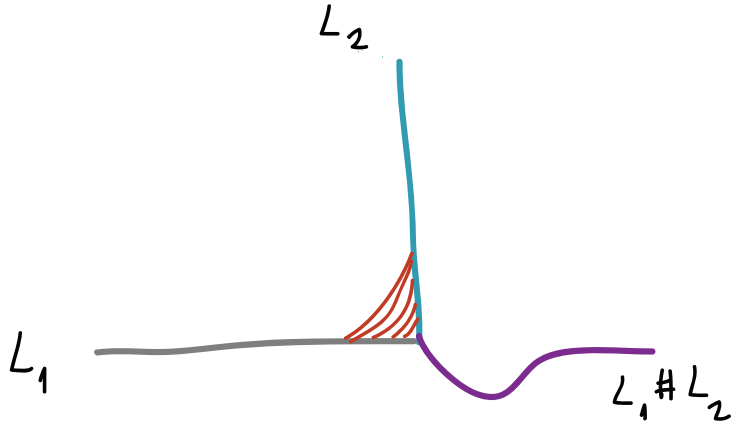, width=0.60\linewidth}
     \end{center}
     \caption{\label{fig:surgery-trace} The trace of the surgery after
       projection on the plane.}
  \end{figure}

\begin{rem}\label{rem:surgery-cob}
   \begin{itemize}
     \item[i.] For further use, we notice that the homotopy type of
      the total space of the cobordism $W':L \cobto L_1 \# L_2$
      constructed above coincides with that of the 
      \RE{topological subspace of $M$ consisting of the union  $L_{1}\cup
      L_{2}$}.
     \item[ii.] The construction described here also provides a
      cobordism $W'':(L_{2},L_{1})\to L_{2}\# L_{1}$ by simply using
      instead of $W$ the region $\widehat{\pi}^{-1}(S_{-})$ with
      $$S_{-}=\{(x,y)\in \R^{2} \ : \ y\leq x\}~.~$$
         \end{itemize}
\end{rem}

Going from the local argument above to a global one is easy. Suppose
that we have two Lagrangians $L'$ and $L''$ that intersect
transversely, possibly in more than a single point. At {\em each}
intersection point we fix symplectic coordinates mapping (locally)
$L'$ to $\R^{n}\subset \C^{n}$ and mapping (again locally) $L''$ to
$i\R^{n}\subset \C^{n}$.  We then apply the construction above at each
of these intersection points.  This produces a new Lagrangian
submanifold $L'\tilde{\#}L''$ as well as a cobordism
$L'\tilde{\#}L''\cobto (L',L'')$ (we use $\tilde{\#}$ in the notation
as $L'\tilde{\#}L''$ is topologically not a connected sum if there are
several intersection points).  The homotopy type of $V$ coincides with
that of the set $L'\cup L'' \subset M$.

\begin{rem} \label{r:multiple-surgery} a. The construction above can
   be used to produce examples of monotone cobordisms of the type $V:L
   \cobto (L_{1},L_{2})$. Indeed assume that $L_{1}$ and $L_{2}$ are
   uniformly monotone and intersect in a single point (for instance
   the longitude and latitude on a torus), then, as a consequence of a
   simple application of the Seifert Van Kampen Theorem, we have that
   $L = L_{1}\# L_{2}$ as well as the cobordism relating it to
   $(L_{1}, L_{2})$ are also monotone with the same constants.  The
   construction can be easily iterated to produce monotone cobordisms
   with arbitrarily many ends.

   b. One can easily generalize the previous construction to a
   configuration of Lagrangian submanifolds $(L_1, \ldots, L_r)$ and
   the total surgery $L$ of the Lagrangians in the configuration. The
   result will be a Lagrangian cobordism $V: L \cobto (L_1, \ldots,
   L_r)$ with one positive end and $r$ negative ends. Of course,
   monotonicity will in general not be preserved in this case.
   However, if the intersection diagram of the configuration is a
   tree, and if $(L_1, \ldots, L_r)$ are uniformly monotone, then \RE{
     the Lagrangian $L$ and the cobordism $V$ will be monotone too}.
   An interesting example is when $(L_1, \ldots, L_r)$ is a
   configuration of Lagrangian spheres corresponding to a simple
   singularity. The relation between singularity theory and Fukaya
   categories has been extensively studied in recent years (see
   e.g.~\cite{Se:book-fukaya-categ}).  Thus the constructions above
   (together with Theorem~\ref{cor:exact-tri-explicit}) suggests that
   the cobordism category is relevant in this study.
\end{rem}

\subsection{Cobordant Lagrangians that are not isotopic}
\label{subsec:cob-lagr-non-iso}

In this subsection we will make use of the constructions described
in~\S\ref{subsec:surgery-trace} to prove Theorem~\ref{thm:examples}
and thus construct an example of monotone cobordant connected
Lagrangians that are not smoothly isotopic. We emphasize that while
the two Lagrangians at the ends of the cobordism are exact, the
minimal Maslov number of the cobordism $V$ between them is $N_{V}=1$.
We will also see that the two Lagrangians in question cannot be
related by a monotone cobordism with minimal Maslov
$\mu_{\textnormal{min}} \geq 2$. A variety of other examples can be
constructed following the same ideas.

We will start our construction in the ambient manifold $M = \C$. We
consider two circles $A=\{z\in \C \ :\ |z+1/2|=1\}$ and $B=\{x\in \C\
: \ |z-1/2|=1\}$. We denote by $D(A)$ and $D(B)$ the two disks bounded
by $A$ and $B$ respectively.  We also consider two smooth curves in
the plane $\C$, $\gamma_{1}:[-1,1]\to \C$ and $\gamma_{2}: [-1,1]\to
\C$ so that - see Figure \ref{fig:plane-surgery1}:
\begin{itemize}
  \item[i.] $\gamma_{1}(t)=t$ for $t\in [-1,-1/2]$
  \item[ii.] $\gamma_{1}(t)=1+(1-t)i$ for $t\in [1/2,1]$
  \item[iii.] $Re(\gamma_{1}(t))$ is strictly increasing for $t\in
   (-1/2,1/2-\epsilon)$. $Im(\gamma_{1}(t))$ is strictly increasing
   for $t\in (-1/2,1/2-\epsilon)$ and strictly decreasing for $t\in
   (1/2-\epsilon, 1/2)$.
  \item[iv.] $\gamma_{2}(t)=-\gamma_{1}(t)$ for all $t\in [-1,1]$.
\end{itemize}

\begin{figure}[htbp]
   \begin{center}
      \epsfig{file=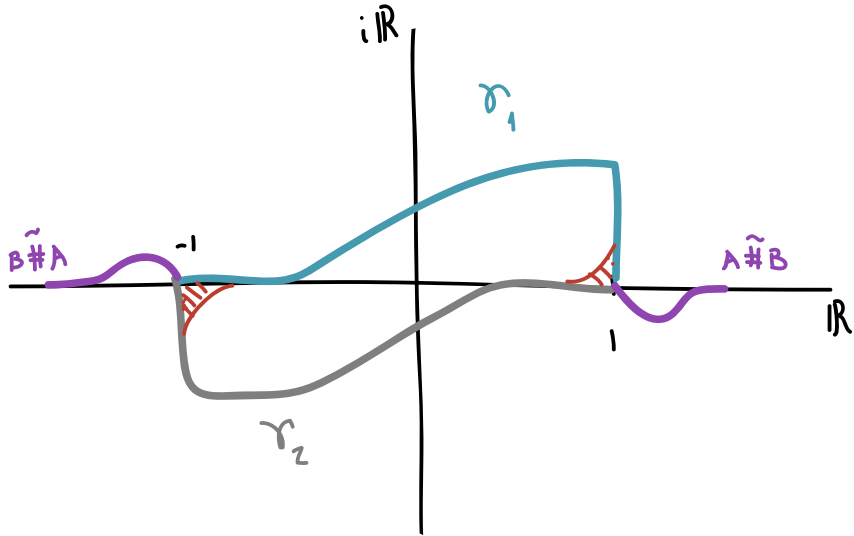, width=0.70\linewidth}
   \end{center}
   \caption{\label{fig:plane-surgery1} The projection of $V$ on $\C$;
     in {\color{red} red} the surgery regions; the curves $\gamma_{1}$
     (in blue) and $\gamma_{2}$ in gray.}
\end{figure}
We now consider the Lagrangians $A'=\gamma_{2}\times A \subset
\mathbb{C} \times M$ and $B'=\gamma_{1}\times B \subset \C\times M$.
By performing surgery - as explained in \S\ref{subsec:surgery-trace} -
at both intersection points $A\cap B$ we can extend the union of the
two Lagrangians $A'\cup B'$ towards the positive end as well as
towards the negative end as in the Figure \ref{fig:plane-surgery1}
thus obtaining a cobordism $V:A\tilde{\#} B\cobto B\tilde{\#} A$.

\begin{figure}[htbp]
   \begin{center}
      \epsfig{file=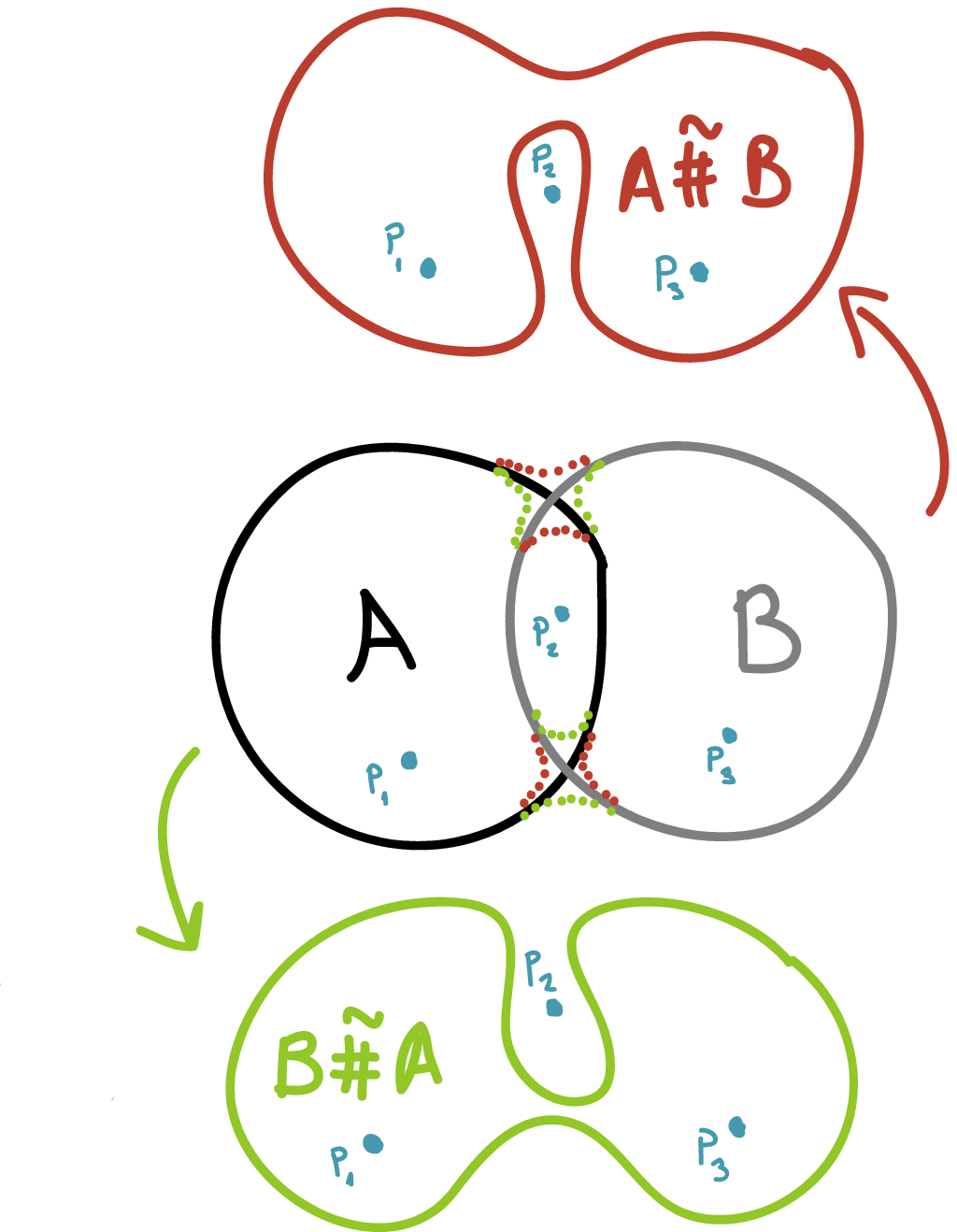, width=0.40 \linewidth}
   \end{center}
   \caption{\label{fig:surgered-circles} The two circles $A$ and $B$
     as well as {\color{red} $A\tilde{\#} B$} and {\color{green}
       $B\tilde{\#} A$}. The three puncture points are indicated in
     blue.}
\end{figure}

Put $L=A\tilde{\#}B$ and $L'=B\tilde{\#}A$.  With our choice of
handles it is easy to see that $L$ and $L'$ look as in
Figure~\ref{fig:surgered-circles}.  Moreover, if the surgeries used in
both intersection points of $A$ and $B$ use the same handle $H$, then
the area inside both circles is precisely $D(A)+D(B)-Area(D(A)\cap
D(B))$ (the two handles can also be picked differently and this can
modify the areas bounded by these two circles, thus producing a -
non-monotone - cobordism relating non-Hamiltonian isotopic, connected
Lagrangians).

It is easy to see that $V$ as constructed before is not orientable.
Moreover, $V$ is also not monotone. However, it is possible to alter
the ambient manifold and make the cobordism monotone in the following
way. Instead of performing all the construction above in $M=\C$ we can
as well do it in $M'=\C\backslash\{P_{1},P_{2},P_{3}\}$ where the
three points $P_{i}$ are such that $P_{1}\in D(A)\backslash D(B)$,
$P_{2}\in D(A)\cap D(B)$ and $P_{3}\in D(B)\backslash D(A)$ as in
Figure~\ref{fig:surgered-circles}.  We will explicitly check
monotonicity below.  We notice for now that in $M'$, $L$ and $L'$ are
not even smoothly isotopic.

To verify that $V$ is monotone in $\C\times M'$ we write $V= V_{+}\cup
V_{-}$, where $V_{+}=V\cap \pi^{-1}([0,+\infty)\times\R)$ and
$V_{-}=V\cap \pi^{-1}((-\infty, 0]\times \R)$. Put
  $\widetilde{M'}_{+} = ([0,+\infty)\times\R)\times M' $,
  $\widetilde{M'}_{-} = ((-\infty, 0]\times\R)\times M'$.  Moreover,
  $V_{+}\cap V_{-}= \{P\}\times A\cup \{Q\}\times B$, where
  $P=\gamma_{2}\cap i\R$ and $Q=\gamma_{1}\cap i\R$. Each of $V_{+}$
  and $V_{-}$ are homotopy equivalent to $A\cup B$.  In particular,
  $H_{2}(\widetilde{M'}_{+},V_{+})=0$ and $H_{2}(\widetilde{M'}_{-},
  V_{-})=0$. This implies that $H_{2}(\C\times M', V)=\Z \oplus \Z$.
  There are two generators for this group each associated to one of
  the intersection points of $A$ and $B$ . Each of them is represented
  by a disk in $\C^{2}$ with boundary on $V$ that is given by a flat
  lift of the planar region bounded by the two curves $\gamma_{1}$ and
  $\gamma_{2}$ and the two planar projections of the handles at the
  ends.  We orient these generators so as to be of positive area and
  we will now verify that they are both of Maslov index $1$.  The
  computation is the same for both generators and we fix just one of
  them, $D_{1}$.  Its boundary is a curve $\gamma: S^{1}\to
  \widetilde{M}=\C\times M$ whose projection onto $\C$ we denote still
  by $\gamma$ (this is, as mentioned before, the union of the two
  curves $\gamma_{1}$ and $\gamma_{2}$ and the two handles at the
  ends).  We look at the tangent space $T_{x}V\subset T\widetilde{M}=
  \C\times \C$ for a point $x=\gamma(t)\in V$.  It is easy to see that
  this tangent space decomposes as:
$$T_{x}V= \R (\dot{\gamma}(t),0) + \R (0,N(t))$$
where $\dot{\gamma}$ is the tangent vector to $\gamma$ and $N(t)\in
\C$.  We need to describe $N(t)$ more explicitly.  For that we pick a
parametrization for $\gamma$ - as described in
Figure~\ref{fig:plane-surgery1} - so that $\gamma: [0,8]\to \C$,
$\gamma(0)=P=\gamma(8)$, $\gamma(4)=Q$, $\gamma(t)=H(t-2)$ for $1\leq
t\leq 3$, $\gamma(t)=-H(t-6)$ for $5\leq t\leq 7$.  With this
parametrization we can write $N(t)=H(t)\in\C$ for $1\leq t\leq 3$;
$N(t)= i H(t)\in \C$ for $5\leq t\leq 7$; $N(t)$ is constant and
tangent to $A$ in the intervals $[0,1]$ and $[7,8]$ (notice however
that $N(8)\not= N(0)$ because $\gamma$ is orientation reversing) and
$N(t)$ is constant and tangent to $B$ in the interval $[3,5]$.  The
horizontal loop $\gamma$ is of Maslov index $2$ and from the formula
above it follows that the contribution of $N(t)$ is $-1$. Thus the
total Maslov index of $D_{1}$ is $1$.

By choosing the handles used for the intersection points appropriately
we may arrange that these two generators have equal areas. Thus $V$ is
monotone of minimal Maslov number $N_V = 1$.

It remains to show that $L$ and $L'$ are not cobordant via a monotone
cobordism with $\mu_{\textnormal{min}} \geq 2$.\footnote{This argument
  was suggested to us by the referee whom we would like to thank.}
Consider a positive ray $R^{+}$ starting at the point $P_{2}$ and
having direction $i\R_+$.  While in the statements in our paper we
always assume that the Lagrangians involved are compact, it is easy to
see that all the techniques involved also apply to $R^{+}$
(alternatively the whole picture can be compactified - see
Remark~\ref{rem:compact-surg}).  Thus the Floer homologies $HF(R^{+},
L)$ and $HF(R^{+}, L')$ are both defined. It is easy to see that they
are not isomorphic as the first does not vanish (it is generated by
the two intersection points between $R^{+}$ and $L$) and the second
vanishes (as $R^{+}\cap L'=0$). However, if $L$ and $L'$ would be
monotone cobordant via a cobordism with $\mu_{\textnormal{min}} \geq
2$, then by Theorem~\ref{cor:exact-tri-explicit} (in the case $k=1$)
these two homologies should be isomorphic.

\begin{rem}\label{rem:compact-surg} 
   It is possible to compactify $M'$ to a closed surface (of high
   genus), while still keeping $V$ monotone. This can be done by
   enlarging the punctures around the points $P_{i}$ and adding
   appropriate handles. The ray $R^+$ will be compactified in this
   picture into a monotone circle.
\end{rem}


\section{Lagrangian cobordism as a category}\label{sec:category}

The aim of this section is to explain how cobordism naturally
organizes the Lagrangian submanifolds of a fixed symplectic manifold
$(M,\omega)$ in a category and to describe a functor relating this
cobordism category to the derived Fukaya category. We will then
interpret Theorems~\ref{cor:exact-tri-explicit},~\ref{thm:quantum_h}
and~\ref{t:2-end-split} from this perspective.  In particular, a non
elementary cobordism $V:L\cobto (L_{1},\ldots L_{k})$ is viewed as a
``splitting'' of $L$ into the ``pieces'' $L_{1},\ldots, L_{k}$.

As mentioned in the introduction an alternative categorical point of
view on Lagrangian cobordism has been independently introduced by
Nadler and Tanaka in \cite{Na-Ta}.

The data is organized in the
following diagram
\begin{equation}\label{eq:main-diag}\xymatrix@-2pt{
     \mathcal{C}ob^{d}_{0}(M)
     \ar[rr]^{\mathcal{F}}\ar[rd]_{\mathcal{\widetilde{\mathcal{F}}}}& & 
     \Sigma D\mathcal{F}uk^{d}(M)\\
     & T^{S}D\fuk^{d}(M)\ar[ru]_{\mathcal{P}} &   }
\end{equation} that will be explained below. 
The proof of the fact that $\widetilde{\mathcal{F}}$ and $\mathcal{F}$
above are functors is postponed to a forthcoming paper.

In the left corner of this Diagram \ref{eq:main-diag} is the category
$Cob^{d}_{0}(M)$ - the cobordism category of $M$ - formally described
in \S\ref{subsec:category-cob}. This is a geometric category with
objects families of Lagrangians $(L_{1},\ldots, L_{k})$, $L_{i}\in
\mathcal{L}^{\ast}_{d}(M)$ (where $\mathcal{L}^{\ast}_{d}(M)$ is a
class of Lagrangians that is also introduced in
\S\ref{subsec:category-cob}).  The morphisms in this category
correspond to (unions) of horizontal isotopy classes of cobordisms
with a single positive end but possibly with many negative ones.

In the right corner in Diagram~\eqref{eq:main-diag},
$\mathcal{F}uk^{d}(M)$, stands for the Fukaya category of $M$ with
objects the Lagrangians in $\mathcal{L}^{\ast}_{d}(M)$.  The Floer
constructions involved in defining the morphisms (and higher
operations) in this $A_{\infty}$-category are with $\Z_{2}$ replacing
$\mathcal{A}$ (as explained in Remark \ref{rem:Floer-restr}; see also
Remark \ref{rem:z2-necessity} on why this change is required).
$D\mathcal{F}uk^{d}(M)$ stands for the resulting derived Fukaya
category of $M$.  The category $D\mathcal{F}uk^{d}(M)$ is triangulated
and $\Sigma D\fuk^{d}(M)$ is the stabilization of $D\fuk^{d}(M)$ in
the sense that the morphisms of $D\fuk^{d}(M)$ are enriched by those
morphisms that shift ``degree'' (see~\S\ref{subsec:cones}).

\begin{rem}\label{rem:split}
   In the construction of $D\fuk^{d}(M)$ we do not complete with
   respect to idempotents (or split factors).
\end{rem}

The category $T^{S}D\fuk^{d}(M)$ is obtained from the category
$D\fuk^{d}(M)$ by a general construction (apparently new) that
associates to any triangulated category $\mathcal{C}$ a new category
$T^{S}\mathcal{C}$ - the category of (stable) triangular (or cone)
resolutions over $\mathcal{C}$. The morphisms sets $\hom(x, -)$ in
this category parametrize the ways in which $x$ can be resolved by
iterated exact triangles (or cone attachments).  We present this
construction below in \S\ref{subsec:cones}. There is a canonical
projection functor $\mathcal{P}:T^{S}\mathcal{C}\to
\Sigma\mathcal{C}$.

In view of the construction of $T^{S}(-)$ the objects in the category
$T^{S}D\fuk^{d}(M)$ are also families $(L_{1},\ldots, L_{k})$,
$L_{i}\in \mathcal{L}^{\ast}_{d}(M)$ and, in fact, the functor
$\widetilde{\mathcal{F}}$ is the identity on objects.  Geometrically,
the existence of $\widetilde{\mathcal{F}}$ is of interest because it
associates to each morphism in $\mathcal{C}ob^{d}_{0}(M)$ and thus, to
each cobordism $V:L\cobto (L_{1},\ldots, L_{k})$, an iterated
decomposition of $L$ by exact triangles in $D\fuk^{d}(M)$ in terms of
the $L_{i}$'s.  In particular, one can deduce a variety of exact
sequences relating the homologies of the ends as well as the higher
structures. By \S\ref{subsec:surgery-trace} this applies, in
particular, to surgery.  This correspondence $$cobordism\
\leftrightarrow \ triangular\ decomposition $$ is reminiscent of the
statement in Theorem \ref{cor:exact-tri-explicit}.  Indeed, as we will
see in \S\ref{subsec:together} where we discuss the relations between
this categorical point of view and our earlier results in the paper,
this theorem is the basic stepping stone for the construction of
$\widetilde{\mathcal{F}}$.

\subsection{The category $\mathcal{C}ob^{d}_{0}(M)$}
\label{subsec:category-cob}
The purpose of this subsection is to set up Lagrangian cobordism as a
category. We first introduce an auxiliary category
$\widetilde{{\mathcal{C}ob}^d}(M)$, $d \in K$. Its objects are
families $(L_{1}, L_{2},\ldots, L_{r})$ with $r \geq 1$, $L_{i}\in
\mathcal{L}_{d}(M)$. (Recall that $\mathcal{L}_d(M)$ stands for
the class of uniformly monotone Lagrangians $L$ with $d_L = d$, and
when $d \neq 0$ with the same monotonicity constant $\rho$ which is
omitted from the notation.)

To describe the morphisms in this category we proceed in two steps.
First, for any two horizontal isotopy classes of cobordisms $[V]$ and
$[U]$ with $V:(L'_{j}) \cobto (L_{i})$ (as in
Definition~\ref{def:Lcobordism}) and $U:(K'_{s})\cobto (K_{r})$ we
define the sum $[V]+[U]$ to be the horizontal isotopy class of a  cobordism 
$W:(L'_{j})+(K'_{s})\cobto (L_{i})+(K_{r})$ so that $W=V \coprod
\widetilde{U}$ with $\widetilde{U}:(K'_{s})\cobto (K_{r})$ a cobordism
horizontally isotopic to $U$ and so that $\widetilde{U}$ is disjoint
from $V$ (to insure embeddedness we can not simply take $V$ and $U$ in
the disjoint union.) Notice that the sum $[V]+[U]$ is not commutative.

The morphisms in $\widetilde{{\mathcal{C}ob}^d}(M)$ are now defined as
follows. A morphism $$[V] \in \mor \bigl( (L'_{j})_{1 \leq j \leq S},
(L_{i})_{1\leq i\leq T} \bigr)$$ is a horizontal isotopy class that
can be written as a sum $[V]=[V_{1}] + \cdots + [V_{S}]$ with each
$V_{j}\in \mathcal{L}_{d}(\C\times M)$ a cobordism from the Lagrangian
family formed by the {\em single} Lagrangian $L'_{j}$ and a subfamily
$(L_{r(j)}, \ldots, L_{r(j)+s(j)})$ of the $(L_{i})$'s, and so that
$r(j)+s(j)+1=r(j+1)$.  In other words, $V$ decomposes as a union of
$V_{i}$'s each with a single positive end but with possibly many
negative ones. We will often denote such a morphism by $V:
(L'_{j})\longrightarrow (L_{i})$.

The composition of morphisms is induced by concatenation followed by a
rescaling to reduce the ``length'' of the cobordism to the interval
$[0,1]$. It is an easy exercise to see that this is well defined
precisely because our morphisms are (horizontal) isotopy classes of
cobordisms and because morphisms are represented by sums of cobordisms
with a single positive end - this is crucial to preserve monotonicity.

We will consider here the void set as a Lagrangian of arbitrary
dimension. We now intend to factor both the objects and the morphisms
in this category by equivalence relations that will transform this
category in a strict monoidal one. For the objects the equivalence
relation is induced by the relations
\begin{equation}
(L, \emptyset) \sim (\emptyset,L) \sim (L).
\label{eq:eq-objects}
\end{equation}  
 
At the level of the morphisms a bit more care is needed. For each $L
\in \mathcal{L}_{d}(M)$ we will define two particular cobordisms
$\Phi_{L}:(\emptyset, L)\cobto (L,\emptyset)$ and $\Psi_{L}:
(L,\emptyset)\cobto (\emptyset,L)$ as follows. Let $\gamma: [0,1] \to
[0,1]$ be an increasing, surjective smooth function, strictly
increasing on $(\epsilon,1-\epsilon)$ and with $\gamma'(t)=0$ for
$t\in [0,\epsilon]\cup [1-\epsilon, 1]$. We now let
$\Phi(L)=graph(\gamma)\times L$ and $\Psi(L)=graph(1-\gamma)\times L$.
The equivalence relation for morphisms is now induced by the following
two identifications:
\begin{enumerate}[(Eq 1)]
  \item For every cobordism $V$ we identify $V+\emptyset \sim
   \emptyset+V \sim V$, where $\emptyset$ is the void cobordism
   between two void Lagrangians. \label{i:eq-1}
  \item If $V:L \longrightarrow (L_{1},..., L_{i}, \emptyset,
   L_{i+2},\ldots, L_{k})$, then we identify $V \sim V' \sim V''$,
   where $V'=\Phi_{L_{i+2}} \circ V$, $V''=\Psi_{L_{i}}\circ V.$
   \label{i:eq-2}
\end{enumerate}

\begin{figure}[htbp]
   \begin{center}
      \epsfig{file=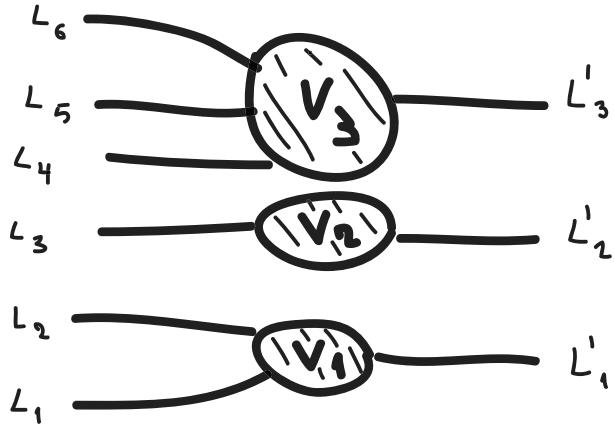, width=0.5\linewidth}
   \end{center}
   \caption{A morphism $V:(L'_{1},L'_{2},L'_{3}) \longrightarrow
     (L_{1}, \ldots, L_{6})$, $V=V_{1}+V_{2}+V_{3}$, projected to
     $\mathbb{R}^2$.}
\end{figure}

We now construct the category $\mathcal{C}ob^{d}(M)$.  First we
consider the full subcategory $\mathcal{S}\subset
\widetilde{\cob}^{d}(M)$ obtained by restricting the objects only to
those families $(L_{1}, \ldots, L_{k})$ with $L_{i}$ non-narrow for
all $1\leq i\leq k$ (this is preferable because our functors
ultimately make use of Floer homology and we need the quantum homology
of each $L_{i}$ to be non trivial).  Then $\mathcal{C}ob^{d}(M)$ is
obtained by the quotient of the objects of $\mathcal{S}$ by the
equivalence relation in~\eqref{eq:eq-objects} and the quotient of the
morphisms of $\mathcal{S}$ by the equivalence relation
in~(Eq~\ref{i:eq-1}), (Eq~\ref{i:eq-2}).

This category is called the {\em Lagrangian cobordism category} of
$M$. As mentioned before, it is a strict monoidal category.  To
recapitulate, its objects are ordered families of Lagrangians $\in
\mathcal{L}_{d}(M)$ and its morphisms:
$$[V]: (L'_{j}) \longrightarrow (L_{i})$$
can be represented by cobordisms $V\in\mathcal{L}_{d}(\R^{2}\times M)$ 
so that all $L_{i}$'s are
non-void and all $L'_{j}$'s are non-void except if there is just a
single $L'_{j}$ which can be void or there is just a single $L_{i}$
which can be void.  Moreover, $V$ can be written as a disjoint union
of cobordisms each with a single positive end.

It turns out that, for the the functorial picture in
Diagram~\eqref{eq:main-diag} to hold, an additional assumption is
required on all the Lagrangians in our constructions, in addition to
the monotonicity conditions discussed in~\S\ref{sb:monotonicity}.
Every Lagrangian $L$ is required to satisfy
\begin{equation}\label{eq:Hlgy-vanishes}
   \textnormal{image\,} \bigl(\pi_{1}(L)\stackrel{i_{\ast}}{\longrightarrow} 
   \pi_{1}(M) \bigr) 
   \ \ \mathrm{is\ torsion},
\end{equation}
where $i_{\ast}$ is induced by the inclusion $L\subset M$.  An
analogous constraint is imposed also to the Lagrangian cobordisms
involved.
\begin{rem}\label{rem:Floer-restr} 
   Assuming the requirement (\ref{eq:Hlgy-vanishes}), an observation
   due to Oh \cite{Oh:HF1} shows that all Floer complexes considered
   earlier in the paper are defined (at the chain level) with
   coefficients in the ``polynomial'' ring $\mathcal{A}^{0}=\Bigl\{
   \sum_{k=0}^{n} a_k T^{\lambda_k} \mid a_k \in K, \; n\in \Z
   \Bigr\}$ (i.e. those elements in $\mathcal{A}$ formed by {\em
     finite} sums). There is an obvious ring map $\mathcal{A}^{0}\to
   \Z_{2}$ obtained by sending $T\to 1$ and this allows to change the
   coefficients in all the structures described by specializing to
   $T=1$.  Clearly, all the results in this paper that have been
   established over $\mathcal{A}$ remain valid when working over
   $\Z_{2}$ using this change of coefficients, assuming of course that
   condition (\ref{eq:Hlgy-vanishes}) is satisfied by all involved
   Lagrangians.
\end{rem}

We denote by $\mathcal{L}^{\ast}_{d}(M)$ the Lagrangians in
$\mathcal{L}_{d}(M)$ that are non-narrow and additionally satisfy
(\ref{eq:Hlgy-vanishes}). There is a subcategory of
$\mathcal{C}ob^{d}(M)$, that will be denoted by
$\mathcal{C}ob^{d}_{0}(M)$, whose objects consist of families of
Lagrangians each one belonging to $\mathcal{L}^{\ast}_{d}(M)$ and
whose morphisms are represented by Lagrangian cobordisms $V$
satisfying the analogous condition to~\eqref{eq:Hlgy-vanishes}, but in
$\mathbb{R}^2 \times M$. This is again a strict monoidal category.

\subsection{Cone decompositions over a triangulated category}
\label{subsec:cones}

In this subsection we will discuss a construction valid in any
triangulated category. The purpose of the construction is to
parametrize the various ways to decompose an object by iterated exact
triangles. 
  
Let $\mathcal{C}$ be a triangulated category. We
recall~\cite{Weibel:book-hom-alg} that this is an additive category
together with a translation automorphism $T:\mathcal{C} \to
\mathcal{C}$ and a class of triangles called {\em exact triangles}
$$X\stackrel{u}{\longrightarrow}Y
\stackrel{v}{\longrightarrow}Z\stackrel{w}{\longrightarrow} TX$$ that
satisfy a number of axioms due to Verdier and to Puppe (see
e.g.~\cite{Weibel:book-hom-alg}).
  
A cone decomposition of length $k$ of an object $A\in \mathcal{C}$ is
a sequence of exact triangles:
$$T^{-1}X_{i}\stackrel{u_{i}}{\longrightarrow}Y_{i}
\stackrel{v_{i}}{\longrightarrow}Y_{i+1}\stackrel{w_{i}}{\longrightarrow}
X_{i}$$ with $1\leq i\leq k$, $Y_{k+1}=A$, $Y_{1}=0$. (Note that $Y_2
= X_1$.) Thus $A$ is obtained in $k$ steps from $Y_{1}=0$.  To such a
cone decomposition we associate the family $l(A)=(X_{1}, X_{2},\dots ,
X_{k})$ and we call it the {\em linearization} of the cone
decomposition. This definition is an abstractization of the familiar
iterated cone construction in case $\mathcal{C}$ is the homotopy
category of chain complexes. In that case $T$ is the shift functor $TX
= X[-1]$ and the cone decomposition simply means that each chain
complex $Y_{i+1}$ is obtained from $Y_i$ as the mapping cone of a
morphism coming from some chain complex, in other words $Y_{i+1} =
\textnormal{cone}(X_{i}[1] \stackrel{u_i}{\longrightarrow} Y_i)$ for
every $i$, and $Y_1=0$, $Y_{k+1}=A$.

We will now define a category $T^{S} \mathcal{C}$. The construction of
this category starts with the {\em stabilization category} of
$\mathcal{C}$, $\Sigma \mathcal{C}$: $\Sigma\mathcal{C}$ has the same
objects as $\mathcal{C}$ and the morphisms in $\Sigma \mathcal{C}$
from $a$ to $b\in \mathcal{O}b(\mathcal{C})$ are morphisms in
$\mathcal{C}$ of the form $a\to T^{s} b$ for some integer $s$.  Next,
the free monoidal isomorphism category $F ^{\ast}\Sigma\mathcal{C}$
over $\Sigma\mathcal{C}$ has as objects finite families
$(x_{1},\ldots, x_{k})$ where the $x_{i}$'s are objects in
$\mathcal{C}$. The monoidal addition, denoted by $+$, is
concatenation. The morphisms are corresponding families of {\em
  isomorphisms} in $\Sigma\mathcal{C}$ (thus this category differs
from the free monoidal category over $\Sigma\mathcal{C}$ because that
category has as morphisms families of morphisms and not only
isomorphisms).

The category $T^{S}\mathcal{C}$, called the {\em category of (stable)
  triangle (or cone) resolutions over} $\mathcal{C}$ is obtained from
$F^{\ast} \Sigma\mathcal{C}$ by enriching the morphisms with the
elements constructed as follows. Given $x \in
\mathcal{O}b(\mathcal{C})$ and $(y_1, \ldots, y_q) \in
\mathcal{O}b(F^*\Sigma \mathcal{C})$ a morphism $\Psi: x
\longrightarrow (y_1, \ldots, y_q)$ is a triple $(\phi, a, \eta)$,
where $a \in \mathcal{O}b(\mathcal{C})$, $\phi: x \to T^s a$ is an
isomorphism for some index $s$ and $\eta$ is \RE{an equivalence class
  - in an obvious way - of a} cone decomposition of the object $a$
with linearization $(T^{s_1}y_1, T^{s_{2}}y_{2},\ldots,
T^{s_{q-1}}y_{q-1}, y_q)$ for some family of indices $s_1, \ldots,
s_{q-1}$. Below we will also sometimes use a shift index $s_{q}$
attached to the last element $y_{q}$ with the understanding that
$s_{q}=0$.  Thus, not only $a$ admits a cone decomposition of length
$q$ but such \RE{an equivalence class of decompositions} is part of
the data defining the morphism $\Psi$.

We now define the morphisms between two general objects in
$\mathcal{O}b(F^{\ast}\Sigma\mathcal{C})$.  A morphism
$$\Phi\in\mor_{T^{S}\mathcal{C}}((x_{1},\ldots x_{m}),
(y_{1},\ldots, y_{n}))$$ is a sum $\Phi =\Psi_{1}+ \cdots + \Psi_{m}$
where $\Psi_{j}\in \mor_{T^{S}\mathcal{C}}(x_{j},
(y_{\alpha(j)},\ldots, y_{\alpha(j)+\nu(j)}))$, and $\alpha(1)=1$,
$\alpha(j+1) = \alpha(j) + \nu(j) + 1$, $\alpha(m) + \nu(m) = n$.

The composition of the morphisms in $T^{S}\mathcal{C}$ is not quite
obvious (it uses the axioms of a triangulated category; it is
described explicitly in \cite{Bi-Co:cob2}).

There is a projection
functor
\begin{equation} \label{eq:proj} \mathcal{P}:T^{S}\mathcal{C}
   \longrightarrow \Sigma\mathcal{C}
\end{equation}
that is defined by $\mathcal{P}(x_{1},\ldots x_{k})=x_{k}$ and whose
value on morphisms is induced by associating to
$\Phi\in\mor_{T^{S}\mathcal{C}}(x, (x_{1},\ldots, x_{k}))$,
$\Phi=(\phi,a,\eta)$, the composition:
$$\mathcal{P}(\Phi): x\stackrel{\phi}{\longrightarrow} T^{s}a 
\stackrel{p}{\longrightarrow} T^{s}x_{k}$$ with $p:a \to x_{k}$
defined by the last exact triangle in the cone decomposition $\eta$ of
$a$,
$$ T^{-1}x_{k}\longrightarrow a_{k}
\longrightarrow a\stackrel{p}{\longrightarrow} x_{k}~.~$$ 

\subsection{Putting things together}\label{subsec:together}
With the definitions above we can now describe the functor
$\widetilde{\mathcal{F}}$.

The construction of $\widetilde{\mathcal{F}}$ is very simple at the
level of objects:
$$\mathcal{O}b (\mathcal{C}ob^{d}_{0}(M)) \ni (L_{1},\ldots, L_{k}) 
\stackrel{\widetilde{\mathcal{F}}}{\longmapsto} (L_{1},\ldots,
L_{k})\in \mathcal{O}b (T^{S}D\fuk^{d}(M))~.~$$

To describe the functor $\widetilde{\mathcal{F}}$ on morphisms we
first mention that this will be by definition a monoidal functor so
that it is enough to describe $\widetilde{\mathcal{F}}(\Phi)$ where
$$\Phi\in \mor_{\mathcal{C}ob^{d}_{0}(M)}(L, (L_{1},\ldots, L_{k}))~.~$$ Let
$$V:L \cobto (L_{1},\ldots, L_{k}) \ \textrm{with}\ \
[V]=\Phi~.~$$

The triangulated structure of $D\fuk^{d}(M)$ is induced from an
$A_{\infty}$- triangulated completion $\mathcal{F}uk^{d}(M)^{\wedge}$
of $\fuk^{d}(M)$. As explained in~\cite{Se:book-fukaya-categ} there
are multiple such completions but all are equivalent for our purposes.
The precise version that we use here (see Remark 3.21
in~\cite{Se:book-fukaya-categ}) is obtained by first using the Yoneda
embedding to view $\mathcal{F}uk^{d}(M)$ as a functor category over
itself with values into chain complexes and then making use of the
usual cone construction at the level of chain complexes to build a
triangulated closure of the image of the embedding.  The category
$D\fuk^{d}(M)$, has the same objects as
$\mathcal{F}uk^{d}(M)^{\wedge}$ but its morphisms are obtained by
applying the homology functor to the morphisms in
$\mathcal{F}uk^{d}(M)^{\wedge}$.

By rendering explicit the definitions of the various categories
involved we see that to construct $\widetilde{\mathcal{F}}(\Phi)$ we
need to associate to each $N\in \mathcal{L}_{d}^{\ast}(M)$ a sequence
of chain complexes $Z^{N}_{i}$, $1 \leq i \leq k+1$, with $Z_1^N = 0$,
and chain morphisms $u_i: CF(N,L_i) \longrightarrow Z_i^N$ so that
\begin{equation} \label{eq:FL-exact-tr} Z_{i+1}^N = \textnormal{cone}
   (CF(N, L_i) \stackrel{u_i}{\longrightarrow} Z^N_{i}), \;\;
   \forall\, 1 \leq i \leq k,
\end{equation}
as well as a chain homotopy equivalence $\phi_{V}^{N}:CF(N,L)
\longrightarrow Z^{N}_{k+1}$ (we again neglect the grading here).
Moreover, this association is supposed to be functorial in $N$, there
should be a compatibility with all the higher structures of an
$A_{\infty}$-category as well as with the composition of cobordisms.

While these functoriality verifications are postponed to a later
publication we remark that the existence of the exact sequences
in~\eqref{eq:FL-exact-tr} is precisely the statement of Theorem
\ref{cor:exact-tri-explicit} !
\begin{rem}\label{rem:z2-necessity} 
   Working over $\Z_{2}$ instead of $\mathcal{A}$ (and thus the
   requirement~\eqref{eq:Hlgy-vanishes}) is crucial here because the
   maps $u_{i}$, $\phi^{N}_{V}$ above should not depend on any
   additional choices. This is true over $\Z_{2}$ but only true up to
   multiplication with some $T^{a}\in \mathcal{A}$ if working over
   $\mathcal{A}$.
\end{rem}
Finally, from the point of view described here, the
Theorems~\ref{thm:quantum_h} and~\ref{t:2-end-split} can be viewed as
exhibiting algebraic obstructions to the existence of morphisms in
$\mathcal{C}ob^{d}(M)$.


\bibliography{bibliography}

%
%
%

\end{document}